\documentclass[11pt]{article}

\usepackage[T1]{fontenc}
\usepackage{lmodern}
\usepackage{amsmath,amssymb,amsthm,mathtools}
\usepackage{enumitem}
\usepackage{microtype}
\usepackage{cite}
\usepackage[hidelinks]{hyperref}
\usepackage[a4paper,margin=1in]{geometry}

\newtheorem{theorem}{Theorem}[section]
\newtheorem{proposition}[theorem]{Proposition}
\newtheorem{lemma}[theorem]{Lemma}
\newtheorem{corollary}[theorem]{Corollary}
\theoremstyle{definition}
\newtheorem{definition}[theorem]{Definition}
\newtheorem{example}[theorem]{Example}
\theoremstyle{remark}

\newcommand{\R}{\mathbb{R}}

\title{\textbf{Super-\(g\)-H\"older Regularity in Stieltjes Dynamics:\\
Atomic Structure, Exact Extremal Values, and Recovery of the Stieltjes Clock}}
\author{%
\large Serkan \.{I}lter$^{1,*}$ \qquad
\large H\"ulya Duru$^{2}$ \qquad
\large Seyit Koca$^{3,4}$\\[7pt]
\normalsize $^{1,2}$Department of Mathematics, Faculty of Science, Istanbul University, Istanbul, T\"urkiye\\
\normalsize $^{3}$Department of Management Information Systems, Istinye University, Istanbul, T\"urkiye\\
\normalsize $^{4}$Institute of Science, Istanbul University, Istanbul, T\"urkiye\\
\\[5pt]
\small \texttt{ilters@istanbul.edu.tr} \qquad
\texttt{hduru@istanbul.edu.tr}\\
\small \texttt{seyit.koca@istinye.edu.tr}\\[5pt]
\footnotesize $^{*}$Corresponding author: Serkan \.{I}lter, \texttt{ilters@istanbul.edu.tr}%
}
\date{}

\begin{document}
\maketitle

\begin{abstract}
Unlike ordinary time, a Stieltjes clock may advance continuously, remain constant over intervals, or jump. For an ordinary continuous clock, H\"older regularity with exponent \(\alpha>1\) forces constancy, whereas jumps of a Stieltjes clock may allow nonconstant behavior. We refer to H\"older regularity of exponent \(\alpha>1\) measured relative to \(g\) as super-\(g\)-H\"older regularity, and show that, in finite dimensions, such functions admit an atomic representation determined entirely by their jumps, with their \(g\)-H\"older seminorms given exactly by the corresponding normalized jump sizes.

This atomic representation yields a linear isometric description of the associated function space, together with its Banach and separability properties, an exact total-variation formula with an optimal bound, and finite-jump approximation results.

We then apply this structure to Stieltjes differential equations. Nonzero state jumps require positive clock jumps, leading to quantitative finite-jump bounds and, for affine dynamics, exact thresholds for the number of jumps and extremal terminal distances under a prescribed total Stieltjes mass. Finally, we study recovery of the Stieltjes clock from observed state jumps. For known single-valued dynamics, clock jumps can be recovered under a natural local identifiability condition, while for nonconvex differential inclusions the super-\(g\)-H\"older bound can reduce, and in some cases remove, nonuniqueness.
\end{abstract}

\noindent\textbf{Keywords:} Stieltjes differential equations; super-\(g\)-H\"older regularity; Stieltjes derivator; bounded variation; affine dynamics; inverse problems; differential inclusions.

\medskip
\noindent\textbf{2020 Mathematics Subject Classification:} 26A16, 46B45, 34N05, 34A55.

\section{Introduction}

A system need not experience time as a smooth and uninterrupted flow. It may evolve continuously, remain unchanged for a period, or change abruptly. Stieltjes calculus represents these possibilities through a nondecreasing clock \(g\), which may advance continuously, stay constant on intervals, or jump. This raises a basic question: how does regularity change when increments are measured by \(g\) rather than by ordinary elapsed time?

The distinction becomes decisive for H\"older exponents greater than one. In ordinary continuous time, a H\"older function with exponent \(\alpha>1\) on an interval must be constant. A Stieltjes clock changes this conclusion because a jump cannot be divided into arbitrarily small clock increments. Consequently, nonconstant \(g\)-H\"older behavior with exponent \(\alpha>1\) can occur only through the discontinuous part of the clock.

Regularity with respect to a Stieltjes derivator has been studied through \(g\)-continuity, \(g\)-absolute continuity, Stieltjes differentiation, compactness properties, chain rules, and Stieltjes integral equations \cite{FernandezTojoVillanueva2024Compactness,MarquezAlbesSlavik2024Chain}. Existence and uniqueness results, numerical approximation, systems with stationary periods and jumps, and the kernel of the Stieltjes derivative have also been investigated \cite{LopezPousoMarquez2019Systems,MarquezAlbesTojo2021,LopezPousoMarquez2019Models,FernandezTojo2020,FernandezEtAl2025Kernel}. More recently, Tojo related Stieltjes \(L_g^p\) and Sobolev spaces to classical function spaces through a generalized inverse of the derivator and used this correspondence to connect Stieltjes differential equations with ordinary differential equations \cite{AdrianTojo2025Connection}. For set-valued dynamics, related results include periodic problems, viability and Filippov-type statements, and relaxation for nonconvex differential inclusions \cite{MarraffaSatco2022Periodic,SatcoSmyrlis2023,MarraffaSatco2023,MarraffaSatco2023Infinite}. These results provide the Stieltjes setting in which we examine the exponent range \(\alpha>1\).

Let \(I=[a,b]\subset\R\), with \(a<b\), and let \(g:I\to\R\) be a left-continuous nondecreasing Stieltjes derivator. We study functions \(f:I\to\R^d\) satisfying \(g\)-H\"older regularity with exponent \(\alpha>1\), which we refer to as super-\(g\)-H\"older regularity. Our main question is: how do the continuous and atomic parts of \(g\) determine the structure of this function class, and what consequences does this structure have for Stieltjes dynamics?

The structural result goes beyond the observation that jumps can support nonconstant behavior. For \(\alpha>1\), the nonatomic part of the Stieltjes clock contributes no nonconstant variation: every super-\(g\)-H\"older function admits a pure-jump representation determined by the jumps of \(g\). Moreover, its \(g\)-H\"older seminorm is recovered exactly from the corresponding normalized jump amplitudes. Thus the regularity condition reduces the function to a normalized jump sequence indexed by the jump points of the clock.

Classical bounded-variation results already give precise relations between one-sided jumps and variation; see, in particular, Chistyakov's results for metric-space-valued maps of bounded Jordan variation \cite{Chistyakov1998BV}. The additional super-\(g\)-H\"older assumption with exponent \(\alpha>1\) turns this jump information into an exact description of the function and its \(g\)-H\"older seminorm. This provides the structural input for the dynamical, extremal, and inverse problems studied later.

The atomic representation also determines the function-space structure. After fixing the initial value, the super-\(g\)-H\"older space is linearly isometric to \(\ell^\infty(D_g;\R^d)\), with the normalized jump amplitudes serving as coordinates. Without fixing the initial value, the corresponding space is linearly isometric to \(\R^d\oplus_1\ell^\infty(D_g;\R^d)\). This immediately gives the Banach structure and shows that the space is separable exactly when \(D_g\) is finite. The same representation also yields a sharp bounded-variation estimate, finite-jump approximation, and the compactness results developed in Section~3.

The atomic representation has direct consequences for Stieltjes differential equations in finite-dimensional state spaces. In the super-\(g\)-H\"older regime, the continuous part of the clock cannot support nonzero state variation, so state changes are confined to the jump points of \(g\). If the vector field is uniformly bounded away from zero at these nonzero state jumps, each such jump requires a positive minimum clock increment. A prescribed total Stieltjes mass can therefore support only finitely many nonzero state jumps.

For affine dynamics, the jump constraints become explicit. We obtain exact thresholds for the number of nonzero state jumps compatible with a prescribed total Stieltjes mass. For a fixed number of jumps, we then determine the minimum and maximum terminal distance over the admissible jump sizes. Standard equalization and majorization arguments provide useful comparison points \cite{BoydVandenberghe2004,MarshallOlkinArnold2011}; here the lower bound for each jump size depends on the preceding jumps. The extremal analysis also shows that, under the same total Stieltjes mass, the strongest admissible atomic evolution remains strictly separated from the corresponding continuous-time response.

The atomic structure also leads naturally to an inverse problem: which parts of a Stieltjes clock can be reconstructed from the observed state trajectory? For known single-valued dynamics, every observed nonzero state jump determines the corresponding clock jump whenever the local vector field is nonzero. Stieltjes mass may remain undetected where the dynamics vanish, so complete recovery requires that the vector field not vanish along the observed trajectory. In particular, observations of individual state jumps can determine the atomic clock up to an additive normalization, whereas terminal observations alone may correspond to different Stieltjes clocks. For nonconvex differential inclusions, further nonuniqueness appears because several admissible velocities may generate the same observed jump with different jump sizes. The super-\(g\)-H\"older bound restricts these candidates and can, in some cases, restore uniqueness.

The paper is organized as follows. Section~2 introduces the Stieltjes notation and the super-\(g\)-H\"older setting. Section~3 establishes the atomic representation and the resulting function-space properties. Section~4 applies this structure to Stieltjes differential equations and bounds the number of nonzero state jumps. Sections~5 and~6 treat thresholds for the number of jumps and extremal terminal distances from the equilibrium for affine dynamics. Section~7 studies recovery of a Stieltjes clock for known single-valued dynamics, while Section~8 considers the corresponding problem for nonconvex differential inclusions. Section~9 summarizes the main conclusions.

\section{Preliminaries and the Stieltjes clock}

Let \(I=[a,b]\subset\R\), with \(a<b\). Throughout, let \(g:I\to\R\) be a left-continuous nondecreasing Stieltjes derivator. When emphasizing its role in parametrizing the evolution, we refer to \(g\) as the Stieltjes clock. The Lebesgue--Stieltjes measure generated by \(g\) is denoted by \(\mu_g\). For \(a\le s<t\le b\), we use the convention
\[
\mu_g([s,t))=g(t)-g(s).
\tag{2.1}
\]
For every \(\tau\in[a,b)\), define
\[
\Delta^+g(\tau):=g(\tau+)-g(\tau),
\tag{2.2}
\]
and
\[
D_g:=\{\tau\in[a,b):\Delta^+g(\tau)>0\}.
\tag{2.3}
\]
Whenever a function \(f:I\to\R^d\) has a right limit at such a point \(\tau\), we write
\[
\Delta^+f(\tau):=f(\tau+)-f(\tau).
\]
Then
\[
\mu_g(\{\tau\})=\Delta^+g(\tau).
\tag{2.4}
\]
For \(\tau\in D_g\), we write \(m_\tau:=\Delta^+g(\tau)=\mu_g(\{\tau\})>0\) for the mass of the atom at \(\tau\).

We decompose the Stieltjes measure as
\[
\mu_g=\mu_g^c+\mu_g^a,
\tag{2.5}
\]
where \(\mu_g^c\) is the nonatomic part of \(\mu_g\) and
\(\mu_g^a\) is its purely atomic part. Thus, for every \(t\in I\),
\[
\mu_g^c(\{t\})=0.
\]

Since the atoms of \(\mu_g\) are precisely the jump points of
\(g\), and the mass of the atom at \(\tau\in D_g\) is
\(\Delta^+g(\tau)\), the atomic part is given by
\[
\mu_g^a
=
\sum_{\tau\in D_g}
\Delta^+g(\tau)\,\delta_\tau,
\tag{2.6}
\]
where \(\delta_\tau\) denotes the Dirac measure concentrated at
\(\tau\). Consequently, for every Borel set \(E\subset I\),
\[
\mu_g^a(E)
=
\sum_{\tau\in D_g\cap E}
\Delta^+g(\tau).
\]

Because \(g\) is nondecreasing on the compact interval \(I\), the set
\(D_g\) is at most countable and
\[
\sum_{\tau\in D_g}\Delta^+g(\tau)
\le
g(b)-g(a)
<
\infty.
\]
Hence the series defining \(\mu_g^a\) is well defined.

Here and below, the term ``continuous part'' refers to the nonatomic
component \(\mu_g^c\). It does not imply absolute continuity with
respect to Lebesgue measure; in general, \(\mu_g^c\) may also contain
a singular continuous component.

A function \(f:I\to\R^d\) is said to be \(g\)-continuous at \(t\in I\) if, for every \(\varepsilon>0\), there exists \(\delta>0\) such that \(\|f(s)-f(t)\|<\varepsilon\) whenever \(s\in I\) and \(|g(s)-g(t)|<\delta\). The function \(f\) is \(g\)-continuous on \(I\) if it is \(g\)-continuous at every \(t\in I\).

A function \(f:I\to\R^d\) is \(g\)-absolutely continuous if, for every \(\varepsilon>0\), there exists \(\delta>0\) such that, for every finite family of pairwise disjoint intervals \((s_k,t_k)\subset I\), the condition \(\sum_k|g(t_k)-g(s_k)|<\delta\) implies \(\sum_k\|f(t_k)-f(s_k)\|<\varepsilon\).

For \(f:I\to\R^d\) and \(\alpha>0\), define
\[
[f]_{\alpha,g}:=
\sup_{\substack{s,t\in I\\g(s)\ne g(t)}}
\frac{\|f(t)-f(s)\|}{|g(t)-g(s)|^\alpha},
\tag{2.7}
\]
with the convention that the supremum is zero if the indexing set is empty. We say that \(f\) is \(g\)-H\"older continuous with exponent \(\alpha>0\) if \([f]_{\alpha,g}<\infty\) and \(f(t)=f(s)\) whenever \(s,t\in I\) satisfy \(g(t)=g(s)\). Equivalently, there exists \(L\ge0\) such that \(\|f(t)-f(s)\|\le L|g(t)-g(s)|^\alpha\) for every \(s,t\in I\). In particular,
\[
g(t)=g(s)\quad\Longrightarrow\quad f(t)=f(s).
\tag{2.8}
\]
When \(\alpha>1\), we call this \emph{super-\(g\)-H\"older regularity} and refer to \(f\) as a \emph{super-\(g\)-H\"older function}.

\subsection{Function spaces and auxiliary notation}

Throughout, \(\mathbb N=\{1,2,\ldots\}\) and \(\mathbb N_0=\{0,1,2,\ldots\}\). On Euclidean spaces, \(\|\cdot\|\) denotes the Euclidean norm and \(\langle\cdot,\cdot\rangle\) the Euclidean inner product. For a set \(E\), \(\operatorname{card}(E)\) denotes its cardinality. For a bounded map \(h:I\to\R^d\), we write \(\|h\|_\infty:=\sup_{t\in I}\|h(t)\|\).

For fixed \(\alpha>1\) and \(d\ge1\), let \(\mathcal H_{\alpha,g}(I;\R^d)\) denote the space of all super-\(g\)-H\"older maps \(f:I\to\R^d\) with exponent \(\alpha\), equipped with the norm
\[
\|f\|_{\alpha,g}:=\|f(a)\|+[f]_{\alpha,g},
\]
so that \(\mathcal H_{\alpha,g}(I;\R^d)\) is a normed linear space. We also use the subspace
\[
\mathcal H_{\alpha,g}^{0}(I;\R^d)
:=
\{f\in\mathcal H_{\alpha,g}(I;\R^d):f(a)=0\},
\]
on which \([\cdot]_{\alpha,g}\) is a norm.

We write
\[
\ell^\infty(D_g;\R^d)
=
\left\{
\xi:D_g\to\R^d:
\sup_{\tau\in D_g}\|\xi(\tau)\|<\infty
\right\},
\]
endowed with the norm \(\|\xi\|_{\ell^\infty}=\sup_{\tau\in D_g}\|\xi(\tau)\|\). If \(D_g=\varnothing\), this space is understood as the zero space. We also write \(c_0(D_g;\R^d)\) for the subspace of all \(\xi\in\ell^\infty(D_g;\R^d)\) such that, for every \(\varepsilon>0\), the set \(\{\tau\in D_g:\|\xi(\tau)\|\ge\varepsilon\}\) is finite.

For normed spaces \(X\) and \(Y\), the notation \(X\oplus_1Y\) denotes the product space \(X\times Y\) equipped with the norm \(\|(x,y)\|_{X\oplus_1Y}=\|x\|_X+\|y\|_Y\). Accordingly, \(\R^d\oplus_1\ell^\infty(D_g;\R^d)\) consists of all pairs \((x,\xi)\) with \(x\in\R^d\) and \(\xi\in\ell^\infty(D_g;\R^d)\), endowed with the norm \(\|(x,\xi)\|=\|x\|+\|\xi\|_{\ell^\infty}\). For \(r\in\R\), \(\lfloor r\rfloor\) denotes the greatest integer not exceeding \(r\).

For \(f:I\to\R^d\), where \(I=[a,b]\), the Jordan total variation of \(f\) on \(I\) is
\[
\operatorname{Var}_I(f)
:=
\sup\left\{
\sum_{j=1}^{n}\|f(t_j)-f(t_{j-1})\|:
 n\in\mathbb N,\;
 a=t_0<t_1<\cdots<t_n=b
\right\}.
\]
The supremum is taken over all finite partitions of \(I\). We say that \(f\) is of bounded variation on \(I\) if \(\operatorname{Var}_I(f)<\infty\).

Define also the flat-clock set
\[
C_g:=\{t\in I:\text{$g$ is constant on a neighbourhood of $t$}\}.
\tag{2.9}
\]
It is standard that
\[
\mu_g(C_g)=0.
\tag{2.10}
\]
For \(t\in I\setminus(D_g\cup C_g)\), the Stieltjes derivative is defined, whenever the corresponding limit exists, by
\[
f'_g(t)
=
\lim_{\substack{s\to t\\ g(s)\ne g(t)}}
\frac{f(s)-f(t)}
     {g(s)-g(t)}.
\tag{2.11}
\]

At a jump point \(\tau\in D_g\), equivalently at an atom of \(\mu_g\), the Stieltjes derivative is given by the corresponding jump quotient:
\[
f'_g(\tau)
=
\frac{\Delta^+f(\tau)}
     {\Delta^+g(\tau)}.
\tag{2.12}
\]
This is the standard convention used for left-continuous nondecreasing derivators; see, for example, \cite{FernandezTojo2020}. Since \(C_g\) is \(\mu_g\)-null, no value of the derivative on \(C_g\) is needed for the almost-everywhere statements or the integral representation used below.

For \(g\)-absolutely continuous \(\R^d\)-valued functions, the Stieltjes fundamental theorem gives
\[
f(t)=f(a)+\int_{[a,t)} f'_g(s)\,d\mu_g(s).
\tag{2.13}
\]

\begin{example}[A nonconstant super-\(g\)-H\"older function]
\label{ex:basic-super-holder}
Fix an interior point \(\tau\in I\), let \(\rho>0\), let \(\alpha>1\), and choose \(v\in\R^d\) with \(\|v\|=1\). Define \(g:I\to\R\) and \(f:I\to\R^d\) by
\[
g(t)=\begin{cases}
t,&t\le\tau,\\
t+\rho,&t>\tau,
\end{cases}
\qquad
f(t)=\begin{cases}
0,&t\le\tau,\\
\rho^\alpha v,&t>\tau.
\end{cases}
\tag{2.14}
\]
Then \(g\) is a left-continuous nondecreasing Stieltjes derivator with \(\Delta^+g(\tau)=\rho\), while \(f\) is nonconstant. If \(s\le\tau<t\), then \(\|f(t)-f(s)\|=\rho^\alpha\) and \(g(t)-g(s)=t+\rho-s\ge\rho\), hence \(\|f(t)-f(s)\|\le |g(t)-g(s)|^\alpha\) for all \(s,t\in I\), so \([f]_{\alpha,g}\le1\). Taking \(s=\tau\) and letting \(t\downarrow\tau\) gives ratios converging to \(1\), and therefore \([f]_{\alpha,g}=1\).
\end{example}

Example~\ref{ex:basic-super-holder} shows why the classical constancy argument fails in the presence of a clock jump. Continuous clock increments can be subdivided into arbitrarily small pieces, whereas a positive jump cannot be subdivided in the Stieltjes scale. Section~3 shows that, for \(\alpha>1\), these jumps are the only source of nonconstant behavior.

\section{Atomic representation and function-space structure of super-\(g\)-H\"older functions}

We first show that the nonatomic part of the Stieltjes clock cannot contribute to a super-\(g\)-H\"older function. This leads to a pure-jump representation and an exact formula for the \(g\)-H\"older seminorm. We then use the resulting jump sequence to describe the function space and its approximation properties.

\begin{proposition}
\label{prop:holder-implies-gac}
Let \(f:I\to\R^d\) be \(g\)-H\"older continuous with exponent \(\alpha\ge1\). Then \(f\) is \(g\)-absolutely continuous.
\end{proposition}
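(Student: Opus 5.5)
The plan is to verify the \(\varepsilon\)--\(\delta\) definition of \(g\)-absolute continuity directly from the H\"older bound. The key observation is that for \(\alpha\ge1\) the map \(x\mapsto x^\alpha\) is superadditive on \([0,\infty)\), or at least dominated by a linear function on small arguments. Let \(L:=[f]_{\alpha,g}\). By the definition of \(g\)-H\"older continuity (including (2.8)),
\[
\|f(t)-f(s)\|\le L|g(t)-g(s)|^\alpha\qquad\text{for all } s,t\in I.
\]
If \(L=0\), then \(f\) is constant and there is nothing to prove, so we may assume \(L>0\).

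Fix \(\varepsilon>0\) and take a finite family of pairwise disjoint intervals \((s_k,t_k)\subset I\). Since \(g\) is nondecreasing, the increments \(x_k:=|g(t_k)-g(s_k)|\) are nonnegative. Suppose \(\sum_k x_k<\delta\), with \(\delta\le1\) still to be chosen. Then each \(x_k<1\), so \(x_k^\alpha\le x_k\) because \(\alpha\ge1\). Summing gives
\[
\sum_k\|f(t_k)-f(s_k)\|\le L\sum_k x_k^\alpha\le L\sum_k x_k<L\delta .
\]
Choosing \(\delta:=\min\{1,\varepsilon/L\}\) yields \(\sum_k\|f(t_k)-f(s_k)\|<\varepsilon\), which is exactly \(g\)-absolute continuity.

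Alternatively, one could use \(\sum_k x_k^\alpha\le(\sum_k x_k)^\alpha\), which holds by superadditivity of \(x^\alpha\) for \(\alpha\ge1\), and then set \(\delta=(\varepsilon/L)^{1/\alpha}\). This removes the need for the cap \(\delta\le1\).

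There is no genuine obstacle here. The only point needing care is that the H\"older inequality must hold for every pair \(s,t\), including pairs with \(g(s)=g(t)\). That is guaranteed by the requirement (2.8) in the definition, and it is also why disjointness of the intervals plays no role in the estimate.
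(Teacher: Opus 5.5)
Your proof is correct and is essentially the paper's argument: a direct \(\varepsilon\)--\(\delta\) check in which the smallness of each increment lets you bound \(u_k^\alpha\) by a linear function of \(u_k\). You cap \(\delta\le1\) to get \(x_k^\alpha\le x_k\) uniformly for all \(\alpha\ge1\), whereas the paper uses \(u_k^\alpha\le\delta^{\alpha-1}u_k\) and treats \(\alpha=1\) separately; your superadditivity variant \(\sum_k x_k^\alpha\le(\sum_k x_k)^\alpha\) is the same inequality the paper uses later, in the converse part of Theorem~\ref{thm:pure-jump-characterization}.
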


\begin{proof}
Let \([f]_{\alpha,g}\le L\). For a finite family of pairwise disjoint intervals \((s_k,t_k)\subset I\), put \(u_k=g(t_k)-g(s_k)\ge0\). If \(\alpha=1\), then
\[
\sum_k\|f(t_k)-f(s_k)\|\le L\sum_k u_k.
\]
Choosing \(\delta>0\) so that \(L\delta<\varepsilon\) proves the claim in this case. If \(\alpha>1\) and \(\sum_k u_k<\delta\), then every \(u_k<\delta\) and
\[
\sum_k\|f(t_k)-f(s_k)\|
\le L\sum_k u_k^\alpha
\le L\delta^{\alpha-1}\sum_k u_k
< L\delta^\alpha.
\]
Choosing \(\delta\) so that \(L\delta^\alpha<\varepsilon\) proves the claim.
\end{proof}

\begin{proposition}
\label{prop:continuous-extinction}
Let \(f:I\to\R^d\) be super-\(g\)-H\"older with exponent \(\alpha>1\). Then (3.1) holds for \(\mu_g^c\)-almost every \(t\in I\):
\[
f'_g(t)=0.
\tag{3.1}
\]
\end{proposition}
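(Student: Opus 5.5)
The plan is to prove the stronger pointwise statement: \(f'_g(t)\) exists and equals \(0\) at every \(t\in I\setminus(D_g\cup C_g)\). Since \(\mu_g^c\) has no atoms, \(\mu_g^c(D_g)=0\). Also \(\mu_g^c\le\mu_g\) and \(\mu_g(C_g)=0\) by (2.10), so \(\mu_g^c(C_g)=0\). Hence the exceptional set \(D_g\cup C_g\) is \(\mu_g^c\)-null, and the pointwise statement gives (3.1) for \(\mu_g^c\)-almost every \(t\).

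Fix \(t\in I\setminus(D_g\cup C_g)\) and let \(L=[f]_{\alpha,g}<\infty\). For every \(s\in I\) with \(g(s)\ne g(t)\), the definition (2.7) gives
\[
\frac{\|f(s)-f(t)\|}{|g(s)-g(t)|}\le L\,|g(s)-g(t)|^{\alpha-1}.
\]
It therefore suffices to show that \(g(s)\to g(t)\) as \(s\to t\). Then, because \(\alpha>1\), the right-hand side tends to \(0\), and the limit in (2.11), taken over \(s\to t\) with \(g(s)\ne g(t)\), exists and equals \(0\).

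Continuity of \(g\) at \(t\) follows from the hypotheses. Left-continuity is assumed. Right-continuity holds because \(t\notin D_g\) means \(\Delta^+g(t)=0\) when \(t<b\); at \(t=b\) only left limits are involved. We also need the limit in (2.11) to be taken over a nonempty set of \(s\) accumulating at \(t\). This is where \(t\notin C_g\) is used: \(g\) is not constant on any neighbourhood of \(t\), so every neighbourhood of \(t\) contains points \(s\) with \(g(s)\ne g(t)\), and the limit is not vacuous.

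The main obstacle is this bookkeeping rather than any analysis. It consists of checking that \(D_g\cup C_g\) is \(\mu_g^c\)-null and that the derivative in (2.11) is well defined at the remaining points. The one subtle case is \(t\) at an endpoint, or \(t\) such that \(g\) is constant on one side of \(t\) only. In that case the limit is taken only over the side where \(g(s)\ne g(t)\), and the same estimate applies on that side. Points where \(g\) is flat on one side are not in \(C_g\), but the argument above covers them anyway.
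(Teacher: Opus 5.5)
Your proof is correct. It rests on the same estimate as the paper's proof, namely \(\|f(s)-f(t)\|/|g(s)-g(t)|\le[f]_{\alpha,g}|g(s)-g(t)|^{\alpha-1}\) at points where \(g\) is continuous. It also uses the same observation that \(D_g\cup C_g\) is \(\mu_g^c\)-null.

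The difference is in how you obtain existence of \(f'_g\). The paper first invokes Proposition~\ref{prop:holder-implies-gac} to get \(g\)-absolute continuity, and hence, through the Stieltjes differentiation theory, existence of \(f'_g\) \(\mu_g\)-almost everywhere. Only then does it show that the derivative vanishes. You instead show directly that the limit in (2.11) exists and equals \(0\) at every \(t\notin D_g\cup C_g\), using \(t\notin C_g\) to make sure the limit is not vacuous.

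Your route is more self-contained, since it needs no differentiation theorem for \(g\)-absolutely continuous functions. It also gives the slightly stronger conclusion that \(f'_g=0\) at every point outside the explicit \(\mu_g^c\)-null set \(D_g\cup C_g\). The paper's version is shorter only because it relies on that known existence result.
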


\begin{proof}
By Proposition~\ref{prop:holder-implies-gac}, \(f\) is \(g\)-absolutely continuous, and hence \(f'_g\) exists \(\mu_g\)-almost everywhere. Since \(D_g\) is at most countable and \(\mu_g^c\) is nonatomic, \(\mu_g^c(D_g)=0\). Moreover, \(\mu_g(C_g)=0\). Hence, for \(\mu_g^c\)-almost every point \(t\) at which the derivative exists, \(t\notin D_g\cup C_g\); since \(g\) is left-continuous and \(t\notin D_g\), \(g\) is continuous at \(t\). For such a point,
\[
\frac{\|f(t)-f(s)\|}{|g(t)-g(s)|}
\le [f]_{\alpha,g}|g(t)-g(s)|^{\alpha-1}
\]
whenever \(s\to t\) and \(g(s)\ne g(t)\). Since \(\alpha-1>0\) and \(|g(s)-g(t)|\to0\), the right-hand side tends to zero. Hence \(f'_g=0\) \(\mu_g^c\)-almost everywhere.
\end{proof}

\begin{theorem}[Pure-jump representation and exact seminorm formula]
\label{thm:pure-jump-characterization}
Let \(g:I\to\R\) be left-continuous and nondecreasing and let \(\alpha>1\). For \(f:I\to\R^d\), the following are equivalent:
\begin{enumerate}[label=\textup{(\roman*)}]
\item \(f\) is super-\(g\)-H\"older with exponent \(\alpha\);
\item there exists a family \((v_\tau)_{\tau\in D_g}\subset\R^d\) such that, for every \(t\in I\),
\[
f(t)=f(a)+\sum_{\substack{\tau\in D_g\\a\le\tau<t}}v_\tau,
\tag{3.2}
\]
and
\[
\sup_{\tau\in D_g}\frac{\|v_\tau\|}{[\Delta^+g(\tau)]^\alpha}<\infty.
\tag{3.3}
\]
\end{enumerate}
In this representation \(v_\tau=\Delta^+f(\tau)\), and
\[
[f]_{\alpha,g}
=
\sup_{\tau\in D_g}
\frac{\|\Delta^+f(\tau)\|}{[\Delta^+g(\tau)]^\alpha}.
\tag{3.4}
\]
When \(D_g=\varnothing\), the supremum in (3.3)--(3.4) is understood to be zero.
\end{theorem}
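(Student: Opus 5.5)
For (i)\(\Rightarrow\)(ii), the plan is to combine Proposition~\ref{prop:holder-implies-gac} and Proposition~\ref{prop:continuous-extinction} with the fundamental theorem (2.13). By Proposition~\ref{prop:holder-implies-gac}, \(f\) is \(g\)-absolutely continuous, so (2.13) gives
\[
f(t)=f(a)+\int_{[a,t)}f'_g\,d\mu_g .
\]
Splitting \(\mu_g=\mu_g^c+\mu_g^a\) as in (2.5)--(2.6), the \(\mu_g^c\)-part of the integral vanishes by Proposition~\ref{prop:continuous-extinction}. The atomic part equals
\[
\sum_{\tau\in D_g,\,a\le\tau<t} f'_g(\tau)\,\Delta^+g(\tau)=\sum_{\tau\in D_g,\,a\le\tau<t}\Delta^+f(\tau),
\]
by (2.12). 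This yields (3.2) with \(v_\tau=\Delta^+f(\tau)\). The right limits exist because a \(g\)-H\"older function is of bounded variation: its variation is bounded by \(L\sum u_k^\alpha\le L(g(b)-g(a))^\alpha\). The sum converges absolutely, since \(f'_g\in L^1(\mu_g)\), or directly from \(\|v_\tau\|\le Lm_\tau^\alpha\) and \(\sum m_\tau<\infty\). For the lower bound in (3.4), fix \(\tau\in D_g\) and let \(t\downarrow\tau\) in the quotient \(\|f(t)-f(\tau)\|/|g(t)-g(\tau)|^\alpha\). Since \(g(t)-g(\tau)\ge m_\tau>0\), the quotient tends to \(\|\Delta^+f(\tau)\|/m_\tau^\alpha\), which gives \([f]_{\alpha,g}\ge\sup_\tau\|v_\tau\|/m_\tau^\alpha\). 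In particular (3.3) holds.

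For (ii)\(\Rightarrow\)(i), let \(M\) denote the supremum in (3.3). Then \(\sum_\tau\|v_\tau\|\le M\sum_\tau m_\tau^\alpha\le M(g(b)-g(a))^{\alpha-1}\sum_\tau m_\tau<\infty\), so (3.2) defines \(f\) unambiguously. Take \(s<t\). Then
\[
\|f(t)-f(s)\|\le\sum_{s\le\tau<t}\|v_\tau\|\le M\sum_{s\le\tau<t}m_\tau^\alpha\le M\Bigl(\sum_{s\le\tau<t}m_\tau\Bigr)^\alpha\le M\,(g(t)-g(s))^\alpha .
\]
The third inequality is superadditivity of \(x\mapsto x^\alpha\) for \(\alpha\ge1\), that is, \(\sum x_i^\alpha\le(\sum x_i)^\alpha\) for nonnegative \(x_i\), extended to countable sums by monotone limits. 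The last inequality is \(\mu_g^a([s,t))\le\mu_g([s,t))=g(t)-g(s)\). If \(g(s)=g(t)\), the same estimate gives \(f(s)=f(t)\), so condition (2.8) is automatic. Hence \(f\) is super-\(g\)-H\"older and \([f]_{\alpha,g}\le M\).

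It remains to identify \(v_\tau\) with \(\Delta^+f(\tau)\); combined with the two inequalities above, this gives equality in (3.4). From (3.2),
\[
f(t)-f(\tau)=v_\tau+\sum_{\tau<\sigma<t}v_\sigma\qquad(t>\tau).
\]
The tail sum tends to \(0\) as \(t\downarrow\tau\) by dominated convergence, because \(\bigcap_{t>\tau}(\tau,t)=\varnothing\) and the family \((v_\sigma)\) is absolutely summable. So \(\Delta^+f(\tau)=v_\tau\). Consequently, in direction (i)\(\Rightarrow\)(ii) any representing family must equal \(\Delta^+f\), and in direction (ii)\(\Rightarrow\)(i) the bound \(M\) coincides with the jump supremum.

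The main obstacle is the first direction's reliance on (2.13) and the \(\mu_g\)-a.e.\ existence of \(f'_g\), which I take as given from the cited Stieltjes fundamental theorem. If that proved delicate, I would use a direct alternative. Define \(h(t)=f(t)-f(a)-\sum_{a\le\tau<t}\Delta^+f(\tau)\), and show that \(h\) is continuous in \(g\), has no jumps, and satisfies \(\|h(t)-h(s)\|\le C\,\mu_g^c([s,t))\cdot\sup\) of small increments. To do this, subdivide \([s,t)\) into pieces whose continuous \(g\)-mass is at most \(\eta\), apply the \(\alpha>1\) Hölder bound on each piece after removing the finitely many large atoms, and let \(\eta\to0\) to conclude \(h\equiv0\).
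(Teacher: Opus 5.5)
Your proposal is correct and follows essentially the same route as the paper. For (i)$\Rightarrow$(ii) you combine Propositions~\ref{prop:holder-implies-gac} and~\ref{prop:continuous-extinction} with (2.12)--(2.13) and let $t\downarrow\tau$ in the H\"older quotient; for (ii)$\Rightarrow$(i) you use absolute summability, the estimate $\sum_{s\le\tau<t}m_\tau^\alpha\le\bigl(\sum_{s\le\tau<t}m_\tau\bigr)^\alpha\le(g(t)-g(s))^\alpha$, and the tail argument giving $\Delta^+f(\tau)=v_\tau$. The differences from the paper are minor: you obtain the right limits from bounded variation rather than a Cauchy argument, and you check (2.8) explicitly, so the fallback sketch at the end is not needed.
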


\begin{proof}
Assume first that \(f\) is super-\(g\)-H\"older. By Proposition~\ref{prop:holder-implies-gac},
\[
f(t)-f(a)=\int_{[a,t)}f'_g\,d\mu_g.
\]
Using the decomposition \(\mu_g=\mu_g^c+\mu_g^a\), Proposition~\ref{prop:continuous-extinction}, and the atomic derivative formula (2.12),
\[
f(t)-f(a)
=
\sum_{\substack{\tau\in D_g\\\tau<t}}f'_g(\tau)\Delta^+g(\tau)
=
\sum_{\substack{\tau\in D_g\\\tau<t}}\Delta^+f(\tau).
\]
Fix \(\tau\in D_g\). Since \(g(t)\to g(\tau+)\) as \(t\downarrow\tau\), the \(g\)-H\"older estimate shows that \(f(t)\) is Cauchy as \(t\downarrow\tau\). Hence the right limit \(f(\tau+)\) exists. Letting \(t\downarrow\tau\) in the \(g\)-H\"older estimate then gives
\[
\|\Delta^+f(\tau)\|\le [f]_{\alpha,g}[\Delta^+g(\tau)]^\alpha.
\tag{3.5}
\]

Conversely, suppose (3.2)--(3.3) hold and let
\[
C:=\sup_{\tau\in D_g}\frac{\|v_\tau\|}{[\Delta^+g(\tau)]^\alpha}.
\]
Since
\[
\sum_{\tau\in D_g}\Delta^+g(\tau)\le g(b)-g(a)<\infty,
\]
and \(\alpha>1\),
\[
\sum_{\tau\in D_g}[\Delta^+g(\tau)]^\alpha
\le
\left(\sum_{\tau\in D_g}\Delta^+g(\tau)\right)^\alpha<\infty.
\]
Thus the series in (3.2) is absolutely convergent. By (3.2) and the absolute convergence just established, for \(s,t\in I\) with \(s<t\),
\[
\begin{aligned}
\|f(t)-f(s)\|
&\le C\sum_{\substack{\tau\in D_g\\s\le\tau<t}}[\Delta^+g(\tau)]^\alpha\\
&\le C\left(\sum_{\substack{\tau\in D_g\\s\le\tau<t}}\Delta^+g(\tau)\right)^\alpha\\
&\le C|g(t)-g(s)|^\alpha.
\end{aligned}
\]
Hence \([f]_{\alpha,g}\le C\). For each \(\tau\in D_g\), absolute convergence gives \(f(t)\to f(\tau)+v_\tau\) as \(t\downarrow\tau\), while monotonicity of \(g\) gives \(g(t)\to g(\tau+)\). Therefore,
\[
\frac{\|\Delta^+f(\tau)\|}{[\Delta^+g(\tau)]^\alpha}
\le [f]_{\alpha,g},
\]
which gives the reverse inequality in (3.4).
\end{proof}

\subsection{Sequence-space representation}

The pure-jump characterization now allows the super-$g$-H\"older space to be described directly by its normalized jump sequence. We use this representation to determine the basic function-space properties.

For \(f\in\mathcal H_{\alpha,g}(I;\R^d)\) and \(\tau\in D_g\), define
\[
z_\tau(f)
:=
\frac{\Delta^+f(\tau)}{[\Delta^+g(\tau)]^\alpha},
\]
and write
\[
Z_{\alpha,g}(f)
:=
\bigl(z_\tau(f)\bigr)_{\tau\in D_g}.
\]
By Theorem~\ref{thm:pure-jump-characterization},
\[
Z_{\alpha,g}(f)\in\ell^\infty(D_g;\R^d),
\qquad
\|Z_{\alpha,g}(f)\|_{\ell^\infty}
=
[f]_{\alpha,g}.
\]

\begin{theorem}[Sequence-space representation]
\label{thm:jump-space-isometry}
The map
\[
\mathcal J_{\alpha,g}:
\mathcal H_{\alpha,g}(I;\R^d)
\longrightarrow
\R^d\oplus_1\ell^\infty(D_g;\R^d),
\]
defined by
\[
\mathcal J_{\alpha,g}(f)
:=
\bigl(f(a),Z_{\alpha,g}(f)\bigr),
\]
is a linear isometric isomorphism. Its restriction to \(\mathcal H_{\alpha,g}^{0}(I;\R^d)\) gives the linear isometric isomorphism
\[
T_{\alpha,g}:
\mathcal H_{\alpha,g}^{0}(I;\R^d)
\longrightarrow
\ell^\infty(D_g;\R^d),
\qquad
T_{\alpha,g}(f):=Z_{\alpha,g}(f).
\]
\end{theorem}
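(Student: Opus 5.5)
We verify linearity, isometry, injectivity and surjectivity of \(\mathcal J_{\alpha,g}\) in turn, each reduced to Theorem~\ref{thm:pure-jump-characterization}.

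\emph{Well-definedness and linearity.} By Theorem~\ref{thm:pure-jump-characterization}, every \(f\in\mathcal H_{\alpha,g}(I;\R^d)\) has right limits \(f(\tau+)\) at each \(\tau\in D_g\). Hence \(Z_{\alpha,g}(f)\) is defined, and by (3.4) it lies in \(\ell^\infty(D_g;\R^d)\). Right limits are linear in \(f\), so \(\Delta^+(\lambda f+\mu h)(\tau)=\lambda\Delta^+f(\tau)+\mu\Delta^+h(\tau)\). Evaluation at \(a\) is also linear, so \(\mathcal J_{\alpha,g}\) is linear. We also check that \(\mathcal H_{\alpha,g}(I;\R^d)\) is a linear space. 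The seminorm \([\cdot]_{\alpha,g}\) is subadditive and absolutely homogeneous, and the implication (2.8) is preserved under linear combinations.

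\emph{Isometry.} By (3.4),
\[
\|\mathcal J_{\alpha,g}(f)\|=\|f(a)\|+\|Z_{\alpha,g}(f)\|_{\ell^\infty}=\|f(a)\|+[f]_{\alpha,g}=\|f\|_{\alpha,g}.
\]
In particular \(\mathcal J_{\alpha,g}\) is injective, which can also be seen directly from (3.2): \(f\) is determined by \(f(a)\) and its jumps.

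\emph{Surjectivity.} This is the only step requiring a construction. Given \((x,\xi)\in\R^d\oplus_1\ell^\infty(D_g;\R^d)\), set \(v_\tau:=[\Delta^+g(\tau)]^\alpha\,\xi(\tau)\) and
\[
f(t):=x+\sum_{\tau\in D_g,\ a\le\tau<t}v_\tau .
\]
The series converges absolutely, since \(\sum_\tau[\Delta^+g(\tau)]^\alpha\le(g(b)-g(a))^\alpha<\infty\), exactly as in the proof of the theorem. Moreover
\[
\sup_\tau \frac{\|v_\tau\|}{[\Delta^+g(\tau)]^\alpha}=\|\xi\|_{\ell^\infty}<\infty.
\]
So (3.2)--(3.3) hold, and the implication (ii)\(\Rightarrow\)(i) gives \(f\in\mathcal H_{\alpha,g}\).

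It remains to check that the jumps of this \(f\) are the prescribed \(v_\tau\); this is where care is needed. As \(t\downarrow\tau\), the index set \(\{\sigma<t\}\) decreases to \(\{\sigma\le\tau\}\). Dominated convergence for absolutely summable series then gives \(f(\tau+)=f(\tau)+v_\tau\). This is the identification \(v_\tau=\Delta^+f(\tau)\) asserted in the theorem. Together with \(f(a)=x\) (the sum over \(a\le\tau<a\) is empty), it yields \(\mathcal J_{\alpha,g}(f)=(x,\xi)\).

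The case \(D_g=\varnothing\) is consistent with these conventions. There every \(f\) is constant, and \(\ell^\infty(D_g;\R^d)\) is the zero space.

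\emph{Restriction.} \(\mathcal J_{\alpha,g}\) maps \(\mathcal H^0_{\alpha,g}(I;\R^d)\) onto \(\{0\}\times\ell^\infty(D_g;\R^d)\), and the norm there reduces to \(\|\xi\|_{\ell^\infty}\). Hence \(T_{\alpha,g}\) is a linear isometric isomorphism onto \(\ell^\infty(D_g;\R^d)\).
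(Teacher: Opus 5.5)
Your proposal is correct and follows the paper's proof essentially step for step. Isometry and injectivity come from the exact seminorm formula (3.4) and the pure-jump representation, and surjectivity comes from building \(f(t)=x+\sum_{\tau<t}\xi(\tau)[\Delta^+g(\tau)]^\alpha\) and invoking the converse direction of Theorem~\ref{thm:pure-jump-characterization}; your explicit dominated-convergence check that \(\Delta^+f(\tau)=v_\tau\) just spells out what the paper absorbs into that converse direction.
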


\begin{proof}
By Theorem~\ref{thm:pure-jump-characterization}, for every \(f\in\mathcal H_{\alpha,g}(I;\R^d)\), one has \([f]_{\alpha,g}=\sup_{\tau\in D_g}\|z_\tau(f)\|=\|Z_{\alpha,g}(f)\|_{\ell^\infty}\), and \(f\) is uniquely determined by \(f(a)\) and its jumps. Hence \(\mathcal J_{\alpha,g}\) is linear and injective. Moreover,
\[
\|\mathcal J_{\alpha,g}(f)\|
=
\|f(a)\|+\|Z_{\alpha,g}(f)\|_{\ell^\infty}
=
\|f(a)\|+[f]_{\alpha,g}
=
\|f\|_{\alpha,g},
\]
so \(\mathcal J_{\alpha,g}\) is an isometry.

It remains to prove surjectivity. Let \((x,\xi)\in\R^d\oplus_1\ell^\infty(D_g;\R^d)\) and define \(f:I\to\R^d\) by
\[
f(t)
:=
x+
\sum_{\substack{\tau\in D_g\\ \tau<t}}
\xi(\tau)[\Delta^+g(\tau)]^\alpha,
\qquad t\in I.
\]
Since \(\xi\in\ell^\infty(D_g;\R^d)\),
\[
\sum_{\tau\in D_g}
\|\xi(\tau)\|[\Delta^+g(\tau)]^\alpha
\le
\|\xi\|_{\ell^\infty}
\sum_{\tau\in D_g}[\Delta^+g(\tau)]^\alpha
\le
\|\xi\|_{\ell^\infty}[g(b)-g(a)]^\alpha
<\infty.
\]
Thus, for every \(t\in I\), the series defining \(f(t)\) converges absolutely, so \(f:I\to\R^d\) is well defined. Applying the converse part of Theorem~\ref{thm:pure-jump-characterization} with \(v_\tau:=\xi(\tau)[\Delta^+g(\tau)]^\alpha\) for \(\tau\in D_g\) gives \(f\in\mathcal H_{\alpha,g}(I;\R^d)\), with \(f(a)=x\) and \(z_\tau(f)=\xi(\tau)\) for every \(\tau\in D_g\). Hence \(Z_{\alpha,g}(f)=\xi\) and \(\mathcal J_{\alpha,g}(f)=(x,\xi)\), so \(\mathcal J_{\alpha,g}\) is surjective. Restricting to \(f\in\mathcal H_{\alpha,g}^{0}(I;\R^d)\), for which \(f(a)=0\), yields the stated linear isometric isomorphism \(T_{\alpha,g}\).
\end{proof}

\begin{corollary}[Banach structure, dimension, and separability]
\label{cor:function-space-structure}
Both \(\mathcal H_{\alpha,g}^{0}(I;\R^d)\) and \(\mathcal H_{\alpha,g}(I;\R^d)\) are Banach spaces. They are separable if and only if \(D_g\) is finite. Suppose that \(D_g\) contains exactly \(N\) jump points. Equivalently, \(\mu_g\) has exactly \(N\) atoms. Then
\[
\dim \mathcal H_{\alpha,g}^{0}(I;\R^d)=dN,
\qquad
\dim \mathcal H_{\alpha,g}(I;\R^d)=d(N+1).
\]
If \(D_g\) is infinite, then both spaces are nonseparable.
\end{corollary}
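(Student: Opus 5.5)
The plan is to transfer every property through the isometric isomorphisms of Theorem~\ref{thm:jump-space-isometry}. Completeness, separability and dimension are all invariant under linear isometric isomorphism, so it is enough to check them for \(\ell^\infty(D_g;\R^d)\) and for \(\R^d\oplus_1\ell^\infty(D_g;\R^d)\).

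\emph{Banach property.} The space \(\ell^\infty(D_g;\R^d)\) is complete by the standard argument. Take a Cauchy sequence \((\xi_n)\). It converges pointwise, since each \((\xi_n(\tau))_n\) is Cauchy in \(\R^d\). The Cauchy estimate is uniform in \(\tau\), so letting \(m\to\infty\) in \(\|\xi_n(\tau)-\xi_m(\tau)\|<\varepsilon\) shows that the pointwise limit is bounded and is the \(\ell^\infty\)-limit. The same holds trivially when \(D_g=\varnothing\). A finite \(\oplus_1\)-sum of Banach spaces is Banach, because its norm is equivalent to the max norm on the product. Since \(\mathcal J_{\alpha,g}\) and \(T_{\alpha,g}\) are isometric isomorphisms, both \(\mathcal H_{\alpha,g}^0(I;\R^d)\) and \(\mathcal H_{\alpha,g}(I;\R^d)\) are Banach spaces.

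\emph{Finite \(D_g\).} If \(D_g\) has exactly \(N\) points, then \(\ell^\infty(D_g;\R^d)\cong(\R^d)^N\) as vector spaces, which has dimension \(dN\). Adding the summand \(\R^d\) gives \(d(N+1)\). Finite-dimensional normed spaces are separable, which gives the ``if'' direction of the separability claim. The identification of \(N\) with the number of atoms of \(\mu_g\) follows directly from (2.4) and (2.6).

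\emph{Infinite \(D_g\): nonseparability.} By (2.3), \(D_g\) is then countably infinite. Fix a unit vector \(e\in\R^d\). For each subset \(A\subset D_g\), define \(\xi_A:=e\,\mathbf 1_A\in\ell^\infty(D_g;\R^d)\). Whenever \(A\ne B\), we have \(\|\xi_A-\xi_B\|_{\ell^\infty}=1\). This gives an uncountable (continuum-sized) \(1\)-separated family. The open balls of radius \(1/2\) around these points are pairwise disjoint, so any dense set would have to meet each ball and hence be uncountable. Thus \(\ell^\infty(D_g;\R^d)\) is nonseparable. The \(\oplus_1\)-sum contains it isometrically via \(\xi\mapsto(0,\xi)\), and a subspace of a separable metric space is separable, so the sum is nonseparable as well. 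Pulling back through the isometries gives the claim for both function spaces, and also the ``only if'' direction of the separability equivalence.

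No step is a real obstacle. The only point needing care is to pass separability and completeness through the isometries explicitly, and to note that a separable metric space has separable subspaces.
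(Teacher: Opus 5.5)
Your proposal is correct and follows the paper's proof essentially step for step. Like the paper, you transfer everything through the isometries of Theorem~\ref{thm:jump-space-isometry}, read off the dimensions from \(\ell^\infty(D_g;\R^d)\cong(\R^d)^N\), and get nonseparability from the uncountable \(1\)-separated family \(\{e\,\mathbf 1_A\}_{A\subset D_g}\). You are slightly more explicit than the paper, which takes completeness of \(\ell^\infty\) for granted and does not spell out that \(\R^d\oplus_1\ell^\infty(D_g;\R^d)\) inherits nonseparability from its isometric copy of \(\ell^\infty(D_g;\R^d)\).
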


\begin{proof}
By Theorem~\ref{thm:jump-space-isometry}, \(\mathcal H_{\alpha,g}^{0}(I;\R^d)\) and \(\mathcal H_{\alpha,g}(I;\R^d)\) are isometrically isomorphic to \(\ell^\infty(D_g;\R^d)\) and \(\R^d\oplus_1\ell^\infty(D_g;\R^d)\), respectively. Since these target spaces are Banach, so are the two super-\(g\)-H\"older spaces.

If \(\operatorname{card}(D_g)=N<\infty\), then \(\ell^\infty(D_g;\R^d)\cong(\R^d)^N\). The stated dimension formulas follow, and both spaces are separable. Suppose now that \(D_g\) is infinite. Since \(g\) is nondecreasing, \(D_g\) is countably infinite. Choose \(e\in\R^d\) with \(\|e\|=1\). For each \(A\subset D_g\), define \(\xi_A:D_g\to\R^d\) by \(\xi_A(\tau)=e\) if \(\tau\in A\) and \(\xi_A(\tau)=0\) otherwise. Then \(\xi_A\in\ell^\infty(D_g;\R^d)\), and \(\|\xi_A-\xi_B\|_{\ell^\infty}=1\) whenever \(A\ne B\). Since a countably infinite set has uncountably many subsets, \(\ell^\infty(D_g;\R^d)\) contains an uncountable \(1\)-separated set and is therefore nonseparable. The isometric identifications in Theorem~\ref{thm:jump-space-isometry} transfer this property to both super-\(g\)-H\"older spaces. Hence they are separable if and only if \(D_g\) is finite.
\end{proof}

\paragraph{Remark.}
If \(g\) is continuous, then \(D_g=\varnothing\), and Theorem~\ref{thm:pure-jump-characterization} gives \(\mathcal H_{\alpha,g}^{0}(I;\R^d)=\{0\}\), while \(\mathcal H_{\alpha,g}(I;\R^d)\) consists only of constant functions. Thus, under super-\(g\)-H\"older regularity, the dimension and separability of the function space are determined entirely by the jump structure of the clock.

\paragraph{Remark.}
Theorem~\ref{thm:pure-jump-characterization} shows that every function in \(\mathcal H_{\alpha,g}(I;\R^d)\) is entirely determined by its jumps at the jump points of \(g\). Thus its total variation is governed by the atomic mass distribution of the clock. We therefore set
\[
A_\alpha(g):=\sum_{\tau\in D_g}m_\tau^\alpha.
\]
The next result shows that \(A_\alpha(g)\) is exactly the optimal constant controlling total variation by the super-\(g\)-H\"older seminorm.

\begin{corollary}[Exact variation formula and sharp bounded-variation estimate]
\label{cor:sharp-bv-embedding}
For every \(f\in\mathcal H_{\alpha,g}(I;\R^d)\),
\[
\operatorname{Var}_I(f)
=
\sum_{\tau\in D_g}\|\Delta^+f(\tau)\|
\le
A_\alpha(g)[f]_{\alpha,g}.
\]
Moreover, \(A_\alpha(g)\le [g(b)-g(a)]^\alpha\), and the constant \(A_\alpha(g)\) is sharp:
\[
\sup_{\substack{f\in\mathcal H_{\alpha,g}^{0}(I;\R^d)\\ [f]_{\alpha,g}\le1}}
\operatorname{Var}_I(f)
=
A_\alpha(g).
\]
\end{corollary}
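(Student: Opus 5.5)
The plan is to derive everything from the pure-jump representation in Theorem~\ref{thm:pure-jump-characterization}. Write \(f(t)=f(a)+\sum_{\tau\in D_g,\,\tau<t}v_\tau\) with \(v_\tau=\Delta^+f(\tau)\). By (3.4), \(\|v_\tau\|\le[f]_{\alpha,g}m_\tau^\alpha\), and the family \((v_\tau)\) is absolutely summable because \(A_\alpha(g)<\infty\).

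\textbf{Upper bound on the variation.} Take a partition \(a=t_0<\dots<t_n=b\). Each increment \(f(t_j)-f(t_{j-1})\) equals the sum of \(v_\tau\) over \(\tau\in D_g\cap[t_{j-1},t_j)\), and these half-open intervals are pairwise disjoint. The triangle inequality therefore gives
\[
\sum_j\|f(t_j)-f(t_{j-1})\|\le\sum_{\tau\in D_g}\|v_\tau\|,
\]
so \(\operatorname{Var}_I(f)\le\sum_{\tau\in D_g}\|\Delta^+f(\tau)\|\).

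\textbf{Lower bound on the variation.} This is the step needing care. Fix \(\varepsilon>0\) and choose a finite set \(F\subset D_g\) with \(\sum_{\tau\notin F}\|v_\tau\|<\varepsilon\), using absolute summability. List \(F=\{\tau_1<\dots<\tau_k\}\) and, for each \(i\), pick \(s_i\in(\tau_i,b]\) close enough to \(\tau_i\) that \(s_i<\tau_{i+1}\). Consider the partition containing all points \(\tau_i\) and \(s_i\), together with \(a\) and \(b\). The interval \([\tau_i,s_i)\) contains the atom \(\tau_i\) plus possibly some atoms outside \(F\). The increment over it is therefore \(v_{\tau_i}+r_i\), where the \(r_i\) are sums over disjoint sets of atoms outside \(F\), so \(\sum_i\|r_i\|<\varepsilon\). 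Dropping the remaining terms of the partition sum, which are nonnegative, we get
\[
\operatorname{Var}_I(f)\ge\sum_{\tau\in F}\|v_\tau\|-\varepsilon\ge\sum_{\tau\in D_g}\|v_\tau\|-2\varepsilon.
\]
Letting \(\varepsilon\to0\) yields equality. We need \(s_i\le b\) and \(\tau_i<b\); both hold since \(D_g\subset[a,b)\).

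\textbf{The inequality and the estimate on \(A_\alpha(g)\).} Summing \(\|\Delta^+f(\tau)\|\le[f]_{\alpha,g}m_\tau^\alpha\) over \(\tau\in D_g\) gives \(\operatorname{Var}_I(f)\le A_\alpha(g)[f]_{\alpha,g}\). The bound \(A_\alpha(g)\le(\sum_{\tau\in D_g} m_\tau)^\alpha\le[g(b)-g(a)]^\alpha\) is the superadditivity of \(x\mapsto x^\alpha\) already used in the proof of Theorem~\ref{thm:pure-jump-characterization}.

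\textbf{Sharpness.} Fix a unit vector \(e\in\R^d\) and set \(\xi\equiv e\) on \(D_g\). By the surjectivity in Theorem~\ref{thm:jump-space-isometry}, there is \(f=T_{\alpha,g}^{-1}\xi\in\mathcal H^0_{\alpha,g}(I;\R^d)\) with \([f]_{\alpha,g}=1\) and \(\Delta^+f(\tau)=m_\tau^\alpha e\). The exact formula then gives \(\operatorname{Var}_I(f)=A_\alpha(g)\), so the supremum is attained; if \(D_g=\varnothing\), both sides are zero. Combined with the upper bound, this proves the sharpness statement.
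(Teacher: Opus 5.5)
Your proposal is correct and follows essentially the same route as the paper: the triangle inequality over a partition for the upper bound, a partition built around finitely many atoms for the reverse inequality, superadditivity of \(x\mapsto x^\alpha\) for \(A_\alpha(g)\le[g(b)-g(a)]^\alpha\), and the extremal function with \(\Delta^+f(\tau)=m_\tau^\alpha e\) for sharpness. The one bookkeeping difference is in the reverse inequality: you choose \(F\) with small global tail, so the points \(s_i\) need only separate consecutive atoms of \(F\), whereas the paper fixes an arbitrary finite \(F\), chooses \(r_\tau\) close to \(\tau\) so that the jump mass in \((\tau,r_\tau)\) is small, and then takes the supremum over \(F\).
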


\begin{proof}
By Theorem~\ref{thm:pure-jump-characterization}, \(f\) has an absolutely convergent pure-jump representation. For any partition \(a=t_0<\cdots<t_n=b\), each increment \(f(t_j)-f(t_{j-1})\) is the sum of the jumps with \(t_{j-1}\le\tau<t_j\). The triangle inequality therefore gives
\[
\operatorname{Var}_I(f)
\le
\sum_{\tau\in D_g}\|\Delta^+f(\tau)\|.
\]
Conversely, let \(\varnothing\ne F\subset D_g\) be finite and let \(\varepsilon>0\). By absolute convergence, for each \(\tau\in F\) one can choose \(r_\tau>\tau\), sufficiently close to \(\tau\) and with the corresponding intervals pairwise disjoint, so that the total magnitude of the jumps strictly between \(\tau\) and \(r_\tau\) is less than \(\varepsilon/\operatorname{card}(F)\). A partition containing all \(\tau\) and \(r_\tau\) then gives \(\operatorname{Var}_I(f)\ge\sum_{\tau\in F}\|\Delta^+f(\tau)\|-\varepsilon\). Letting \(\varepsilon\downarrow0\) and then taking the supremum over finite \(F\subset D_g\) yields the reverse inequality.

By the definition of \(z_\tau(f)\), for each \(\tau\in D_g\) we have \(\|\Delta^+f(\tau)\|=m_\tau^\alpha\|z_\tau(f)\|\le m_\tau^\alpha[f]_{\alpha,g}\). Summation gives the stated variation estimate. Moreover,
\[
A_\alpha(g)
=
\sum_{\tau\in D_g}m_\tau^\alpha
\le
\left(\sum_{\tau\in D_g}m_\tau\right)^\alpha
\le
[g(b)-g(a)]^\alpha.
\]
For sharpness, both \([f]_{\alpha,g}\) and \(\operatorname{Var}_I(f)\) are invariant under addition of constants, so the extremal problem may be normalized by \(f(a)=0\). If \(D_g=\varnothing\), then \(A_\alpha(g)=0\) and the assertion is immediate. Otherwise, choose \(v\in\R^d\) with \(\|v\|=1\) and use the converse part of Theorem~\ref{thm:pure-jump-characterization} to prescribe \(\Delta^+f(\tau)=m_\tau^\alpha v\) for \(\tau\in D_g\). Then \(z_\tau(f)=v\) for every \(\tau\in D_g\), so \([f]_{\alpha,g}=1\), while the variation identity above gives \(\operatorname{Var}_I(f)=A_\alpha(g)\). Thus the constant is sharp.
\end{proof}

\subsection{Finite-jump approximation, norm closure, and compact embeddings}

The atomic representation suggests a natural approximation problem. Given \(f\in\mathcal H_{\alpha,g}(I;\R^d)\), we approximate \(f\) by functions \(h\in\mathcal H_{\alpha,g}(I;\R^d)\) that retain nonzero jumps at only finitely many jump points of the clock. More specifically, for a prescribed \(N\in\mathbb N_0\), we restrict the approximating function to have nonzero jumps at at most \(N\) jump points, while imposing \(h(a)=f(a)\) so that the approximation concerns only the atomic part of the function.

For approximation by functions with at most \(N\) nonzero jumps, the primary error is measured in the super-\(g\)-H\"older seminorm,
\[
[f-h]_{\alpha,g}
=
\sup_{\tau\in D_g}
\frac{\|\Delta^+f(\tau)-\Delta^+h(\tau)\|}{m_\tau^\alpha}.
\]
Thus the problem is to retain at most \(N\) nonzero jumps so that the largest remaining normalized jump discrepancy is as small as possible. The results below determine this best approximation error, characterize norm approximation by finite-jump functions, and quantify the improvement obtained when the function has a larger super-\(g\)-H\"older exponent.

For \(f\in\mathcal H_{\alpha,g}(I;\R^d)\), put
\[
q_f(\tau):=
\frac{\|\Delta^+f(\tau)\|}{m_\tau^\alpha},
\qquad \tau\in D_g,
\]
so that \(q_f(\tau)\) is the normalized jump magnitude at \(\tau\). For \(N\in\mathbb N_0\), define the threshold
\[
q^*_{N+1}(f)
:=
\inf\left\{
 r\ge0:
 \operatorname{card}\{\tau\in D_g:q_f(\tau)>r\}\le N
\right\}.
\tag{3.6}
\]
We also define the best error among approximants with at most \(N\) nonzero jumps by
\[
\mathcal E_N^{(\alpha)}(f)
:=
\inf\left\{
 [f-h]_{\alpha,g}:
 h\in\mathcal H_{\alpha,g}(I;\R^d),\ h(a)=f(a),\
 \operatorname{card}\{\tau\in D_g:\Delta^+h(\tau)\ne0\}\le N
\right\}.
\tag{3.7}
\]

\begin{theorem}[Best approximation by at most \(N\) jumps]
\label{thm:best-N-jump}
Let \(\alpha>1\) and \(N\in\mathbb N_0\).
\begin{enumerate}[label=\textup{(\roman*)}]
\item For every \(f\in\mathcal H_{\alpha,g}(I;\R^d)\),
\[
\boxed{\mathcal E_N^{(\alpha)}(f)=q^*_{N+1}(f).}
\tag{3.8}
\]
Thus the best error is exactly the threshold defined in (3.6).

\item Let \(E\subset D_g\) be finite and define
\[
(P_Ef)(t)
:=
f(a)+
\sum_{\substack{\tau\in E\\ \tau<t}}
\Delta^+f(\tau).
\]
Then
\[
[f-P_Ef]_{\alpha,g}
=
\sup_{\tau\in D_g\setminus E}q_f(\tau),
\tag{3.9}
\]
and
\[
\|f-P_Ef\|_\infty
\le
\sum_{\tau\in D_g\setminus E}\|\Delta^+f(\tau)\|
\le
[f]_{\alpha,g}
\sum_{\tau\in D_g\setminus E}m_\tau^\alpha.
\tag{3.10}
\]
Consequently, finite-jump functions are uniformly dense in \(\mathcal H_{\alpha,g}(I;\R^d)\). They are dense in the norm \(\|\cdot\|_{\alpha,g}\) at a given \(f\) if and only if, for every \(\varepsilon>0\),
\[
\operatorname{card}\{\tau\in D_g:q_f(\tau)\ge\varepsilon\}<\infty.
\tag{3.11}
\]
Equivalently, under Theorem~\ref{thm:jump-space-isometry}, their norm closure is \(\R^d\oplus_1 c_0(D_g;\R^d)\).

\item Let \(\beta>\alpha>1\). When \(D_g\) is infinite, enumerate its jump points as \((\tau_k^*)_{k\ge1}\) so that the atom masses \(m_k^*:=\Delta^+g(\tau_k^*)\) satisfy \(m_1^*\ge m_2^*\ge\cdots>0\). When \(D_g\) is finite, use the same ordering and set \(m_k^*=0\) for \(k>\operatorname{card}(D_g)\). Then
\[
\boxed{
\sup_{\substack{
f\in\mathcal H_{\beta,g}^{0}(I;\R^d)\\
[f]_{\beta,g}\le1}}
\mathcal E_N^{(\alpha)}(f)
=
(m_{N+1}^*)^{\beta-\alpha}.
}
\tag{3.12}
\]
Writing \(M_g:=g(b)-g(a)\), one also has the universal rate
\[
(m_{N+1}^*)^{\beta-\alpha}
\le
\left(\frac{M_g}{N+1}\right)^{\beta-\alpha}.
\tag{3.13}
\]
\end{enumerate}
\end{theorem}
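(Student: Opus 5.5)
The proof reduces everything to coordinates through Theorem~\ref{thm:jump-space-isometry}. Since \(h(a)=f(a)\), the difference \(f-h\) lies in \(\mathcal H^0_{\alpha,g}(I;\R^d)\). By Theorem~\ref{thm:pure-jump-characterization},
\[
[f-h]_{\alpha,g}=\sup_{\tau\in D_g}\|z_\tau(f)-z_\tau(h)\|.
\]
Conversely, any finitely supported family \(\xi\in\ell^\infty(D_g;\R^d)\) arises as \(Z_{\alpha,g}(h)\) for an admissible \(h\).

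\emph{Part (i).} Problem (3.7) becomes: choose a set \(S\subset D_g\) with \(\operatorname{card}S\le N\), and choose the values \(z_\tau(h)\) for \(\tau\in S\), so as to minimize \(\sup_\tau\|z_\tau(f)-z_\tau(h)\|\). On \(S\) the best choice is \(z_\tau(h)=z_\tau(f)\), which gives error zero there. Hence the optimal error is \(\inf_{\operatorname{card}S\le N}\sup_{\tau\notin S}q_f(\tau)\).

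For the upper bound, fix \(r>q^*_{N+1}(f)\). The set \(\{q_f>r\}\) has at most \(N\) elements, so choosing \(S\) to be this set gives error at most \(r\). For the lower bound, fix any \(S\) with \(\operatorname{card}S\le N\) and put \(r=\sup_{\tau\notin S}q_f(\tau)\). Then \(\{q_f>r\}\subset S\), so \(r\) belongs to the set in (3.6), and therefore \(r\ge q^*_{N+1}(f)\).

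\emph{Part (ii).} By Theorem~\ref{thm:pure-jump-characterization}, \(P_Ef\) is super-\(g\)-H\"older with the jumps of \(f\) on \(E\) and zero jumps elsewhere. Then (3.4) applied to \(f-P_Ef\) gives (3.9). For (3.10), write \(f(t)-P_Ef(t)\) as the absolutely convergent sum of the jumps \(\Delta^+f(\tau)\) with \(\tau\in D_g\setminus E\) and \(\tau<t\). The triangle inequality and \(\|\Delta^+f(\tau)\|\le[f]_{\alpha,g}m_\tau^\alpha\) then give the bound.

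Uniform density follows by taking finite sets \(E\) increasing to \(D_g\): the tail of \(\sum m_\tau^\alpha\) tends to zero because this sum is at most \(M_g^\alpha<\infty\).

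For norm density, the distance from \(f\) to finite-jump functions is \(\inf_N q^*_{N+1}(f)\); approximants with \(h(a)\ne f(a)\) only add to the norm, so it suffices to take \(h(a)=f(a)\). This infimum is zero exactly when each set \(\{q_f\ge\varepsilon\}\) is finite, which is condition (3.11). In coordinates, (3.11) says precisely that \(Z_{\alpha,g}(f)\in c_0\). Hence the norm closure is \(\R^d\oplus_1c_0(D_g;\R^d)\).

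\emph{Part (iii).} This is the main step. For \(f\in\mathcal H^0_{\beta,g}\) with \([f]_{\beta,g}\le1\), Theorem~\ref{thm:pure-jump-characterization} gives \(q_f(\tau)\le m_\tau^{\beta-\alpha}\). In particular \(f\in\mathcal H_{\alpha,g}\), since \(m_\tau\le M_g\).

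\emph{Upper bound.} Let \(S=\{\tau_1^*,\dots,\tau_N^*\}\), or all of \(D_g\) if \(D_g\) has fewer than \(N\) points. Every \(\tau\notin S\) has \(m_\tau\le m_{N+1}^*\), so \(q_f(\tau)\le(m_{N+1}^*)^{\beta-\alpha}\). By part (i) this bounds \(\mathcal E_N^{(\alpha)}(f)\).

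\emph{Lower bound.} Take the extremal function with \(\Delta^+f(\tau)=m_\tau^\beta v\) for a fixed unit vector \(v\). Then \(q_f(\tau)=m_\tau^{\beta-\alpha}\), and \([f]_{\beta,g}=1\) when \(D_g\neq\varnothing\); if \(D_g=\varnothing\) both sides of (3.12) are zero. The key check is that \(q^*_{N+1}(f)\) equals the \((N+1)\)-st largest value of \(q_f\), namely \((m_{N+1}^*)^{\beta-\alpha}\).

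This needs care because of ties and the ordering. The decreasing enumeration of the masses exists because \(m_\tau\to0\) along \(D_g\): for every \(\varepsilon>0\), only finitely many atoms have mass exceeding \(\varepsilon\). Consequently, for any \(r<(m_{N+1}^*)^{\beta-\alpha}\), the set \(\{q_f>r\}\) contains the \(N+1\) points \(\tau_1^*,\dots,\tau_{N+1}^*\), so \(r\) is excluded from (3.6). Hence \(q^*_{N+1}(f)\ge(m_{N+1}^*)^{\beta-\alpha}\). If \(D_g\) has at most \(N\) points, both sides of (3.12) are zero.

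Finally, (3.13) follows from \((N+1)m_{N+1}^*\le\sum_{k\le N+1}m_k^*\le M_g\) and monotonicity of \(x\mapsto x^{\beta-\alpha}\).
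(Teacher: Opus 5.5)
Your proof is correct and follows essentially the same route as the paper: you reduce to the normalized jump sequence via the isometry, identify $\mathcal E_N^{(\alpha)}(f)$ with $\inf_{\operatorname{card}S\le N}\sup_{\tau\notin S}q_f(\tau)$, read off (ii) from the pure-jump representation, and prove (iii) by keeping the $N$ largest atoms and testing an extremal function. The differences are cosmetic: you get the upper bound in (i) by letting $r\downarrow q^*_{N+1}(f)$, whereas the paper shows that $E_*=\{q_f>q^*_{N+1}(f)\}$ has at most $N$ points, which also gives that $P_{E_*}f$ is an actual minimizer; and your extremal function in (iii) jumps by $m_\tau^\beta v$ at every atom, where the paper's jumps only at $\tau_1^*,\dots,\tau_{N+1}^*$.
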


\begin{proof}
By Theorem~\ref{thm:jump-space-isometry}, approximation in \([\cdot]_{\alpha,g}\) is equivalent to approximation of the normalized jump sequence in the \(\ell^\infty\) norm.

For (i), the threshold definition in (3.6) is equivalent to
\[
q^*_{N+1}(f)
=
\inf_{\substack{E\subset D_g\\ \operatorname{card}(E)\le N}}
\sup_{\tau\in D_g\setminus E}q_f(\tau).
\tag{3.14}
\]
Indeed, for such a set \(E\), put \(r_E:=\sup_{\tau\in D_g\setminus E}q_f(\tau)\). Then every \(\tau\in D_g\) with \(q_f(\tau)>r_E\) belongs to \(E\), so \(q^*_{N+1}(f)\le r_E\). Conversely, set \(E_*:=\{\tau\in D_g:q_f(\tau)>q^*_{N+1}(f)\}\). Suppose \(\operatorname{card}(E_*)>N\). Choose distinct \(\tau_1,\ldots,\tau_{N+1}\in E_*\). Since \(q_f(\tau_j)>q^*_{N+1}(f)\) for every \(j\), there exists \(\delta>0\) such that \(q_f(\tau_j)>q^*_{N+1}(f)+\delta\) for all \(j\). By the definition of the infimum in (3.6), there exists \(r<q^*_{N+1}(f)+\delta\) with \(\operatorname{card}\{\tau:q_f(\tau)>r\}\le N\). But \(\tau_1,\ldots,\tau_{N+1}\) all belong to this set, a contradiction. Hence \(\operatorname{card}(E_*)\le N\), while \(\sup_{\tau\in D_g\setminus E_*}q_f(\tau)\le q^*_{N+1}(f)\), proving (3.14).

Let \(h\in\mathcal H_{\alpha,g}(I;\R^d)\) have at most \(N\) nonzero jumps and satisfy \(h(a)=f(a)\), and let \(E\subset D_g\) be its jump set. At each \(\tau\in D_g\setminus E\), one has \(\Delta^+h(\tau)=0\); hence Theorem~\ref{thm:pure-jump-characterization} gives \([f-h]_{\alpha,g}\ge\sup_{\tau\in D_g\setminus E}q_f(\tau)\). Taking the infimum and using (3.14) yields \(\mathcal E_N^{(\alpha)}(f)\ge q^*_{N+1}(f)\). Taking \(h=P_{E_*}f\) gives the reverse inequality and proves (3.8).

For (ii), Theorem~\ref{thm:pure-jump-characterization} shows that \(P_Ef\in\mathcal H_{\alpha,g}(I;\R^d)\) and that the jumps of \(f-P_Ef\) are precisely those of \(f\) on \(D_g\setminus E\). Hence (3.9) follows directly, while the pure-jump representation gives (3.10). Since \(\sum_{\tau\in D_g}\|\Delta^+f(\tau)\|<\infty\), the right-hand side of (3.10) tends to zero along any increasing sequence of finite subsets of \(D_g\) whose union is \(D_g\). Hence finite-jump functions are uniformly dense. The lower bound used in part (i), together with (3.9), shows that approximation in \(\|\cdot\|_{\alpha,g}\) is possible exactly under condition (3.11).

For (iii), let \(f\in\mathcal H_{\beta,g}^{0}(I;\R^d)\) satisfy \([f]_{\beta,g}\le1\). For each \(\tau\in D_g\), Theorem~\ref{thm:pure-jump-characterization} gives \(q_f(\tau)\le m_\tau^{\beta-\alpha}\); in particular \(f\in\mathcal H_{\alpha,g}^{0}(I;\R^d)\). Retaining the \(N\) jump points with largest atom masses and applying part (i) yields \(\mathcal E_N^{(\alpha)}(f)\le(m_{N+1}^*)^{\beta-\alpha}\).

If \(D_g\) has at most \(N\) jump points, then \(m_{N+1}^*=0\) and the equality in (3.12) is immediate. Otherwise, choose \(v\in\R^d\) with \(\|v\|=1\) and let \(f_N\in\mathcal H_{\beta,g}^{0}(I;\R^d)\) have jumps \(\Delta^+f_N(\tau_k^*)=(m_k^*)^\beta v\) for \(k=1,\ldots,N+1\), and zero jumps elsewhere. Theorem~\ref{thm:pure-jump-characterization} gives \([f_N]_{\beta,g}=1\). Every approximant with at most \(N\) nonzero jumps leaves at least one of these \(N+1\) jumps unmatched, and therefore \(\mathcal E_N^{(\alpha)}(f_N)\ge(m_{N+1}^*)^{\beta-\alpha}\). This proves (3.12). Finally, since \(\sum_km_k^*\le M_g\), we have \((N+1)m_{N+1}^*\le M_g\), which gives (3.13).
\end{proof}

\paragraph{Remark.}
The two approximation statements in Theorem~\ref{thm:best-N-jump} are genuinely different. Every super-\(g\)-H\"older function can be approximated uniformly by finite-jump functions because its jump amplitudes are absolutely summable. Approximation in the natural super-\(g\)-H\"older norm is more restrictive: it holds exactly when the normalized jump sequence vanishes at infinity, that is, when
\[
\left(\frac{\Delta^+f(\tau)}{m_\tau^\alpha}\right)_{\tau\in D_g}
\in c_0(D_g;\R^d).
\]

\begin{corollary}[Compact embedding between H\"older exponents and finite-jump closure]
\label{cor:compact-embedding-finite-jump-closure}
Let \(1<\alpha<\beta\). The natural inclusion
\[
J_{\beta,\alpha}:\mathcal H_{\beta,g}^{0}(I;\R^d)
\longrightarrow
\mathcal H_{\alpha,g}^{0}(I;\R^d),
\qquad
J_{\beta,\alpha}f=f,
\]
is compact; equivalently, every bounded sequence in \(\mathcal H_{\beta,g}^{0}(I;\R^d)\) has a subsequence converging in \(\mathcal H_{\alpha,g}^{0}(I;\R^d)\).

Let \(\mathcal F_{\alpha,g}^{0}(I;\R^d)\) be the set of all \(f:I\to\R^d\) in \(\mathcal H_{\alpha,g}^{0}(I;\R^d)\) whose jump set \(\{\tau\in D_g:\Delta^+f(\tau)\ne0\}\) is finite, and set
\[
\mathfrak h_{\alpha,g}^{0}(I;\R^d)
:=
\overline{\mathcal F_{\alpha,g}^{0}(I;\R^d)}^{\,\mathcal H_{\alpha,g}^{0}}.
\]
Then
\[
T_{\alpha,g}\bigl(\mathfrak h_{\alpha,g}^{0}(I;\R^d)\bigr)
=
c_0(D_g;\R^d),
\]
Moreover,
\[
\overline{J_{\beta,\alpha}\bigl(\mathcal H_{\beta,g}^{0}(I;\R^d)\bigr)}^{\,\mathcal H_{\alpha,g}^{0}}
=
\mathfrak h_{\alpha,g}^{0}(I;\R^d).
\]
\end{corollary}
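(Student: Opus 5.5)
We transfer everything to sequence spaces through the isometries \(T_{\alpha,g}\) and \(T_{\beta,g}\) of Theorem~\ref{thm:jump-space-isometry}. For \(f\in\mathcal H_{\beta,g}^{0}(I;\R^d)\) the normalized jumps satisfy \(z^{(\alpha)}_\tau(f)=m_\tau^{\beta-\alpha}z^{(\beta)}_\tau(f)\). Hence
\[
T_{\alpha,g}\circ J_{\beta,\alpha}\circ T_{\beta,g}^{-1}=M,
\qquad (M\xi)(\tau):=m_\tau^{\beta-\alpha}\xi(\tau),
\]
where \(M\) acts on \(\ell^\infty(D_g;\R^d)\). Compactness of \(J_{\beta,\alpha}\) is therefore equivalent to compactness of the diagonal operator \(M\) with weights \(w_\tau=m_\tau^{\beta-\alpha}\).

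Enumerate the atoms by decreasing mass as in Theorem~\ref{thm:best-N-jump}(iii). Let \(M_N\) be the truncation that keeps only the coordinates \(\tau_1^*,\dots,\tau_N^*\). Each \(M_N\) has finite rank, and its operator-norm error is
\[
\|M-M_N\|=\sup_{k>N}(m_k^*)^{\beta-\alpha}=(m_{N+1}^*)^{\beta-\alpha}.
\]
By (3.13) this error is at most \((M_g/(N+1))^{\beta-\alpha}\), which tends to \(0\). A norm limit of finite-rank operators is compact, so \(J_{\beta,\alpha}\) is compact. This is exactly the content of (3.12) read at the operator level. The sequential formulation follows by the standard equivalence; if one prefers a direct argument, a diagonal argument on the finitely many leading coordinates combined with the uniform tail bound \((m_{N+1}^*)^{\beta-\alpha}\sup_n[f_n]_{\beta,g}\) gives a Cauchy subsequence in \(\mathcal H_{\alpha,g}^0\). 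The finite case \(D_g\) finite is trivial, since the spaces are then finite dimensional.

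For the identification \(T_{\alpha,g}(\mathfrak h_{\alpha,g}^{0})=c_0(D_g;\R^d)\), note that \(T_{\alpha,g}(\mathcal F_{\alpha,g}^0)\) is exactly the set of finitely supported sequences. Since \(T_{\alpha,g}\) is an isometric isomorphism, it maps closures to closures. The closure of the finitely supported sequences in \(\ell^\infty\) is \(c_0\): given \(\xi\in c_0\), truncate \(\xi\) to the finite set \(\{\|\xi\|\ge\varepsilon\}\); conversely, \(c_0\) is closed, and a uniform limit of finitely supported sequences is small outside finite sets. This is also the \(d=0\)-initial-value case of Theorem~\ref{thm:best-N-jump}(ii).

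For the last equality, first take \(f\in\mathcal H_{\beta,g}^0\). Then \(\|z^{(\alpha)}_\tau(f)\|\le m_\tau^{\beta-\alpha}[f]_{\beta,g}\). Since \(\sum m_\tau<\infty\), only finitely many \(m_\tau\) exceed any given positive level, so \(Z_{\alpha,g}(f)\in c_0\). This gives \(J_{\beta,\alpha}(\mathcal H_{\beta,g}^0)\subset\mathfrak h_{\alpha,g}^0\), and the closure inclusion follows because \(\mathfrak h_{\alpha,g}^0\) is closed. Conversely, every finite-jump function lies in \(\mathcal H_{\beta,g}^0\): by the converse part of Theorem~\ref{thm:pure-jump-characterization}, condition (3.3) with exponent \(\beta\) is a supremum over finitely many positive-mass atoms, so it is finite. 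Hence \(\mathcal F_{\alpha,g}^0\subset J_{\beta,\alpha}(\mathcal H_{\beta,g}^0)\), and taking closures yields the reverse inclusion. The only delicate point I anticipate is keeping the bookkeeping of the intertwining identity straight, namely that the inclusion truly corresponds to \(M\) and not to its inverse. Everything else reduces to the stated isometries and (3.13).
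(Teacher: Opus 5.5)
Your proposal is correct and follows essentially the same route as the paper: you conjugate $J_{\beta,\alpha}$ through $T_{\beta,g}$ and $T_{\alpha,g}$ into the diagonal operator with weights $m_\tau^{\beta-\alpha}$, and you approximate it in operator norm by finite-rank truncations to the $N$ largest atoms, with error $(m^*_{N+1})^{\beta-\alpha}\to0$. You then identify the closure of the finitely supported sequences with $c_0(D_g;\R^d)$, and you obtain the last equality from the two inclusions $J_{\beta,\alpha}(\mathcal H^0_{\beta,g})\subset\mathfrak h^0_{\alpha,g}$ and $\mathcal F^0_{\alpha,g}\subset J_{\beta,\alpha}(\mathcal H^0_{\beta,g})$, exactly as the paper does, only with slightly more explicit detail (the rate via (3.13) and the optional diagonal-subsequence argument).
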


\begin{proof}
Under the sequence-space isometries of Theorem~\ref{thm:jump-space-isometry}, the inclusion \(J_{\beta,\alpha}\) becomes the diagonal operator determined by
\[
T_{\alpha,g}(J_{\beta,\alpha}f)(\tau)
=
m_\tau^{\beta-\alpha}T_{\beta,g}(f)(\tau),
\qquad \tau\in D_g.
\]
Order the jump points so that the corresponding masses satisfy \(m_1^*\ge m_2^*\ge\cdots\), and let \(J_{\beta,\alpha}^{(N)}\) denote the truncation to the first \(N\) entries of the jump sequence. Then
\[
\left\|J_{\beta,\alpha}-J_{\beta,\alpha}^{(N)}\right\|
=
(m_{N+1}^*)^{\beta-\alpha}\longrightarrow0.
\]
Thus the compact embedding is the operator-norm limit of finite-rank atomic truncations.

Under \(T_{\alpha,g}\), finite-jump functions correspond exactly to finitely supported elements of \(\ell^\infty(D_g;\R^d)\). Their closure is \(c_0(D_g;\R^d)\), which proves the first identity.

If \(f\in\mathcal H_{\beta,g}^{0}(I;\R^d)\), then its image in the \(\alpha\)-space is \(m_\tau^{\beta-\alpha}T_{\beta,g}(f)(\tau)\), which belong to \(c_0(D_g;\R^d)\). Conversely, every finite-jump function in \(\mathcal H_{\alpha,g}^{0}(I;\R^d)\) also belongs to \(\mathcal H_{\beta,g}^{0}(I;\R^d)\). Therefore the closure of \(J_{\beta,\alpha}(\mathcal H_{\beta,g}^{0}(I;\R^d))\) in \(\mathcal H_{\alpha,g}^{0}(I;\R^d)\) is exactly \(\mathfrak h_{\alpha,g}^{0}(I;\R^d)\).
\end{proof}

\begin{example}[An infinite atomic super-\(g\)-H\"older function]
\label{ex:infinite-atomic-super-holder}
Let \(I=[0,1]\), let \(d\in\mathbb N\), and choose \(v\in\R^d\) with \(\|v\|=1\). For \(k\in\mathbb N\), set \(\tau_k:=1-2^{-k}\), and define the left-continuous nondecreasing Stieltjes derivator \(g:I\to\R\) by
\[
g(t):=\sum_{\tau_k<t}2^{-k}.
\]
Then \(D_g=\{\tau_k:k\in\mathbb N\}\), \(\Delta^+g(\tau_k)=2^{-k}\), and the atoms accumulate at \(1\), while \(\sum_{k\ge1}\Delta^+g(\tau_k)=1\).

Fix \(\alpha>1\) and define \(f:I\to\R^d\) by
\[
f(t):=\sum_{\tau_k<t}2^{-k\alpha}v.
\]
Since \(\Delta^+f(\tau_k)=2^{-k\alpha}v=[\Delta^+g(\tau_k)]^\alpha v\) for every \(k\in\mathbb N\), Theorem~\ref{thm:pure-jump-characterization} gives \([f]_{\alpha,g}=1\). Thus \(f\) is nonconstant and has infinitely many jumps, although \(g(1)-g(0)=1\); moreover, \(f(1)-f(0)=(2^\alpha-1)^{-1}v\).

For \(N\in\mathbb N\), let \(E_N:=\{\tau_1,\ldots,\tau_N\}\) and \(f_N:=P_{E_N}f\). Since all omitted jumps point in the same direction,
\[
\|f-f_N\|_\infty
=
\frac{2^{-N\alpha}}{2^\alpha-1}
\longrightarrow0,
\]
whereas
\[
[f-f_N]_{\alpha,g}=1
\]
for every \(N\in\mathbb N\). Hence finite-jump truncations converge uniformly to \(f\), but not in the super-\(g\)-H\"older norm. In particular, finite Stieltjes mass and super-\(g\)-H\"older regularity alone do not force finitely many nonzero state jumps.
\end{example}

\begin{example}[Higher regularity and finite-jump approximation rate]
\label{ex:higher-regularity-finite-jump-rate}
Keep the clock \(g\) and atoms \((\tau_k)_{k\in\mathbb N}\) from Example~\ref{ex:infinite-atomic-super-holder}. Let \(\beta>\alpha>1\), choose \(v\in\R^d\) with \(\|v\|=1\), and define \(f:I\to\R^d\) by \(f(t):=\sum_{\tau_k<t}2^{-k\beta}v\). Then \(\Delta^+f(\tau_k)=2^{-k\beta}v\), so \([f]_{\beta,g}=1\) and \(q_f(\tau_k)=2^{-k(\beta-\alpha)}\).

For \(E_N:=\{\tau_1,\ldots,\tau_N\}\) and \(f_N:=P_{E_N}f\), Theorem~\ref{thm:best-N-jump} gives
\[
[f-f_N]_{\alpha,g}
=
2^{-(N+1)(\beta-\alpha)}
\longrightarrow0.
\]
Thus the additional exponent \(\beta-\alpha\) exactly determines the convergence rate of finite-jump approximation in the weaker super-\(g\)-H\"older norm.
\end{example}

\section{Atomic structure of super-\(g\)-H\"older Stieltjes dynamics}

A recent complementary approach connects Stieltjes differential equations with ordinary differential equations through a generalized-inverse change of variables \cite{AdrianTojo2025Connection}. Here we keep the Stieltjes clock explicit and use the super-\(g\)-H\"older assumption to identify the continuous and jump contributions to the solution.

Throughout this section, \(g:I\to\R\) denotes the fixed Stieltjes derivator introduced in Section~2; for \(\tau\in D_g\), \(m_\tau=\Delta^+g(\tau)\) denotes the corresponding atomic mass. From this point onward, the state space is \(\R^n\). Let \(F:I\times\R^n\to\R^n\) be a single-valued vector field and consider
\[
x'_g(t)=F(t,x(t)),\qquad x(a)=x_0\in\R^n.
\tag{4.1}
\]
By a solution we mean a \(g\)-absolutely continuous function \(x:I\to\R^n\) satisfying (4.1) \(\mu_g\)-almost everywhere together with the initial condition. No separate existence theorem is applied here. Fix \(L>0\) and \(\alpha>1\). Because \(g\) is left-continuous, \(x(\tau)\) is the state before the jump and \(x(\tau+)\) the state after the jump, so
\[
\Delta^+x(\tau)=x(\tau+)-x(\tau).
\tag{4.2}
\]

\begin{theorem}[Atomic representation of super-\(g\)-H\"older solutions]
\label{thm:atomic-dynamics}
Let \(g:I\to\R\) be left-continuous and nondecreasing, let \(\alpha>1\) and \(L>0\), and let \(F:I\times\R^n\to\R^n\). Suppose \(x:I\to\R^n\) satisfies \(x(a)=x_0\). The following are equivalent.
\begin{enumerate}[label=\textup{(\roman*)}]
\item \(x\) is a solution of (4.1) and \([x]_{\alpha,g}\le L\).
\item The vector field vanishes along the trajectory on the continuous part of the clock,
\[
F(t,x(t))=0
\qquad\text{for }\mu_g^c\text{-almost every }t\in I,
\tag{4.3}
\]
and, for every \(t\in I\),
\[
x(t)
=
x_0+
\sum_{\substack{\tau\in D_g\\ \tau<t}}
 m_\tau F(\tau,x(\tau)),
\tag{4.4}
\]
and
\[
\sup_{\tau\in D_g}
\frac{\|F(\tau,x(\tau))\|}{m_\tau^{\alpha-1}}
\le L.
\tag{4.5}
\]
\end{enumerate}
Whenever these conditions hold,
\[
[x]_{\alpha,g}
=
\sup_{\tau\in D_g}
\frac{\|F(\tau,x(\tau))\|}{m_\tau^{\alpha-1}},
\]
with the usual convention that the supremum is zero if \(D_g=\varnothing\). In particular, at every jump point \(\tau\in D_g\), equivalently at every atom of \(\mu_g\),
\[
\Delta^+x(\tau)=m_\tau F(\tau,x(\tau)).
\]
Thus, in the super-\(g\)-H\"older regime, the Stieltjes equation is equivalent to vanishing of the vector field along the trajectory on the nonatomic part of the clock together with the jump recurrence (4.4).
\end{theorem}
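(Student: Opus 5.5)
The plan is to reduce everything to Theorem~\ref{thm:pure-jump-characterization} and Proposition~\ref{prop:continuous-extinction}, using the identification of the Stieltjes derivative at atoms given by (2.12).

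\emph{(i)\(\Rightarrow\)(ii).} Let \(x\) solve (4.1) with \([x]_{\alpha,g}\le L\). Then \(x\) is super-\(g\)-H\"older, so Proposition~\ref{prop:continuous-extinction} gives \(x'_g=0\) \(\mu_g^c\)-almost everywhere. The equation holds \(\mu_g\)-a.e., hence \(\mu_g^c\)-a.e. since \(\mu_g^c\le\mu_g\). Combining the two statements yields (4.3). At each atom \(\tau\in D_g\) we have \(\mu_g(\{\tau\})=m_\tau>0\), so the equation must hold at \(\tau\) itself. By (2.12) this gives \(\Delta^+x(\tau)=m_\tau F(\tau,x(\tau))\); the right limit exists by the proof of Theorem~\ref{thm:pure-jump-characterization}. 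Substituting this into the representation (3.2) yields (4.4). Dividing by \(m_\tau^\alpha\) in (3.4) gives
\[
[x]_{\alpha,g}=\sup_{\tau\in D_g}\frac{\|F(\tau,x(\tau))\|}{m_\tau^{\alpha-1}},
\]
and this proves (4.5) together with the seminorm identity.

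\emph{(ii)\(\Rightarrow\)(i).} Set \(v_\tau:=m_\tau F(\tau,x(\tau))\). Then (4.4) is exactly (3.2), and (4.5) gives (3.3) with constant \(\le L\). The converse part of Theorem~\ref{thm:pure-jump-characterization} shows that \(x\) is super-\(g\)-H\"older with \([x]_{\alpha,g}\le L\), that \(\Delta^+x(\tau)=v_\tau\), and that the seminorm formula holds. By Proposition~\ref{prop:holder-implies-gac}, \(x\) is \(g\)-absolutely continuous.

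It remains to verify the equation \(\mu_g\)-a.e. We split \(\mu_g=\mu_g^c+\mu_g^a\). On \(D_g\), (2.12) gives \(x'_g(\tau)=v_\tau/m_\tau=F(\tau,x(\tau))\) at every atom. For the nonatomic part, Proposition~\ref{prop:continuous-extinction} gives \(x'_g=0\) \(\mu_g^c\)-a.e., and (4.3) gives \(F(t,x(t))=0\) \(\mu_g^c\)-a.e., so the two agree there.

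The main obstacle is the measure-theoretic bookkeeping in this last step: showing that "\(\mu_g\)-a.e." splits correctly into "\(\mu_g^c\)-a.e. off \(D_g\)" plus "every point of \(D_g\)". The plan is to take any \(\mu_g\)-null set \(N\) off which the statement should hold and observe two facts. First, \(N\cap D_g=\varnothing\), because each atom carries positive mass. Second, a set is \(\mu_g\)-null exactly when it is \(\mu_g^c\)-null and avoids \(D_g\). The exceptional sets from Proposition~\ref{prop:continuous-extinction} and from (4.3) are \(\mu_g^c\)-null. Each can be intersected with \(I\setminus D_g\), where \(\mu_g^a\) vanishes, making them \(\mu_g\)-null. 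Their union is then the \(\mu_g\)-null exceptional set required for (4.1).
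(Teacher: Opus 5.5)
Your proposal is correct and follows essentially the same route as the paper: both directions reduce to Theorem~\ref{thm:pure-jump-characterization} and Propositions~\ref{prop:holder-implies-gac}--\ref{prop:continuous-extinction}, with the atomic formula (2.12) handling the jump points. The differences are cosmetic. In (ii)\(\Rightarrow\)(i) you invoke the converse part of Theorem~\ref{thm:pure-jump-characterization} rather than re-deriving \(\|x(t)-x(s)\|\le L\sum m_\tau^\alpha\le L|g(t)-g(s)|^\alpha\) inline, and you make explicit the \(\mu_g\)-null-set bookkeeping (such sets avoid \(D_g\) and are \(\mu_g^c\)-null) that the paper leaves implicit.
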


\begin{proof}
Assume (i). Proposition~\ref{prop:continuous-extinction} gives \(x'_g=0\) for \(\mu_g^c\)-almost every \(t\in I\). Since \(x'_g(t)=F(t,x(t))\) \(\mu_g\)-almost everywhere, (4.3) follows. At each \(\tau\in D_g\), the differential equation also holds because \(\mu_g(\{\tau\})=m_\tau>0\); the atomic derivative formula therefore gives the atomic relation stated above. Theorem~\ref{thm:pure-jump-characterization} then yields the recurrence (4.4) and the exact seminorm identity, from which (4.5) follows.

Conversely, assume (ii). By (4.5), the series in (4.4) is absolutely convergent, since \(\sum_{\tau\in D_g}m_\tau^\alpha<\infty\). For \(s,t\in I\) with \(s<t\),
\[
\|x(t)-x(s)\|
\le
L\sum_{\substack{\tau\in D_g\\s\le\tau<t}}m_\tau^\alpha
\le
L\left(\sum_{\substack{\tau\in D_g\\s\le\tau<t}}m_\tau\right)^\alpha
\le
L|g(t)-g(s)|^\alpha.
\]
Hence \(x\) is super-\(g\)-H\"older, and Proposition~\ref{prop:holder-implies-gac} gives \(g\)-absolute continuity. Absolute convergence of (4.4) gives the atomic relation stated in the theorem, and therefore the differential equation holds at every atom of \(\mu_g\). On the nonatomic part, Proposition~\ref{prop:continuous-extinction} gives \(x'_g=0\) \(\mu_g^c\)-almost everywhere, while (4.3) gives \(F(t,x(t))=0\) there. Thus (4.1) holds \(\mu_g\)-almost everywhere. Finally, Theorem~\ref{thm:pure-jump-characterization} gives the asserted exact seminorm identity.
\end{proof}

For a solution \(x:I\to\R^n\) of (4.1), a clock jump changes the state exactly when \(\Delta^+x(\tau)\ne0\), equivalently when \(F(\tau,x(\tau))\ne0\). If \(F(\tau,x(\tau))=0\), the clock jumps but the state does not.

\begin{corollary}[Lower bounds for nonzero state jumps and their number]
\label{cor:active-lower-bound}
\label{cor:finite-active-capacity}
\label{cor:purely-atomic-clock}
Let \(x:I\to\R^n\) satisfy the equivalent conditions of Theorem~\ref{thm:atomic-dynamics}.
\begin{enumerate}[label=\textup{(\roman*)}]
\item Every nonzero state jump point satisfies the local lower bound
\[
m_\tau
\ge
\left(
\frac{\|F(\tau,x(\tau))\|}{L}
\right)^{1/(\alpha-1)}.
\tag{4.6}
\]
\item Suppose there is \(c>0\), independent of \(\tau\), such that \(\|F(\tau,x(\tau))\|\ge c\) at every nonzero state jump point. Then every nonzero state jump point has mass at least
\[
m_*:=\left(\frac{c}{L}\right)^{1/(\alpha-1)},
\tag{4.7}
\]
and, with
\[
M:=\mu_g([a,b))=g(b)-g(a),
\tag{4.8}
\]
\[
\operatorname{card}\{\tau\in D_g:\Delta^+x(\tau)\ne0\}
\le
\left\lfloor
M\left(\frac{L}{c}\right)^{1/(\alpha-1)}
\right\rfloor.
\tag{4.9}
\]
\item If, more strongly,
\[
\|F(t,x(t))\|\ge c
\qquad\text{for every }t\in I,
\tag{4.10}
\]
then the Stieltjes measure is purely atomic, \(\mu_g^c=0\), every jump point of \(g\) changes the state, and
\[
\operatorname{card}(D_g)
\le
\left\lfloor
M\left(\frac{L}{c}\right)^{1/(\alpha-1)}
\right\rfloor.
\tag{4.11}
\]
\end{enumerate}
\end{corollary}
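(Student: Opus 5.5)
The plan is to read everything off the exact seminorm identity and the atomic relation of Theorem~\ref{thm:atomic-dynamics}, combined with the elementary fact that the atom masses sum to at most \(M\).

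For (i), take a nonzero state jump point \(\tau\). By Theorem~\ref{thm:atomic-dynamics}, \(\Delta^+x(\tau)=m_\tau F(\tau,x(\tau))\), so \(F(\tau,x(\tau))\ne0\). Condition (4.5) gives \(\|F(\tau,x(\tau))\|\le L\,m_\tau^{\alpha-1}\). Since \(\alpha>1\), the map \(s\mapsto s^{1/(\alpha-1)}\) is increasing on \([0,\infty)\), and applying it to both sides yields (4.6). This inequality holds trivially at jump points where \(F\) vanishes, but the statement only requires it at nonzero state jumps.

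For (ii), inserting \(\|F(\tau,x(\tau))\|\ge c\) into (4.6) gives \(m_\tau\ge m_*\) at every point of \(S:=\{\tau\in D_g:\Delta^+x(\tau)\ne0\}\). For the count, I would take any finite subset \(S'\subset S\) and estimate
\[
\operatorname{card}(S')\,m_*\le\sum_{\tau\in S'}m_\tau\le\sum_{\tau\in D_g}\Delta^+g(\tau)\le g(b)-g(a)=M,
\]
using the bound on the atomic mass from Section~2. Hence \(\operatorname{card}(S')\le M/m_*=M(L/c)^{1/(\alpha-1)}\). Every finite subset of \(S\) obeys this uniform bound, so \(S\) is finite and satisfies the same bound. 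Since the cardinality is an integer, the floor in (4.9) follows.

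For (iii), I would first use (4.3): \(F(t,x(t))=0\) for \(\mu_g^c\)-a.e.\ \(t\). Under (4.10), the set \(\{t:F(t,x(t))=0\}\) is empty, so it cannot carry the full \(\mu_g^c\)-measure unless \(\mu_g^c(I)=0\). This is the one step needing care. If \(\mu_g^c(I)>0\), then (4.3) says that the exceptional set where \(F(t,x(t))\ne0\) is \(\mu_g^c\)-null. That exceptional set is all of \(I\), which gives \(\mu_g^c(I)=0\), a contradiction. Therefore \(\mu_g^c=0\). Next, every \(\tau\in D_g\) satisfies \(\Delta^+x(\tau)=m_\tau F(\tau,x(\tau))\ne0\), because \(m_\tau>0\) and \(\|F\|\ge c>0\). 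Thus \(D_g\) coincides with the set of nonzero state jump points, and (4.11) follows directly from (4.9). The main obstacle is therefore only the measure-theoretic reading of (4.3), together with passing from finite subsets to the full count when \(D_g\) might a priori be infinite; both are handled above.
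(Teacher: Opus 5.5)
Your proposal is correct and follows the paper's proof essentially step for step. Part (i) comes from rearranging (4.5). Part (ii) compares the uniform lower bound \(m_*\) with the total atomic mass \(M\); your finite-subset argument only spells out the paper's one-line finiteness claim. Part (iii) notes that (4.10) is incompatible with (4.3) unless \(\mu_g^c=0\), after which every atom changes the state and (4.9) gives (4.11).
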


\begin{proof}
Part (i) follows by rearranging (4.5) at a nonzero state jump point. Under the hypothesis of (ii), (4.6) gives \(m_\tau\ge m_*\) at every nonzero state jump point. Since the sum of all clock jump sizes does not exceed \(M\), the set of nonzero state jump points is finite and
\[
\operatorname{card}\{\tau\in D_g:\Delta^+x(\tau)\ne0\}\,m_*\le M,
\]
which gives (4.9).

Under (4.10), the lower bound \(\|F(t,x(t))\|\ge c>0\) is incompatible with (4.3) on any set of positive \(\mu_g^c\)-measure; hence \(\mu_g^c=0\). Moreover, \(F(\tau,x(\tau))\ne0\) at every \(\tau\in D_g\), so every clock jump changes the state, and (4.11) follows from part (ii).
\end{proof}

Thus a uniform lower bound on the vector field along nonzero state jumps converts the super-\(g\)-H\"older estimate into a positive minimum clock jump for every state change. Since the total Stieltjes mass is finite, the number of nonzero state jumps is then finite.

For later inverse arguments it is useful to record the corresponding zero set along the trajectory
\[
Z_x:=\{t\in I:F(t,x(t))=0\}.
\tag{4.12}
\]
Theorem~\ref{thm:atomic-dynamics} gives \(\mu_g^c(I\setminus Z_x)=0\), and every clock jump with zero state change belongs to \(Z_x\). The nonatomic Stieltjes mass that is not detected by the state is concentrated on \(Z_x\). Thus Stieltjes mass that does not affect the observed state can occur only on this zero set. If the nonzero state jump points are finite, \(\tau_1<\cdots<\tau_N\), then the state does not change between two successive nonzero state jumps: the continuous part produces no motion and the remaining clock jumps have zero state jump. Hence \(x(\tau_j)=x_{j-1}\), and (4.4) reduces to the recurrence
\[
x_j=x_{j-1}+m_jF(\tau_j,x_{j-1}),
\qquad j=1,\ldots,N,
\tag{4.13}
\]
where \(m_j=\Delta^+g(\tau_j)\) and \(x_j\) is the state immediately after the \(j\)-th nonzero state jump.

\section{Thresholds for the number of nonzero state jumps in affine dynamics}

This section specializes Section~4 to affine dynamics and determines how many nonzero state jumps can occur for a prescribed total Stieltjes mass \(M\).

Consider a solution \(x:I\to\R^n\) of
\[
x'_g(t)=\sigma x(t)+\eta,\qquad \sigma>0,\qquad \eta\in\R^n,
\tag{5.1}
\]
with \(x(a)=x_0\in\R^n\), and let \(x_*=-\eta/\sigma\). Assume \(x_0\ne x_*\), \([x]_{\alpha,g}\le L\), \(L>0\), \(\alpha>1\), and prescribe
\[
M=\mu_g([a,b))=g(b)-g(a)>0.
\tag{5.3}
\]

In Sections~5--6 the Stieltjes derivator is not fixed. We consider all left-continuous nondecreasing \(g:I\to\R\) with total Stieltjes mass \(M\) that support a solution satisfying the stated regularity bound. Set \(y(t):=x(t)-x_*\), so that \(y'_g(t)=\sigma y(t)\). If \(\tau_1<\tau_2<\cdots\) are the clock jump points, write \(m_j:=\Delta^+g(\tau_j)\), \(x_j:=x(\tau_j+)\), \(y_j:=x_j-x_*\), and \(R_j:=\|y_j\|\), with \(R_0=\|x_0-x_*\|>0\). Then
\[
R_j=R_0\prod_{k=1}^j(1+\sigma m_k).
\tag{5.8}
\]
Since every factor is larger than one, the affine vector field never vanishes along the trajectory. Corollary~\ref{cor:purely-atomic-clock} therefore gives \(\mu_g^c=0\), and every clock jump changes the state. At the \(j\)-th jump, the super-\(g\)-H\"older bound gives
\[
m_j^{\alpha-1}\ge\frac{\sigma R_0}{L}\prod_{k<j}(1+\sigma m_k).
\tag{5.10}
\]
Define recursively the minimal admissible jump sizes by
\[
\lambda_1:=\left(\frac{\sigma R_0}{L}\right)^{1/(\alpha-1)},
\tag{5.12}
\]
and
\[
\lambda_j:=\left[\frac{\sigma R_0}{L}\prod_{k<j}(1+\sigma\lambda_k)\right]^{1/(\alpha-1)},\qquad j\ge2.
\tag{5.13}
\]

\begin{proposition}[Recurrence for the minimal admissible jump sizes]
\label{prop:event-escalation}
The sequence \((\lambda_j)_{j\in\mathbb N}\) satisfies
\[
\lambda_{j+1}=\lambda_j(1+\sigma\lambda_j)^{1/(\alpha-1)},
\tag{5.14}
\]
and therefore
\[
0<\lambda_1<\lambda_2<\lambda_3<\cdots.
\tag{5.15}
\]
\end{proposition}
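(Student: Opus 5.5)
The plan is to raise the recursive definition (5.13) to the power \(\alpha-1\) and compare consecutive indices. Set
\[
P_j:=\prod_{k<j}(1+\sigma\lambda_k),
\]
with the empty product convention \(P_1=1\). With this convention, (5.12) and (5.13) read uniformly as
\[
\lambda_j^{\alpha-1}=\frac{\sigma R_0}{L}P_j,\qquad j\ge1.
\]
First I will note that all \(\lambda_j\) are well defined and strictly positive. This follows by induction: \(\sigma R_0/L>0\) because \(\sigma>0\), \(R_0>0\) and \(L>0\). If \(\lambda_1,\dots,\lambda_{j-1}>0\), then each factor satisfies \(1+\sigma\lambda_k>1\), so \(P_j>0\) and the positive real power \(1/(\alpha-1)\) of a positive number is well defined and positive.

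For the recurrence (5.14), I will use the telescoping identity \(P_{j+1}=P_j(1+\sigma\lambda_j)\). This gives
\[
\lambda_{j+1}^{\alpha-1}=\frac{\sigma R_0}{L}P_j(1+\sigma\lambda_j)=\lambda_j^{\alpha-1}(1+\sigma\lambda_j).
\]
Both sides are positive, so taking the \((\alpha-1)\)-th root, which is legitimate since \(\alpha>1\), yields \(\lambda_{j+1}=\lambda_j(1+\sigma\lambda_j)^{1/(\alpha-1)}\). The case \(j=1\) is covered by the same computation because \(P_1=1\) and \(P_2=1+\sigma\lambda_1\), which agrees with (5.12)--(5.13).

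Finally, the strict monotonicity (5.15) follows from (5.14). Since \(\lambda_j>0\) and \(\sigma>0\), we have \(1+\sigma\lambda_j>1\). Because \(1/(\alpha-1)>0\), this gives \((1+\sigma\lambda_j)^{1/(\alpha-1)}>1\), and hence \(\lambda_{j+1}>\lambda_j\).

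There is no real obstacle here. The only care needed is to treat the base case \(j=1\) consistently with the empty product, and to justify the root extraction through positivity, which is supplied by the inductive positivity step.
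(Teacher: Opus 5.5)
Your proof is correct and follows essentially the same route as the paper, which compares the defining identities for \(\lambda_{j+1}^{\alpha-1}\) and \(\lambda_j^{\alpha-1}\) to get \((\lambda_{j+1}/\lambda_j)^{\alpha-1}=1+\sigma\lambda_j\) and then observes that this factor exceeds one. Your empty-product convention and the inductive positivity check (using \(R_0>0\), which holds since \(x_0\ne x_*\)) make explicit what the paper leaves implicit.
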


\begin{proof}
Dividing the defining identities for \(\lambda_{j+1}^{\alpha-1}\) and \(\lambda_j^{\alpha-1}\) gives \((\lambda_{j+1}/\lambda_j)^{\alpha-1}=1+\sigma\lambda_j\), which is (5.14). Since \(\sigma>0\) and \(\lambda_j>0\), the multiplying factor is larger than one.
\end{proof}

Set \(S_0:=0\) and, for \(N\in\mathbb N\),
\[
S_N:=\sum_{j=1}^N\lambda_j.
\tag{5.17}
\]

\begin{theorem}[Exact threshold for \(N\) nonzero state jumps]
\label{thm:event-birth-thresholds}
Let \(\sigma>0\), \(\eta\in\R^n\), \(x_0\in\R^n\), and \(x_*=-\eta/\sigma\), with \(x_0\ne x_*\). Let \(\alpha>1\), \(L>0\), and \(M>0\). For \(N\in\mathbb N\), there exist a left-continuous nondecreasing \(g:I\to\R\) with \(g(b)-g(a)=M\) and a solution \(x:I\to\R^n\) of (5.1), satisfying \(x(a)=x_0\), \([x]_{\alpha,g}\le L\), and having exactly \(N\) nonzero state jumps if and only if
\[
M\ge S_N.
\tag{5.18}
\]
\end{theorem}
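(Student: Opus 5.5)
The argument splits into necessity and sufficiency, both resting on Theorem~\ref{thm:atomic-dynamics} and Corollary~\ref{cor:purely-atomic-clock}.

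\emph{Necessity.} Suppose $g$ and $x$ are given with exactly $N$ nonzero state jumps. Since $F(t,x)=\sigma(x-x_*)$ and $\|x(t)-x_*\|\ge R_0>0$ for all $t$ (by (5.8) the distance to $x_*$ never decreases), condition (4.10) holds with $c=\sigma R_0$. Hence $\mu_g^c=0$, and every jump point of $g$ changes the state. The clock therefore has exactly $N$ atoms $\tau_1<\dots<\tau_N$ with masses $m_1,\dots,m_N$ and $\sum_j m_j=M$.

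By (4.5), each jump must satisfy (5.10). We then show by induction that $m_j\ge\lambda_j$. For $j=1$ this is (5.10) directly. If $m_k\ge\lambda_k$ for all $k<j$, then
\[
m_j^{\alpha-1}\ge\frac{\sigma R_0}{L}\prod_{k<j}(1+\sigma m_k)\ge\frac{\sigma R_0}{L}\prod_{k<j}(1+\sigma\lambda_k)=\lambda_j^{\alpha-1},
\]
using that the right-hand side is increasing in each $m_k$. Summing gives $M=\sum_j m_j\ge S_N$.

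\emph{Sufficiency.} Assume $M\ge S_N$. Choose points $a<\tau_1<\dots<\tau_N<b$ and masses $m_j=\lambda_j$ for $j<N$ and $m_N=\lambda_N+(M-S_N)$. Define the pure-jump left-continuous clock $g(t)=\sum_{\tau_j<t}m_j$. Then $g(b)-g(a)=M$ and $\mu_g^c=0$.

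Define $x$ by the recurrence (4.13): $x$ is constant between atoms, and $y_j=(1+\sigma m_j)y_{j-1}$. Condition (4.3) is vacuous because $\mu_g^c=0$, and (4.4) holds by construction. It remains to check (4.5), namely
\[
\sigma R_0\prod_{k<j}(1+\sigma m_k)\le L\,m_j^{\alpha-1}\quad\text{for each }j.
\]
For $j<N$ this is an equality by the definition of $\lambda_j$. For $j=N$ the prefactor equals $L\lambda_N^{\alpha-1}$ exactly, since the earlier masses are the $\lambda_k$, and $m_N\ge\lambda_N$, so the inequality holds. Theorem~\ref{thm:atomic-dynamics} then gives that $x$ is a solution with $[x]_{\alpha,g}\le L$. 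Each jump is nonzero because $y_{j-1}\ne0$, so there are exactly $N$ nonzero state jumps.

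\emph{Main obstacle.} The delicate point is where to place the surplus mass $M-S_N$. It cannot go into an extra atom, since every atom changes the state and would create an $(N+1)$st jump. It cannot go into a continuous part either, since $\mu_g^c=0$ is forced. Putting it into the last atom resolves this, because increasing $m_N$ only relaxes the $N$th constraint and affects no later constraint. The other step needing care is the monotonicity used in the induction for necessity.
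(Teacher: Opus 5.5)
Your proof is correct and follows the same approach as the paper. Necessity comes from the inductive bound \(m_j\ge\lambda_j\) obtained from (5.10), once the clock is known to be purely atomic with every atom changing the state. Sufficiency comes from the explicit step clock with \(m_j=\lambda_j\) for \(j<N\) and the surplus placed in the last atom, checked against (4.5) via Theorem~\ref{thm:atomic-dynamics}; you simply make explicit some steps the paper's short proof leaves to the Section~5 preamble.
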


\begin{proof}
For any admissible jump sizes \((m_1,\ldots,m_N)\), inequalities (5.10) imply inductively that \(m_j\ge\lambda_j\) for every \(j\). Hence a solution with exactly \(N\) jumps satisfies \(M=\sum_{j=1}^Nm_j\ge S_N\).

Conversely, assume \(M\ge S_N\). Take \(m_j=\lambda_j\) for \(j=1,\ldots,N-1\) and \(m_N=M-\sum_{j=1}^{N-1}\lambda_j\ge\lambda_N\). Then all inequalities (5.10) hold. Choose \(a\le\tau_1<\cdots<\tau_N<b\), define \(g(t):=g(a)+\sum_{\tau_j<t}m_j\), and generate the states recursively by \(x_j=x_{j-1}+m_j(\sigma x_{j-1}+\eta)\). The resulting step trajectory satisfies the affine equation and the required super-\(g\)-H\"older bound. Since \(x_0\ne x_*\), every clock jump changes the state.
\end{proof}

For \(M>0\), define
\[
N_{\max}(M):=\max\{N\in\mathbb N_0:S_N\le M\}.
\tag{5.19}
\]
This maximum is finite because \(S_N\ge N\lambda_1\).

\begin{corollary}[Exact maximum number of nonzero state jumps]
\label{cor:Nmax}
For \(M>0\), \(N_{\max}(M)=0\) if and only if \(M<S_1\). More generally, for every \(N\in\mathbb N\),
\[
N_{\max}(M)=N\iff S_N\le M<S_{N+1}.
\tag{5.20}
\]
\end{corollary}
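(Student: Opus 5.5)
This follows directly from Theorem~\ref{thm:event-birth-thresholds} together with the strict monotonicity of the partial sums \(S_N\). By Proposition~\ref{prop:event-escalation} every \(\lambda_j>0\), so \(0=S_0<S_1<S_2<\cdots\). Moreover, \(S_N\ge N\lambda_1\to\infty\). Hence, for fixed \(M>0\), the set \(\{N\in\mathbb N_0:S_N\le M\}\) contains \(0\), is bounded, and is an initial segment \(\{0,1,\dots,N_{\max}(M)\}\) of \(\mathbb N_0\). This last point holds because \(S_N\) is increasing: if \(S_N\le M\), then \(S_k\le M\) for every \(k\le N\).

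To prove (5.20), fix \(N\in\mathbb N_0\). If \(S_N\le M<S_{N+1}\), then \(N\) belongs to the set. By monotonicity, every \(k\ge N+1\) satisfies \(S_k\ge S_{N+1}>M\), so no larger index belongs to it. Hence \(N_{\max}(M)=N\). Conversely, if \(N_{\max}(M)=N\), then \(S_N\le M\) by definition. Maximality gives \(S_{N+1}>M\). The case \(N=0\) reads \(0\le M<S_1\), and since \(M>0\) this is exactly \(M<S_1\).

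To connect this with the dynamical meaning, I would invoke Theorem~\ref{thm:event-birth-thresholds}. For \(N\ge1\), exactly \(N\) nonzero state jumps are realizable with total mass \(M\) if and only if \(S_N\le M\). The realizable jump counts are therefore precisely \(1,\dots,N_{\max}(M)\). The count \(0\) is realizable for every \(M>0\), for instance with a purely continuous clock and \(x\) constant. I need to check that case, though: (4.3) would force \(\sigma x_0+\eta=0\), contradicting \(x_0\ne x_*\). So when \(N_{\max}(M)=0\), no admissible clock exists at all. That is consistent with the formula, since \(M<S_1=\lambda_1\) means even a single atom cannot carry the whole mass.

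The only step requiring any care is the monotonicity and divergence of \(S_N\), and both come immediately from \(\lambda_j\ge\lambda_1>0\). I expect no real obstacle; the corollary is essentially a restatement of the threshold theorem in terms of the maximum index.
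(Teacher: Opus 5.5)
Your first two paragraphs are the paper's proof: strict monotonicity of $(S_N)$, together with $S_N\ge N\lambda_1$ and the definition (5.19), gives (5.20) for every $N\in\mathbb N_0$, and the case $N=0$ reduces to $M<S_1$ because $S_0=0<M$. The appeal to Theorem~\ref{thm:event-birth-thresholds} is not needed for this purely definitional statement, and you should delete, rather than retract mid-argument, the opening claim of your third paragraph that zero jumps is realizable: along any admissible trajectory $\|F(t,x(t))\|\ge\sigma R_0>0$, so $\mu_g^c=0$ and every admissible clock with $M>0$ has at least one atom, of mass at least $\lambda_1$, hence for $M<S_1$ no admissible clock exists, exactly as you concluded.
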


\begin{proof}
The sequence \((S_N)\) is strictly increasing, so the assertion follows directly from the definition of \(N_{\max}(M)\).
\end{proof}

\begin{corollary}[An explicit upper bound for \(N_{\max}(M)\)]
Let \(q:=(1+\sigma\lambda_1)^{1/(\alpha-1)}>1\). Then
\[
\lambda_j\ge\lambda_1q^{j-1},\qquad
S_N\ge\lambda_1\frac{q^N-1}{q-1},
\]
and
\[
N_{\max}(M)\le
\left\lfloor
\frac{\log\!\left(1+\dfrac{M(q-1)}{\lambda_1}\right)}{\log q}
\right\rfloor.
\tag{5.22}
\]
\end{corollary}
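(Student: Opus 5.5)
The plan is to prove the geometric lower bound on \(\lambda_j\) by induction from the recurrence (5.14), sum it to bound \(S_N\) from below, and then combine that bound with Corollary~\ref{cor:Nmax} to control \(N_{\max}(M)\).

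\emph{Lower bound on \(\lambda_j\).} By Proposition~\ref{prop:event-escalation}, the sequence \((\lambda_j)\) is strictly increasing, so \(\lambda_j\ge\lambda_1\) for every \(j\). Since \(\sigma>0\) and \(\alpha>1\), the map \(s\mapsto(1+\sigma s)^{1/(\alpha-1)}\) is increasing on \([0,\infty)\). Hence \((1+\sigma\lambda_j)^{1/(\alpha-1)}\ge q\). Substituting this into (5.14) gives \(\lambda_{j+1}\ge q\lambda_j\), and induction on \(j\) yields \(\lambda_j\ge\lambda_1q^{j-1}\). The inequality \(q>1\) holds because \(\sigma\lambda_1>0\).

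\emph{Lower bound on \(S_N\).} Summing the bound from the first step over \(j=1,\dots,N\) in (5.17) gives a geometric series:
\[
S_N\ge\lambda_1\sum_{j=1}^N q^{j-1}=\lambda_1\frac{q^N-1}{q-1}.
\]

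\emph{Bound on \(N_{\max}(M)\).} Put \(N=N_{\max}(M)\). If \(N=0\), the right-hand side of (5.22) is nonnegative, because \(1+M(q-1)/\lambda_1>1\) and \(\log q>0\), so the bound holds trivially. If \(N\ge1\), then \(S_N\le M\) by definition (5.19), and the lower bound on \(S_N\) gives \(\lambda_1(q^N-1)/(q-1)\le M\). Rearranging, \(q^N\le 1+M(q-1)/\lambda_1\). Taking logarithms and dividing by \(\log q>0\) bounds \(N\) by the real quantity inside the floor in (5.22). Since \(N\) is an integer, it is at most the floor of that quantity.

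No step is a real obstacle. The only points needing care are the monotonicity that lets us replace \(\lambda_j\) by \(\lambda_1\) inside the recurrence factor, and the separate handling of the case \(N=0\).
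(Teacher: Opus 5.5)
Your proposal is correct and follows the paper's own argument. You use (5.14) together with \(\lambda_j\ge\lambda_1\) to get \(\lambda_{j+1}\ge q\lambda_j\), sum the resulting geometric series, and solve \(S_{N_{\max}(M)}\le M\) for \(N_{\max}(M)\); your separate treatment of \(N=0\) is harmless but not needed, since \(S_0=0=\lambda_1(q^0-1)/(q-1)\) already makes the general argument cover that case.
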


\begin{proof}
By (5.14) and \(\lambda_j\ge\lambda_1\), one has \(\lambda_{j+1}\ge q\lambda_j\). Summing gives the estimate for \(S_N\), and solving \(S_{N_{\max}(M)}\le M\) gives the result.
\end{proof}

\begin{example}[Maximum number of nonzero state jumps in \(\R^2\)]
\label{ex:event-capacity}
Take \(\sigma=1\), \(\eta=0\), \(x_0=(1,0)\), \(\alpha=2\), \(L=4\), and \(M=1\). Then \(x_*=0\), \(R_0=1\), and \(\lambda_1=0.25\), \(\lambda_2=0.3125\), \(\lambda_3=0.41015625\), and \(\lambda_4\approx0.578384\). Hence \(S_3=0.97265625<1<S_4\approx1.55104\), so \(N_{\max}(1)=3\). The first-jump bound alone gives the cruder estimate \(\lfloor M/\lambda_1\rfloor=4\); the recursive thresholds exclude a fourth jump.
\end{example}

\section{Extremal terminal distances for affine dynamics}

Section~5 determines which numbers of nonzero state jumps are feasible for a prescribed total Stieltjes mass \(M\). Fix such an \(N\). We now optimize the terminal distance over the admissible clock-jump sizes. The lower bound for each jump size depends on the preceding jumps.

For \(N\in\mathbb N\) and \(M>0\), define
\[
\mathcal A_N(M):=\left\{m\in(0,\infty)^N:\sum_{j=1}^Nm_j=M,\quad
m_j^{\alpha-1}\ge\frac{\sigma R_0}{L}\prod_{k<j}(1+\sigma m_k)\right\}.
\tag{6.1}
\]
and
\[
R_N(m):=R_0\prod_{j=1}^N(1+\sigma m_j).
\tag{6.2}
\]

If \(m_j>m_{j+1}\), interchanging these two values preserves admissibility and leaves \(R_N\) unchanged. Hence it is enough to consider
\[
m_1\le m_2\le\cdots\le m_N.
\tag{6.3}
\]

\begin{theorem}[Exact fixed-\(N\) minimum]
\label{thm:fixedN-min}
Let \(N\in\mathbb N\) and \(M\ge S_N\). The unique nondecreasing minimizer is
\[
m_j^{\min}=\lambda_j,\qquad j=1,\ldots,N-1,
\tag{6.4}
\]
and
\[
m_N^{\min}=M-\sum_{j=1}^{N-1}\lambda_j.
\tag{6.5}
\]
Consequently,
\[
R_{N,\min}=R_0\left[\prod_{j=1}^{N-1}(1+\sigma\lambda_j)\right]
\left[1+\sigma\left(M-\sum_{j=1}^{N-1}\lambda_j\right)\right].
\tag{6.6}
\]
\end{theorem}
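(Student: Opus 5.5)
The plan is to combine the componentwise lower bound $m_j\ge\lambda_j$ from Theorem~\ref{thm:event-birth-thresholds} with a concavity (majorization-type) argument for the objective. Taking logarithms, minimizing $R_N(m)$ is equivalent to minimizing $\Phi(m):=\sum_{j=1}^N\varphi(m_j)$, where $\varphi(u):=\log(1+\sigma u)$. This $\varphi$ is strictly increasing and strictly concave on $[0,\infty)$. Concavity means that, for a fixed sum, $\Phi$ is smallest when the mass is as unequal as possible. The constraints in (6.1) only bound each $m_j$ from below, so the natural candidate keeps $m_1,\dots,m_{N-1}$ at their smallest admissible values and puts all remaining mass on the last jump.

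First I would check that $m^{\min}$ from (6.4)--(6.5) belongs to $\mathcal A_N(M)$ and is nondecreasing. The sum equals $M$ by construction. For $j\le N-1$ the constraint holds with equality by (5.12)--(5.13). For $j=N$ the constraint reads $m_N^{\min}\ge\lambda_N$, which is exactly the hypothesis $M\ge S_N$. Monotonicity follows from (5.15) together with $m_N^{\min}\ge\lambda_N>\lambda_{N-1}$.

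Next, for an arbitrary $m\in\mathcal A_N(M)$, I would reuse the induction from the proof of Theorem~\ref{thm:event-birth-thresholds}. Since $\prod_{k<j}(1+\sigma m_k)$ is increasing in each $m_k$, the constraints give $m_j\ge\lambda_j$ for every $j$. Set $e_j:=m_j-\lambda_j\ge0$ for $j<N$; then $m_N=m_N^{\min}-\sum_{j<N}e_j$.

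The key step is a sequence of transfers. Assume $m$ is nondecreasing, as (6.3) permits. For $j=1,\dots,N-1$ in turn, move the amount $e_j$ from coordinate $j$ to coordinate $N$. At each stage the receiving coordinate satisfies $m_N\ge m_j$, and it only grows as transfers proceed. Strict concavity of $\varphi$ then gives, for $a\le b$ and $e>0$,
\[
\varphi(a)+\varphi(b)>\varphi(a-e)+\varphi(b+e).
\]
Hence each transfer with $e_j>0$ strictly decreases $\Phi$, and after all transfers we reach exactly $m^{\min}$.

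The intermediate vectors need not be admissible. This does not matter, because we only compare objective values, and the endpoint $m^{\min}$ is admissible. It follows that $\Phi(m)\ge\Phi(m^{\min})$, with equality only if every $e_j=0$, that is, $m=m^{\min}$. This gives both minimality and uniqueness among nondecreasing admissible vectors. Substituting $m^{\min}$ into (6.2) yields (6.6).

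The step I expect to be most delicate is the concavity inequality with the correct ordering. One must make sure the donor coordinate is never larger than the receiver; this is where the reduction (6.3) is used. One must also justify strictness, which requires $e>0$ and $a-e\ge0$; the latter holds because $a-e_j=\lambda_j>0$.
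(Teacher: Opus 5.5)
Your proposal is correct and follows essentially the paper's argument: the inductive bounds $m_j\ge\lambda_j$ from (6.1), then transferring each excess $m_j-\lambda_j$ to the last coordinate. Each such transfer strictly decreases the product, and your concavity inequality for $\log(1+\sigma u)$ is exactly this fact. Your version runs the transfers as a direct comparison rather than as a necessary condition on a minimizer. It therefore needs neither existence of a minimizer nor admissibility of the intermediate vectors, although that admissibility does in fact hold when the transfers are done in increasing order of $j$.
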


\begin{proof}
For a nondecreasing admissible sequence, suppose that \(m_1,\ldots,m_{j-1}\) have been fixed at \(\lambda_1,\ldots,\lambda_{j-1}\). The smallest admissible value of \(m_j\) is then \(\lambda_j\). If \(m_j>\lambda_j\) for some \(j<N\), transferring the excess to the last coordinate preserves admissibility and strictly decreases the product. Hence (6.4)--(6.5) are necessary and give the minimum.
\end{proof}

\subsection{Fixed-\(N\) maximum}

For the following argument, write \(p:=\alpha-1\) and \(c:=\sigma R_0/L\). For a nondecreasing admissible sequence,
\[
r_j:=\log(1+\sigma m_j).
\tag{6.7}
\]
Then
\[
\log\frac{R_N}{R_0}=\sum_{j=1}^Nr_j,
\tag{6.8}
\]
and
\[
r_j\ge h\!\left(\sum_{k<j}r_k\right),
\tag{6.9}
\]
where
\[
h(s):=\log\!\left[1+\sigma c^{1/p}e^{s/p}\right].
\tag{6.10}
\]
A constraint is called saturated when equality holds and strict otherwise. The fixed-mass condition is
\[
\sum_{j=1}^Ne^{r_j}=N+\sigma M.
\tag{6.11}
\]

\begin{lemma}[Form of a nondecreasing maximizer]
\label{lem:prefix-tail}
Let \(m\in\mathcal A_N(M)\) be a nondecreasing maximizer. Then there exist \(K\in\{1,\ldots,N\}\) and \(\rho>0\) such that
\[
r_1=\cdots=r_K=\rho,
\tag{6.12}
\]
and, for every \(j>K\),
\[
r_j=h\!\left(\sum_{k<j}r_k\right).
\tag{6.13}
\]
\end{lemma}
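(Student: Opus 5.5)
The plan is to argue by contradiction via a mass-preserving local perturbation in the variables \(r_j\) of (6.7). First I fix the bookkeeping. Let \(m\) be a nondecreasing maximizer, so \(r_1\le\cdots\le r_N\), and each \(r_j>0\) because \(m_j>0\). Put \(S_j:=\sum_{k\le j}r_k\), with \(S_0:=0\). Define \(K\) as the largest index with \(r_1=\cdots=r_K\), and set \(\rho:=r_1>0\). Then (6.12) holds by construction, and \(r_K<r_{K+1}\) whenever \(K<N\). It remains to show (6.13), i.e.\ that every constraint (6.9) with index \(j>K\) is saturated. I will use that \(h\) in (6.10) is strictly increasing, and that the admissible set is described exactly by (6.9) and the mass identity (6.11), since \(m_j=(e^{r_j}-1)/\sigma\).

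Suppose, for contradiction, that some constraint with index \(j>K\) is strict, and take the \emph{smallest} such \(j\). The first step is to check that \(r_{j-1}<r_j\). If \(j-1=K\), this is the definition of \(K\). If \(j-1>K\), then by minimality of \(j\) the constraint at \(j-1\) is saturated. Hence
\[
r_{j-1}=h(S_{j-2})<h(S_{j-1})\le r_j,
\]
where the strict inequality uses \(r_{j-1}>0\) and the strict monotonicity of \(h\).

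Next comes the perturbation. For small \(\varepsilon>0\), replace \(r_{j-1}\) by \(r_{j-1}+\varepsilon\) and \(r_j\) by \(r_j-\varepsilon\). This leaves \(S_k\) unchanged for every \(k\ge j\) and for every \(k\le j-2\). Only \(S_{j-1}\) changes, and it increases. So the only constraints affected are the following.
\begin{itemize}
\item The constraint at \(j-1\), whose prefix \(S_{j-2}\) is unchanged while \(r_{j-1}\) grows; it stays satisfied.
\item The constraint at \(j\), which is strict, so it survives for small \(\varepsilon\).
\end{itemize}
Monotonicity of the sequence is also kept for small \(\varepsilon\), because \(r_{j-1}<r_j\). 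The objective \(\sum r_k\) is unchanged. By strict convexity of the exponential, however, the mass \(\sum e^{r_k}\) strictly decreases: its change is \((e^{r_{j-1}}-e^{r_j})\varepsilon+O(\varepsilon^2)<0\).

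To restore (6.11) I return the freed mass to the last coordinate, increasing \(r_N\). This touches only the \(N\)-th constraint, and only loosens it, since the prefix sum \(S_{N-1}\) does not change under this step. It also keeps the sequence nondecreasing. If \(j<N\), this strictly increases \(\sum r_k\), contradicting maximality. If \(j=N\), the same perturbation is applied with the returned mass put back into \(r_{N-1},r_N\) themselves. The result is a mass-preserving move \(r_{N-1}\uparrow\), \(r_N\downarrow\) with \(e^{r_{N-1}}\,dr_{N-1}=-e^{r_N}\,dr_N\), so \(dr_{N-1}>|dr_N|\) and the objective strictly increases. Here the \(N\)-th constraint is strict and the \((N-1)\)-th constraint only loosens. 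Either way we contradict maximality, so every constraint with \(j>K\) is saturated, which is (6.13).

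The step I expect to be delicate is choosing the perturbation so that it does not break the later saturated constraints. The naive "equalize two coordinates at fixed mass" move raises the prefix sums beyond \(j\), which tightens saturated constraints there. The remedy is to make the pair change sum to zero, so that all later prefix sums are frozen, and to route the resulting mass surplus into \(r_N\), whose increase cannot violate any constraint. The remaining care is in justifying \(r_{j-1}<r_j\) from the minimal choice of \(j\) and the strict monotonicity of \(h\), and in handling the edge case \(j=N\) separately.
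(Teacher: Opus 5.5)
Your proposal is correct and is essentially the paper's own argument. It uses a transfer from $r_j$ to $r_{j-1}$ that keeps $r_{j-1}+r_j$ (hence all later prefix sums) fixed, strict convexity of the exponential to free mass, return of that mass to the last coordinate, and strict monotonicity of $h$ to show that saturation propagates forward; your choice of the maximal equal initial block $K$ and the smallest strict index $j>K$, the observation that only the upper constraint needs to be strict, and the separate treatment of $j=N$ make the paper's brief sketch more precise.
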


\begin{proof}
If two consecutive constraints are strict and \(r_j<r_{j+1}\), a sufficiently small transfer from \(r_{j+1}\) to \(r_j\) preserves the local constraints and the value of \(r_j+r_{j+1}\), while strict convexity reduces \(e^{r_j}+e^{r_{j+1}}\). The saved mass can then be added to the last coordinate, increasing the objective. Hence, whenever two consecutive constraints are strict, the corresponding consecutive coordinates are equal. Once a constraint is saturated, the strict increase of \(h\) and the same exchange argument force every later constraint to be saturated.
\end{proof}

For \(q\in[\lambda_1,\infty)\), define
\[
\widehat m_1(q):=q,
\]
and, for \(j=2,\ldots,N\),
\[
\widehat m_j(q):=\max\left\{q,\left[\frac{\sigma R_0}{L}\prod_{k<j}(1+\sigma \widehat m_k(q))\right]^{1/(\alpha-1)}\right\}.
\tag{6.14}
\]
Set
\[
G_N(q):=\sum_{j=1}^N\widehat m_j(q).
\tag{6.15}
\]

\begin{theorem}[Exact fixed-\(N\) maximum]
\label{thm:fixedN-max}
Let \(N\in\mathbb N\) and \(M\ge S_N\). There is a unique \(q_N(M)\in[\lambda_1,\infty)\) such that
\[
G_N(q_N(M))=M.
\tag{6.16}
\]
The unique nondecreasing maximizer is
\[
m_j^*=\widehat m_j(q_N(M)),\qquad j=1,\ldots,N,
\tag{6.17}
\]
and
\[
R_{N,\max}=R_0\prod_{j=1}^N(1+\sigma m_j^*).
\tag{6.18}
\]
\end{theorem}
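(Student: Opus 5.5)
I will prove the theorem in three steps: (a) $G_N$ is continuous and strictly increasing with the right range, so $q_N(M)$ exists and is unique; (b) $\widehat m(q)$ is admissible for every $q\ge\lambda_1$; (c) every nondecreasing maximizer has the form $\widehat m(q)$ for some $q$, so by the mass constraint it must be $\widehat m(q_N(M))$. Existence of a maximizer comes first. The set $\mathcal A_N(M)$ is nonempty for $M\ge S_N$ by Theorem~\ref{thm:event-birth-thresholds}. It is closed and bounded in $\R^N$, since the constraints are non-strict and $m_j\ge\lambda_1>0$. The objective $R_N$ in (6.2) is continuous, so a maximizer exists. After sorting as in (6.3), it is nondecreasing.

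For step (a), induction on $j$ shows that each $\widehat m_j(q)$ is continuous and nondecreasing in $q$: the product inside (6.14) involves only earlier coordinates, and the maximum of continuous nondecreasing functions is again continuous and nondecreasing. Since $\widehat m_1(q)=q$ is strictly increasing, $G_N$ is continuous and strictly increasing, with $G_N(q)\to\infty$ as $q\to\infty$. At $q=\lambda_1$ I will check inductively that $\widehat m_j(\lambda_1)=\lambda_j$. The bracket in (6.14) equals $\lambda_j$ by (5.13), and $\lambda_j\ge\lambda_1$ by (5.15). Hence $G_N(\lambda_1)=S_N\le M$, and the intermediate value theorem gives a unique $q_N(M)$ with $G_N(q_N(M))=M$. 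For step (b), each $\widehat m_j(q)$ dominates the bracket in (6.14), so every constraint in (6.1) holds. The sequence is nondecreasing: once the bracket exceeds $q$ it keeps growing, because each new factor is $1+\sigma\widehat m_k>1$ and the earlier coordinates are fixed. Thus $\widehat m(q_N(M))\in\mathcal A_N(M)$.

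Step (c) is the heart of the proof, and I will take Lemma~\ref{lem:prefix-tail} as the starting point. For a nondecreasing maximizer, the lemma gives equal coordinates $m_1=\cdots=m_K=:q$ followed by saturated constraints for $j>K$. I must show that this is exactly $\widehat m(q)$. For $j\le K$, admissibility gives $q\ge\bigl[c\prod_{k<j}(1+\sigma q)\bigr]^{1/p}$, so $\widehat m_j(q)=\max\{q,\text{bracket}\}=q$. For $j>K$, saturation gives $m_j=$ bracket. Monotonicity of the maximizer then gives bracket $\ge m_{j-1}\ge q$, so again $m_j=\widehat m_j(q)$. Also $q=m_1\ge\lambda_1$ by the first constraint. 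Therefore $m=\widehat m(q)$ with $G_N(q)=\sum_j m_j=M$, which forces $q=q_N(M)$ by step (a). This yields both the formula (6.17) and uniqueness among nondecreasing maximizers, and (6.18) is then the definition (6.2).

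The main obstacle is the reliance on Lemma~\ref{lem:prefix-tail}, whose exchange argument needs care at the boundary of $\mathcal A_N(M)$. The delicate points are transfers that must keep later constraints valid, since those constraints depend on $\sum_{k<j}r_k$, and the case where saturation occurs before the strict constraints end. If I need to shore this up, I will first check directly that a transfer from $r_{j+1}$ to $r_j$ which keeps $r_j+r_{j+1}$ fixed leaves every prefix sum $\sum_{k<i}r_k$ with $i\ge j+2$ unchanged. Such a transfer therefore leaves all constraints from index $j+2$ onward untouched. Adding the saved mass to $r_N$ only relaxes constraints, since $r_N$ appears in no later constraint.
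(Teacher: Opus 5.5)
Your proposal is correct and follows essentially the same route as the paper: a maximizer exists by compactness of $\mathcal A_N(M)$, it can be sorted to be nondecreasing, Lemma~\ref{lem:prefix-tail} puts it in the form (6.14), and continuity and strict monotonicity of $G_N$ with $G_N(\lambda_1)=S_N$ single out $q_N(M)$. You also supply several details the paper leaves implicit, all of which check out: the inductive identity $\widehat m_j(\lambda_1)=\lambda_j$, the admissibility and monotonicity of $\widehat m(q)$, and the matching of the prefix--tail form with $\widehat m(q)$, where feasibility is used for $j\le K$ and saturation together with monotonicity for $j>K$.
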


\begin{proof}
Every admissible vector satisfies \(m_j\ge\lambda_1>0\), so \(\mathcal A_N(M)\) is a closed subset of the compact simplex \(\{m_j\ge\lambda_1,\ \sum_jm_j=M\}\). Hence a maximum exists. By the ordering observation above it may be taken nondecreasing, and Lemma~\ref{lem:prefix-tail} gives the form (6.14). Each \(\widehat m_j\) is continuous and nondecreasing, \(\widehat m_1(q)=q\), \(G_N(\lambda_1)=S_N\), and \(G_N(q)\to\infty\). Hence \(G_N\) is strictly increasing and (6.16) has a unique solution.
\end{proof}

\begin{corollary}[Criterion for equal jump sizes]
\label{cor:equal-max}
The equal allocation \(m_j=M/N\) is the fixed-\(N\) maximizer if and only if
\[
\left(\frac{M}{N}\right)^{\alpha-1}
\ge
\frac{\sigma R_0}{L}\left(1+\frac{\sigma M}{N}\right)^{N-1}.
\tag{6.19}
\]
\end{corollary}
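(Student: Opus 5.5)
Two facts do the work: the equal vector \(m^{\mathrm{eq}}:=(M/N,\ldots,M/N)\) is the unconstrained maximizer of \(R_N\) on the simplex, and (6.19) is exactly the admissibility of \(m^{\mathrm{eq}}\). The corollary says the constrained maximizer is the equal allocation precisely when the unconstrained one is feasible, so I will prove these two facts and combine them.

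\emph{Step 1 (admissibility).} For \(m^{\mathrm{eq}}\), the \(j\)-th constraint in (6.1) reads
\[
\left(\frac{M}{N}\right)^{\alpha-1}\ge\frac{\sigma R_0}{L}\left(1+\frac{\sigma M}{N}\right)^{j-1}.
\]
The left side does not depend on \(j\), and the right side is increasing in \(j\) because \(\sigma M/N>0\). Hence all \(N\) constraints hold if and only if the \(j=N\) constraint holds, and that constraint is exactly (6.19). So (6.19) holds if and only if \(m^{\mathrm{eq}}\in\mathcal A_N(M)\).

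\emph{Step 2 (global optimality on the simplex).} On the simplex \(\{m_j>0,\ \sum_j m_j=M\}\), the quantity \(\log(R_N/R_0)=\sum_j\log(1+\sigma m_j)\) is a sum of strictly concave functions. By Jensen's inequality, or the AM--GM inequality applied to the factors \(1+\sigma m_j\), it is maximized uniquely at \(m^{\mathrm{eq}}\). Since \(\mathcal A_N(M)\) is contained in this simplex, the following hold:
\begin{itemize}
\item If (6.19) holds, then \(m^{\mathrm{eq}}\) is admissible by Step 1 and maximizes \(R_N\) over the larger set. It is therefore the maximizer over \(\mathcal A_N(M)\).
\item It is nondecreasing, so by uniqueness it coincides with the unique nondecreasing maximizer of Theorem~\ref{thm:fixedN-max}.
\end{itemize}

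\emph{Step 3 (converse).} If (6.19) fails, then \(m^{\mathrm{eq}}\notin\mathcal A_N(M)\) by Step 1, so it cannot be the fixed-\(N\) maximizer. The maximizer exists by Theorem~\ref{thm:fixedN-max}, since \(M\ge S_N\) is implicitly assumed. We may also remark that in the language of Theorem~\ref{thm:fixedN-max}, (6.19) means \(q_N(M)=M/N\) with all \(\widehat m_j(q)=q\). This gives a consistency check: the max in (6.14) is attained by \(q\) for every \(j\) exactly when (6.19) holds.

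I expect no real obstacle. The only delicate point is reading ``is the fixed-\(N\) maximizer'' as requiring membership in \(\mathcal A_N(M)\), so that the converse reduces to infeasibility. The monotonicity in \(j\) in Step 1 is what makes the single inequality (6.19) suffice.
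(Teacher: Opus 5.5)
Your proposal is correct and follows the same argument as the paper. Equal jump sizes maximize \(\prod_{j}(1+\sigma m_j)\) at fixed sum, and the equal allocation is admissible exactly when the last and strongest constraint (6.19) holds; you additionally spell out that the constraints for the equal allocation increase in \(j\), and you use strict AM--GM to get uniqueness.
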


\begin{proof}
For a fixed sum, \(\prod_{j=1}^N(1+\sigma m_j)\) is maximized by equal jump sizes. Thus the equal allocation is the admissible maximizer exactly when it satisfies the last, and strongest, constraint in (6.1).
\end{proof}

\subsection{Global extrema}

\begin{proposition}[Global minimum and maximum]
\label{thm:global-min}
\label{thm:global-max}
If \(M\ge S_1\), then
\[
R_{\min}^{\rm global}=R_0(1+\sigma M),
\tag{6.20}
\]
attained by a single clock jump of size \(M\), while
\[
R_{\max}^{\rm global}=\max_{1\le N\le N_{\max}(M)}R_{N,\max}.
\tag{6.21}
\]
\end{proposition}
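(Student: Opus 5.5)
The plan is to split the statement into the global minimum and the global maximum. Both rest on the reduction from Sections~4--5. Every admissible pair \((g,x)\) has \(\mu_g^c=0\), and every clock jump changes the state. The number of jumps \(N\) is finite, satisfies \(1\le N\le N_{\max}(M)\) by Theorem~\ref{thm:event-birth-thresholds} and Corollary~\ref{cor:Nmax}, and the jump vector lies in \(\mathcal A_N(M)\). The terminal distance is then \(\|x(b)-x_*\|=R_N(m)\) by (5.8), since the trajectory is constant after the last jump. Note also that \(N=0\) is impossible when \(M>0\), because \(\mu_g^c=0\) forces all the mass to be atomic. Hence the admissible terminal distances are exactly \(\bigcup_{1\le N\le N_{\max}(M)}\{R_N(m):m\in\mathcal A_N(M)\}\), and the construction in the proof of Theorem~\ref{thm:event-birth-thresholds} realizes any such \(m\).

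For the minimum, I would use the elementary inequality \(\prod_{j=1}^N(1+\sigma m_j)\ge 1+\sigma\sum_j m_j=1+\sigma M\). It follows by expanding the product: every cross term is nonnegative, and the inequality is strict when \(N\ge2\) because all \(m_j>0\). So \(R_N(m)\ge R_0(1+\sigma M)\) for every admissible configuration. To show the bound is attained, take \(N=1\) and \(m_1=M\). The single constraint in (6.1) reads \(M^{\alpha-1}\ge\sigma R_0/L=\lambda_1^{\alpha-1}\), which is exactly \(M\ge S_1=\lambda_1\). This gives (6.20). The strictness for \(N\ge 2\) also shows that the single jump is the unique minimizing number of jumps, though that is not required.

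For the maximum, for each fixed \(N\) with \(S_N\le M\), Theorem~\ref{thm:fixedN-max} gives \(\max_{\mathcal A_N(M)}R_N=R_{N,\max}\), and this maximum is attained. The feasible \(N\) form the finite nonempty set \(\{1,\ldots,N_{\max}(M)\}\); it is nonempty because \(M\ge S_1\), and every such \(N\) satisfies \(S_N\le M\) since \((S_N)\) is increasing. A supremum over a finite union of sets with attained maxima is the largest of those maxima, and it is attained. This gives (6.21).

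The main obstacle is justifying that every admissible solution falls into exactly one of these finite-\(N\) classes, with jump vector in \(\mathcal A_N(M)\). The ordering of \(\mathcal A_N(M)\) follows the chronological order of the jumps, while (6.3) reorders them; this is harmless because Theorem~\ref{thm:fixedN-max} already covers all of \(\mathcal A_N(M)\). The finiteness of \(N\) comes from Corollary~\ref{cor:purely-atomic-clock}(ii)/(iii), applied with \(c=\sigma R_0\), since \(\|F(t,x(t))\|=\sigma\|y(t)\|\ge\sigma R_0\) by (5.8). I would state this explicitly before carrying out the two optimizations.
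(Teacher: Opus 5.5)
Your proposal is correct and follows the paper's argument. For the minimum you use the expansion \(\prod_{j=1}^N(1+\sigma m_j)\ge 1+\sigma M\), strict for \(N\ge2\), and for the maximum you take the largest fixed-\(N\) maximum over the finitely many feasible \(N\); your extra checks (a single jump of size \(M\) is admissible exactly when \(M\ge S_1=\lambda_1\), and every admissible solution reduces via Corollary~\ref{cor:purely-atomic-clock} with \(c=\sigma R_0\) to some \(m\in\mathcal A_N(M)\) with \(1\le N\le N_{\max}(M)\)) spell out what the paper's two-line proof leaves implicit.
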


\begin{proof}
For every positive allocation,
\[
\prod_{j=1}^N(1+\sigma m_j)\ge 1+\sigma M,
\]
with strict inequality for \(N\ge2\). The maximum follows by taking the largest fixed-\(N\) maximum over the finitely many feasible values of \(N\).
\end{proof}

After shifting the equilibrium by \(y=x-x_*\), the affine equation is \(y'_g=\sigma y\). If the same total increment \(M\) is used as a continuous clock variable \(s\in[0,M]\), then \(dy/ds=\sigma y\) and the terminal distance is \(R_0e^{\sigma M}\). Since \(\log(1+\sigma m_j)<\sigma m_j\) for every \(m_j>0\), every response corresponding to admissible clock-jump sizes satisfies
\[
R_N(m)<R_0e^{\sigma M},
\]
and hence \(R_{\max}^{\rm global}<R_0e^{\sigma M}\).

\begin{example}[Complete extremal calculation]
\label{ex:complete-extremal}
For the data in Example~\ref{ex:event-capacity}, \(N_{\max}=3\). For one jump, \(R_1=2\). For two jumps, the fixed-\(2\) minimum \((0.25,0.75)\) gives \(R_{2,\min}=2.1875\), while \((0.5,0.5)\) gives \(R_{2,\max}=2.25\). For three jumps, the fixed-\(3\) minimum is \((0.25,0.3125,0.4375)\), with \(R_{3,\min}=2.3583984375\). The fixed-\(3\) maximum is determined by
\[
q=-13+4\sqrt{11}\approx0.266499,
\]
with maximizing jump sizes
\[
m^*=\bigl(-13+4\sqrt{11},\,-3+\sqrt{11},\,17-5\sqrt{11}\bigr).
\]
Thus \(R_{3,\max}=2324-700\sqrt{11}\approx2.362647\), so \(R_{\max}^{\rm global}\approx2.362647\). The continuous-clock value is \(e\approx2.718282\).
\end{example}

\section{Recovery of a Stieltjes clock from the state trajectory}

The inverse problem is to determine which parts of a Stieltjes derivator \(g:I\to\R\) can be recovered from an observed state trajectory. Throughout this section, \(g\) is a Stieltjes derivator as in Section~2. Consider
\[
x'_g(t)=F(t,x(t)),
\tag{7.1}
\]
where \(F:I\times\R^n\to\R^n\) is known and \(x:I\to\R^n\) is a super-\(g\)-H\"older solution with exponent \(\alpha>1\) and \([x]_{\alpha,g}\le L\), \(L>0\), whose jump times and corresponding state jumps are observed. The inverse statements below are conditional on this known vector field and observed trajectory; no separate forward existence theorem is applied. For a jump point \(\tau\in D_g\), retain the notation \(m_\tau=\Delta^+g(\tau)\) from Section~2 and write
\[
z_\tau:=\Delta^+x(\tau).
\tag{7.2}
\]
At every such point,
\[
z_\tau=m_\tau F(\tau,x(\tau)).
\tag{7.3}
\]

\[
D_x:=\{\tau\in[a,b):\Delta^+x(\tau)\ne0\}.
\]

\begin{theorem}[Recovery of a clock jump from an observed state jump]
\label{thm:visible-recovery}
Let \(F:I\times\R^n\to\R^n\) be known, and let \(x:I\to\R^n\) be a super-\(g\)-H\"older solution of (7.1) of exponent \(\alpha>1\), with \([x]_{\alpha,g}\le L\) for some \(L>0\), and suppose that its jump times and state jumps are observed. Fix \(\tau\in D_x\) and set \(z_\tau=\Delta^+x(\tau)\). Then \(\tau\in D_g\), \(F(\tau,x(\tau))\ne0\), and the corresponding clock jump is uniquely determined by
\[
m_\tau=\frac{\|z_\tau\|}{\|F(\tau,x(\tau))\|},
\tag{7.4}
\]
and the consistency condition
\[
z_\tau\in\mathbb R_+F(\tau,x(\tau)),
\tag{7.5}
\]
where \(\mathbb R_+F(\tau,x(\tau)):=\{rF(\tau,x(\tau)):r>0\}\). Equivalently,
\[
m_\tau=
\frac{\langle z_\tau,F(\tau,x(\tau))\rangle}
{\|F(\tau,x(\tau))\|^2}.
\tag{7.6}
\]
\end{theorem}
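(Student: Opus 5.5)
The plan is to reduce everything to the atomic jump relation of Theorem~\ref{thm:atomic-dynamics} and then read off \(m_\tau\) by elementary vector algebra. Since \(x\) is a super-\(g\)-H\"older solution of (7.1) with \([x]_{\alpha,g}\le L\), condition (i) of Theorem~\ref{thm:atomic-dynamics} holds, so its equivalent form (ii) and the consequences listed there are available.

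\emph{Step 1: \(\tau\in D_g\).} I argue by contradiction. By Theorem~\ref{thm:pure-jump-characterization} (equivalently (4.4)), \(x(t)=x(a)+\sum_{\sigma\in D_g,\ \sigma<t}\Delta^+x(\sigma)\), with an absolutely convergent series. If \(\tau\notin D_g\), let \(t\downarrow\tau\). The partial sums over \(\{\sigma\in D_g:\tau\le\sigma<t\}\) tend to zero by absolute convergence, because the index set no longer contains \(\tau\). Hence \(x(\tau+)=x(\tau)\), contradicting \(\tau\in D_x\). A quicker alternative: the \(g\)-H\"older estimate gives \(\|x(t)-x(\tau)\|\le L|g(t)-g(\tau)|^\alpha\to L(\Delta^+g(\tau))^\alpha\), which is zero when \(\tau\notin D_g\). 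I will use this one-line version.

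\emph{Step 2: \(F(\tau,x(\tau))\ne0\) and the jump relation.} For \(\tau\in D_g\), Theorem~\ref{thm:atomic-dynamics} gives (7.3): \(z_\tau=m_\tau F(\tau,x(\tau))\) with \(m_\tau>0\). Since \(z_\tau\ne0\), the vector \(F(\tau,x(\tau))\) is nonzero. This already gives the consistency condition (7.5), with \(r=m_\tau>0\).

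\emph{Step 3: recovering \(m_\tau\).} Taking norms in (7.3) gives \(\|z_\tau\|=m_\tau\|F(\tau,x(\tau))\|\), and dividing by the nonzero norm yields (7.4). Taking the inner product of (7.3) with \(F(\tau,x(\tau))\) gives \(\langle z_\tau,F\rangle=m_\tau\|F\|^2\), which is (7.6). Uniqueness is immediate: \(F\) is known, and \(x(\tau)\) and \(z_\tau\) are observed, so any clock producing this trajectory must have the same \(m_\tau\). Concretely, if \(m\,F=m'F\) with \(F\ne0\), then \(m=m'\).

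I expect no real obstacle. The only delicate point is Step 1, namely justifying that a state jump can only occur at a clock atom. This rests on the existence of \(x(\tau+)\) and on the H\"older estimate passing to the limit, both of which were already established in the proof of Theorem~\ref{thm:pure-jump-characterization}; see (3.5).
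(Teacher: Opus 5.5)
Your proposal is correct and follows the paper's proof essentially step by step. You place \(\tau\) in \(D_g\), use the atomic relation (7.3) with \(m_\tau>0\), and deduce \(F(\tau,x(\tau))\ne0\) and the ray condition (7.5) from \(z_\tau\ne0\). You then take norms and the inner product with \(F(\tau,x(\tau))\) to get (7.4) and (7.6). The only cosmetic difference is that you obtain \(\tau\in D_g\) from the estimate \(\|x(t)-x(\tau)\|\le L|g(t)-g(\tau)|^\alpha\to L(\Delta^+g(\tau))^\alpha\) as \(t\downarrow\tau\), rather than by citing the pure-jump representation of Theorem~\ref{thm:pure-jump-characterization}; this is equally valid.
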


\begin{proof}
By Theorem~\ref{thm:pure-jump-characterization}, every nonzero jump of \(x\) occurs at a jump point of \(g\), so \(\tau\in D_g\) and \(m_\tau:=\Delta^+g(\tau)>0\). Equation (7.3) then gives \(z_\tau=m_\tau F(\tau,x(\tau))\). Since \(z_\tau\ne0\), the vector field is nonzero and the positive-ray condition follows. Taking norms gives (7.4), while taking the inner product with \(F(\tau,x(\tau))\) gives (7.6).
\end{proof}

\begin{corollary}[Necessary H\"older bound for a recovered clock jump]
For an observed state jump \(\tau\in D_x\) under the assumptions of Theorem~\ref{thm:visible-recovery}, the recovered mass satisfies
\[
m_\tau\ge\left(\frac{\|z_\tau\|}{L}\right)^{1/\alpha},
\tag{7.7}
\]
equivalently
\[
\|F(\tau,x(\tau))\|^\alpha\le L\|z_\tau\|^{\alpha-1}.
\tag{7.8}
\]
\end{corollary}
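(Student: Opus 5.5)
The plan is to combine the seminorm bound at a single atom with the recovery formula of Theorem~\ref{thm:visible-recovery}. Since \(x\) is super-\(g\)-H\"older with \([x]_{\alpha,g}\le L\), the jump estimate (3.5) from the proof of Theorem~\ref{thm:pure-jump-characterization} gives
\[
\|z_\tau\|=\|\Delta^+x(\tau)\|\le L\,m_\tau^\alpha .
\]
Here \(\tau\in D_g\) and \(m_\tau>0\) are guaranteed by Theorem~\ref{thm:visible-recovery}. Equivalently, this is (3.4) together with the bound \([x]_{\alpha,g}\le L\). Dividing by \(L\) and taking the \(1/\alpha\)-th power, which is monotone on \([0,\infty)\), yields (7.7) directly.

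For the equivalence with (7.8), substitute the recovered value \(m_\tau=\|z_\tau\|/\|F(\tau,x(\tau))\|\) from (7.4) into \(\|z_\tau\|\le L m_\tau^\alpha\). Since \(\tau\in D_x\), we have \(z_\tau\ne0\), and Theorem~\ref{thm:visible-recovery} gives \(F(\tau,x(\tau))\ne0\). Both norms are therefore strictly positive, and we may multiply by \(\|F(\tau,x(\tau))\|^\alpha\) and divide by \(\|z_\tau\|\) without changing the direction of the inequality. This gives \(\|F(\tau,x(\tau))\|^\alpha\le L\|z_\tau\|^{\alpha-1}\). Each step is reversible, which establishes the stated equivalence between (7.7) and (7.8).

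The same conclusion can be cross-checked against (4.5): the bound \(\|F\|\le L m_\tau^{\alpha-1}\) with \(m_\tau=\|z_\tau\|/\|F\|\) yields the identical inequality. There is no real obstacle. The only point requiring care is strict positivity of \(\|z_\tau\|\) and \(\|F(\tau,x(\tau))\|\), which legitimizes the divisions and the fractional powers, and this is supplied by \(\tau\in D_x\).
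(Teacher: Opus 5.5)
Your proposal is correct and follows the paper's proof. Both use the atomic estimate \(\|z_\tau\|\le Lm_\tau^\alpha\) to obtain (7.7), then substitute the recovered mass from (7.4) to obtain (7.8); your explicit check that \(\|z_\tau\|\) and \(\|F(\tau,x(\tau))\|\) are strictly positive, which justifies the divisions and the reversibility behind the stated equivalence, is a detail the paper leaves implicit.
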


\begin{proof}
The atomic H\"older estimate \(\|z_\tau\|\le Lm_\tau^\alpha\) gives (7.7). Substituting the recovered mass from (7.4) then gives (7.8).
\end{proof}

Recall the zero set of the vector field along the trajectory,
\[
Z_x=\{t\in I:F(t,x(t))=0\},
\]
introduced in Section~4.

\begin{theorem}[Observed state jumps and clock jumps]
\label{thm:visibility-decomposition}
Let \(F:I\times\R^n\to\R^n\), and let \(x:I\to\R^n\) be a super-\(g\)-H\"older solution of (7.1) with exponent \(\alpha>1\) and \([x]_{\alpha,g}\le L\) for some \(L>0\), and suppose that its jump times and state jumps are observed. Then
\[
D_x=\{\tau\in D_g:F(\tau,x(\tau))\ne0\},
\tag{7.10}
\]
so \(D_g\setminus D_x\subseteq Z_x\), and
\[
\mu_g^c(I\setminus Z_x)=0.
\tag{7.11}
\]
Thus Stieltjes mass that is not detected by the state can occur only where the known vector field vanishes along the trajectory.
\end{theorem}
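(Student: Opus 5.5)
The plan is to read all three assertions off Theorem~\ref{thm:atomic-dynamics}, applied to the given solution \(x\) with the bound \([x]_{\alpha,g}\le L\). Since \(x\) solves (7.1) and \([x]_{\alpha,g}\le L\), condition (i) of that theorem holds. Hence (ii) holds too, and we get the jump relation \(\Delta^+x(\tau)=m_\tau F(\tau,x(\tau))\) at every \(\tau\in D_g\), which is (7.3), together with the vanishing condition (4.3) on the nonatomic part.

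For (7.10) I will prove both inclusions. First, take \(\tau\in D_x\). Theorem~\ref{thm:visible-recovery}, or directly the pure-jump representation of Theorem~\ref{thm:pure-jump-characterization}, gives \(\tau\in D_g\). Then (7.3) with \(z_\tau\ne0\) forces \(F(\tau,x(\tau))\ne0\).

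Conversely, take \(\tau\in D_g\) with \(F(\tau,x(\tau))\ne0\). Since \(m_\tau>0\), (7.3) gives \(\Delta^+x(\tau)=m_\tau F(\tau,x(\tau))\ne0\). Here the right limit \(x(\tau+)\) exists by Theorem~\ref{thm:pure-jump-characterization}, and \(\tau\in[a,b)\) because \(D_g\subset[a,b)\). So \(\tau\in D_x\).

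The inclusion \(D_g\setminus D_x\subseteq Z_x\) is then immediate. If \(\tau\in D_g\) and \(\tau\notin D_x\), then by (7.10) \(F(\tau,x(\tau))=0\), which means \(\tau\in Z_x\).

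For (7.11), I will observe that (4.3) says exactly that the set \(I\setminus Z_x=\{t:F(t,x(t))\ne0\}\) is \(\mu_g^c\)-null. One small point needs checking: the ``almost every'' statement in (4.3) should be read as giving a null set, not merely as being contained in a null set, since \(Z_x\) need not be Borel. I expect this measurability detail to be the only possible obstacle. I will handle it by interpreting (7.11) in the completion of \(\mu_g^c\), or equivalently as saying that \(I\setminus Z_x\) is contained in a \(\mu_g^c\)-null set, exactly as in the statement of Theorem~\ref{thm:atomic-dynamics}.

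The final sentence of the theorem is an interpretation of these facts. Undetected atomic mass lies on \(D_g\setminus D_x\subseteq Z_x\), and the nonatomic mass is carried by \(Z_x\).
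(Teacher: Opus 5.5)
Your proposal is correct and follows essentially the same route as the paper. The paper also gets \(D_x\subseteq D_g\) from the pure-jump representation, derives the equivalence \(\tau\in D_x\iff F(\tau,x(\tau))\ne0\) for \(\tau\in D_g\) from (7.3) and \(m_\tau>0\), and reads (7.11) off (4.3) in Theorem~\ref{thm:atomic-dynamics}. Your remark that (7.11) should be read in the completion of \(\mu_g^c\), because \(Z_x\) need not be Borel, is a harmless refinement that the paper leaves implicit.
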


\begin{proof}
By the pure-jump representation of Section~3, every nonzero jump of \(x\) occurs at a jump point of \(g\); hence \(D_x\subseteq D_g\). For \(\tau\in D_g\), equation (7.3) and \(m_\tau>0\) show that \(\tau\in D_x\) if and only if \(F(\tau,x(\tau))\ne0\). This proves (7.10). Equation (7.11) is Theorem~\ref{thm:atomic-dynamics} applied to the same solution.
\end{proof}

\begin{theorem}[All Stieltjes measures compatible with the observed trajectory]
\label{thm:compatible-single-clocks}
Let \(F:I\times\R^n\to\R^n\) be known and let \(x:I\to\R^n\) be an observed trajectory, including its individual state jumps, that is a super-\(g\)-H\"older solution of (7.1) for some Stieltjes clock. Define the measure recovered from the observed state jumps
\[
\mu_{\rm obs}:=\sum_{\tau\in D_x}m_\tau\delta_\tau,
\qquad
m_\tau:=\frac{\|\Delta^+x(\tau)\|}{\|F(\tau,x(\tau))\|}.
\]
Then every compatible finite Stieltjes measure has the form
\[
\mu_g=\mu_{\rm obs}+\nu,
\]
where \(\nu\) is a finite nonnegative Borel measure concentrated on \(Z_x\cap[a,b)\). Conversely, every such \(\nu\) defines, up to an additive normalization, a left-continuous nondecreasing Stieltjes clock compatible with the same observed trajectory and the same super-\(g\)-H\"older bound.
\end{theorem}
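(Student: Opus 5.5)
The plan is to prove the two directions separately, using Theorem~\ref{thm:visibility-decomposition} for necessity and Theorem~\ref{thm:atomic-dynamics} (in its form (ii)$\Rightarrow$(i)) for sufficiency.

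\emph{Necessity.} Let \(g\) be any compatible clock. Decompose \(\mu_g=\mu_g^c+\mu_g^a\) as in (2.5)--(2.6).
\begin{enumerate}[label=(\alph*)]
\item Split the atomic part as \(\mu_g^a=\sum_{\tau\in D_x}m_\tau\delta_\tau+\sum_{\tau\in D_g\setminus D_x}m_\tau\delta_\tau\).
\item By Theorem~\ref{thm:visible-recovery}, each mass \(m_\tau\) with \(\tau\in D_x\) equals \(\|\Delta^+x(\tau)\|/\|F(\tau,x(\tau))\|\). So the first sum is exactly \(\mu_{\rm obs}\).
\item By (7.10), the remaining atoms lie in \(D_g\setminus D_x\subseteq Z_x\).
\item By (7.11), \(\mu_g^c\) is concentrated on \(Z_x\).
\end{enumerate}
Set \(\nu:=\mu_g^c+\sum_{\tau\in D_g\setminus D_x}m_\tau\delta_\tau\). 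This is nonnegative, has finite mass at most \(g(b)-g(a)\), and is concentrated on \(Z_x\cap[a,b)\), since \(\mu_g\) lives on \([a,b)\) under the convention (2.1).

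\emph{Sufficiency.} Given \(\nu\) finite, nonnegative and concentrated on \(Z_x\cap[a,b)\), define \(\tilde\mu:=\mu_{\rm obs}+\nu\). This is finite because \(\mu_{\rm obs}\) is a restriction of the original \(\mu_g\). Then set \(\tilde g(t):=c+\tilde\mu([a,t))\), which is left-continuous, nondecreasing, and satisfies (2.1). Its atoms are \(D_{\tilde g}=D_x\cup\{\tau:\nu(\{\tau\})>0\}\), and \(\tilde\mu^c=\nu^c\). We then verify condition (ii) of Theorem~\ref{thm:atomic-dynamics} for the same \(x\) and \(\tilde g\).
\begin{enumerate}[label=(\alph*)]
\item For (4.3), \(F(t,x(t))=0\) holds \(\nu^c\)-a.e., because \(\nu\) is concentrated on \(Z_x\).
\item For (4.4), the original representation gives \(x(t)=x_0+\sum_{\tau\in D_x,\tau<t}\Delta^+x(\tau)\), since jumps off \(D_x\) vanish. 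At \(\tau\in D_x\) the new mass is \(m_\tau\) with \(m_\tau F(\tau,x(\tau))=\Delta^+x(\tau)\), so these terms are unchanged. At new atoms \(\tau\in Z_x\setminus D_x\), the term \(\tilde m_\tau F(\tau,x(\tau))=0\). If \(\nu\) has an atom at some \(\tau\in D_x\), that would contradict \(F\ne0\) there; this cannot happen because such \(\tau\notin Z_x\).
\item For (4.5), the terms at \(\tau\in D_x\) are unchanged, so the supremum is bounded by the original one, which is at most \(L\). New atoms contribute \(0\).
\end{enumerate}
Theorem~\ref{thm:atomic-dynamics} then gives that \(x\) is a solution for \(\tilde g\) with \([x]_{\alpha,\tilde g}\le L\). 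The additive constant \(c\) is the only freedom, which gives the normalization clause.

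\emph{Main obstacle.} The delicate point is that \(\tilde g\) may have new flat intervals or new continuity points, which changes which \(s,t\) satisfy \(\tilde g(s)=\tilde g(t)\). In principle this could break condition (2.8) or alter the value of \((x)'_{\tilde g}\). I would avoid rechecking these directly. The converse direction of Theorem~\ref{thm:atomic-dynamics} already derives the Hölder bound from (4.4)--(4.5), via the estimate \(\|x(t)-x(s)\|\le L(\tilde g(t)-\tilde g(s))^\alpha\). That estimate depends only on the masses of the new clock, so it already forces \(x\) to be constant wherever \(\tilde g\) is constant. I would double-check that this inequality uses only \(D_{\tilde g}\cap[s,t)\), and that atoms of \(\nu\) at points outside \(D_x\) lie in \(Z_x\) by concentration.
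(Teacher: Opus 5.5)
Your proposal is correct and follows the paper's proof: necessity comes from Theorems~\ref{thm:visible-recovery} and~\ref{thm:visibility-decomposition}, and sufficiency comes from building \(\widetilde g(t)=C+(\mu_{\rm obs}+\nu)([a,t))\) and checking that \(x\) remains a solution with \([x]_{\alpha,\widetilde g}\le L\). The only difference is packaging: you verify condition (ii) of Theorem~\ref{thm:atomic-dynamics} and invoke its converse, whereas the paper writes out the same H\"older estimate inline and simply asserts that the integral equation is unchanged; your version makes that assertion explicit, including the points that \(\nu\) cannot charge \(D_x\) and that the equation holds \(\mu_{\widetilde g}\)-almost everywhere.
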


\begin{proof}
Theorem~\ref{thm:visibility-decomposition} and Theorem~\ref{thm:visible-recovery} show that the Stieltjes mass outside \(Z_x\) is exactly the measure recovered from the observed state jumps. Hence every compatible Stieltjes measure has the stated form. Conversely, let \(\widetilde\mu:=\mu_{\rm obs}+\nu\) and for an arbitrary \(C\in\R\), define \(\widetilde g(t):=C+\widetilde\mu([a,t))\). Since \(F(t,x(t))=0\) for \(\nu\)-almost every \(t\),
\[
\int_{[a,t)}F(s,x(s))\,d\nu(s)=0
\]
for every \(t\in I\). The recovered atoms reproduce the observed state jumps, so the Stieltjes integral equation for \(x\) is unchanged. Moreover, for \(s<t\), the atomic seminorm estimate gives
\[
\|x(t)-x(s)\|
\le
L\sum_{\tau\in D_x\cap[s,t)}m_\tau^\alpha
\le
L\left(\sum_{\tau\in D_x\cap[s,t)}m_\tau\right)^\alpha
\le
L[\widetilde\mu([s,t))]^\alpha.
\]
Hence \(x\) is super-\(\widetilde g\)-H\"older with the same bound, and Proposition~\ref{prop:holder-implies-gac} gives \(\widetilde g\)-absolute continuity. Thus \(\widetilde g\) is compatible with the same trajectory.
\end{proof}

\begin{corollary}[Uniqueness of the Stieltjes clock]
\label{thm:complete-single-recovery}
The Stieltjes clock generating the observed trajectory together with its individual state jumps is uniquely determined up to an additive constant if and only if
\[
Z_x\cap[a,b)=\varnothing.
\]
Equivalently, \(F(t,x(t))\ne0\) for every \(t\in[a,b)\).
\end{corollary}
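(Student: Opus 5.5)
The plan is to read the corollary directly off Theorem~\ref{thm:compatible-single-clocks}. Every compatible Stieltjes measure has the form \(\mu_g=\mu_{\rm obs}+\nu\), with \(\nu\) a finite nonnegative Borel measure concentrated on \(Z_x\cap[a,b)\). Conversely, every such \(\nu\) produces a compatible clock \(\widetilde g(t)=C+\widetilde\mu([a,t))\). Since a left-continuous nondecreasing clock is determined up to an additive constant by its Lebesgue--Stieltjes measure through (2.1), uniqueness of the clock up to constants is equivalent to uniqueness of the compatible measure. That, in turn, amounts to the statement that the only admissible \(\nu\) is \(\nu=0\).

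\emph{Sufficiency.} Assume \(Z_x\cap[a,b)=\varnothing\). A nonnegative measure concentrated on the empty set is zero, so \(\nu=0\) and \(\mu_g=\mu_{\rm obs}\). The measure \(\mu_{\rm obs}\) is computed entirely from the observed data via (7.4), so any two compatible clocks \(g_1,g_2\) satisfy \(\mu_{g_1}=\mu_{g_2}\). Then \(g_1(t)-g_1(a)=\mu_{g_1}([a,t))=g_2(t)-g_2(a)\) for every \(t\in I\). Here I will note that left-continuity makes \(g\) on \([a,b]\) determined by \(\mu_g\) on \([a,b)\), including the value at \(b\).

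\emph{Necessity.} Assume some \(t_0\in Z_x\cap[a,b)\) exists. Put \(\nu:=\delta_{t_0}\), which is concentrated on \(Z_x\cap[a,b)\). The converse part of Theorem~\ref{thm:compatible-single-clocks} then yields a second compatible clock \(\widetilde g\) with measure \(\mu_{\rm obs}+\delta_{t_0}\). Its total mass \(\widetilde g(b)-\widetilde g(a)\) exceeds that of the clock built from \(\mu_{\rm obs}\) alone by \(1\), so the two clocks cannot differ by a constant. For this to work, at least one compatible clock must exist; this is guaranteed by the hypothesis of Theorem~\ref{thm:compatible-single-clocks} that the observed trajectory is a solution for some clock. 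The final equivalence, \(Z_x\cap[a,b)=\varnothing\) if and only if \(F(t,x(t))\ne0\) for all \(t\in[a,b)\), is just the definition (4.12).

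The main point needing care is the necessity direction. Appending an atom at \(t_0\) must not break compatibility, and it must not alter the atom already recorded in \(\mu_{\rm obs}\) if \(t_0\) happened to lie in \(D_x\). However, \(t_0\in Z_x\) forces \(F(t_0,x(t_0))=0\), and by (7.10) this means \(t_0\notin D_x\), so the new atom is disjoint from the observed ones. The new jump produces zero state change, consistent with (7.3). It also only enlarges \(\widetilde\mu([s,t))\), so the super-\(g\)-H\"older bound \(L\) is preserved. All of this is already covered by the proof of Theorem~\ref{thm:compatible-single-clocks}, so I will cite it rather than redo the estimate.
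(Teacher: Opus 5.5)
Your proposal is correct and follows essentially the paper's argument: sufficiency because $\nu$ must vanish when $Z_x\cap[a,b)=\varnothing$, and necessity by adding a point mass at some $t_0\in Z_x\cap[a,b)$ through the converse part of Theorem~\ref{thm:compatible-single-clocks}. Your extra checks fill in details the paper leaves implicit: that $t_0\notin D_x$ by (7.10), that the $\nu=0$ clock is itself compatible, and that the total masses differ, so the two clocks cannot differ by a constant.
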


\begin{proof}
If \(Z_x\cap[a,b)=\varnothing\), Theorem~\ref{thm:compatible-single-clocks} forces \(\nu=0\), so the measure recovered from the observed state jumps is the unique compatible Stieltjes measure. Conversely, if \(t_0\in Z_x\cap[a,b)\), then for every \(r>0\) the measure \(r\delta_{t_0}\) can be added to the undetected part without changing the observed trajectory. Hence the clock is not unique. Only increments of \(g\) enter the Stieltjes measure and the differential equation, so replacing \(g\) by \(g+C\) does not change the dynamics.
\end{proof}

\begin{example}[A clock jump not detected by the state]
\label{ex:silent-clock-atom}
Let \(I=[0,1]\), fix \(\alpha>1\), and define the known vector field \(F:I\times\R\to\R\) by \(F(t,y):=2t-1/2\). For each \(r>0\), define the left-continuous nondecreasing Stieltjes derivator \(g_r:I\to\R\) by
\[
g_r(t):=
\begin{cases}
0, & 0\le t\le \frac14,\\
r, & \frac14<t\le \frac34,\\
r+1, & \frac34<t\le1,
\end{cases}
\]
and the state \(x:I\to\R\) by
\[
x(t):=
\begin{cases}
0, & 0\le t\le \frac34,\\
1, & \frac34<t\le1.
\end{cases}
\]
The jump points of the clock at \(1/4\) and \(3/4\) have masses \(r\) and \(1\), respectively. At the first atom, \(F(1/4,x(1/4))=0\) and \(\Delta^+x(1/4)=0\), so the clock jump of size \(r\) does not change the state. At the second, \(F(3/4,x(3/4))=1\) and \(\Delta^+x(3/4)=1\), in agreement with the atomic relation. Since the only nonzero state jump has size one across a unit clock jump, Theorem~\ref{thm:pure-jump-characterization} gives \([x]_{\alpha,g_r}=1\). Thus the same complete state trajectory is generated for every \(r>0\), while the total Stieltjes mass is \(r+1\). Thus the parameter \(r>0\) is an arbitrary measure component concentrated on the zero set along the trajectory. This gives an explicit instance of the nonuniqueness described by Theorem~\ref{thm:compatible-single-clocks} and shows that complete observation of the state trajectory cannot recover Stieltjes mass placed where the known dynamics vanish.
\end{example}

\subsection{Affine recovery formulas}

For the affine model of Sections~5--6, observations of the individual state jumps determine the clock jump sizes, the total Stieltjes mass, and the smallest compatible super-\(g\)-H\"older constant explicitly.

\begin{proposition}[Recovery formulas for affine dynamics]
Under the affine assumptions of Sections~5--6, let the nonzero state jump points be \(\tau_1<\cdots<\tau_N\). Set \(m_j:=\Delta^+g(\tau_j)\), let \(x_{j-1}\in\R^n\) denote the state immediately before the \(j\)-th jump, set \(x_j:=x(\tau_j+)\), \(\Delta x_j:=x_j-x_{j-1}\), and \(R_j:=\|x_j-x_*\|\). Then
\[
m_j=\frac{\|\Delta x_j\|}{\sigma\|x_{j-1}-x_*\|}
=\frac1{\sigma}\left(\frac{R_j}{R_{j-1}}-1\right),
\tag{7.15}
\]
so
\[
M=\frac1{\sigma}\sum_{j=1}^N\left(\frac{R_j}{R_{j-1}}-1\right),
\tag{7.16}
\]
and the smallest H\"older constant compatible with the recovered clock is
\[
L_{\min}=\max_{1\le j\le N}\frac{\sigma R_{j-1}}{m_j^{\alpha-1}}.
\tag{7.17}
\]
\end{proposition}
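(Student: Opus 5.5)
The plan is to specialize Theorem~\ref{thm:visible-recovery} to the affine field \(F(t,x)=\sigma x+\eta=\sigma(x-x_*)\), and then use the affine recurrence (4.13) and the exact seminorm formula of Theorem~\ref{thm:atomic-dynamics}.

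First, as in Section~5, the affine field never vanishes along the trajectory because \(R_j>0\) for all \(j\). Hence Corollary~\ref{cor:purely-atomic-clock} gives \(\mu_g^c=0\), and every clock jump is a nonzero state jump. So \(D_g=\{\tau_1,\ldots,\tau_N\}\), and the state before the \(j\)-th jump is \(x(\tau_j)=x_{j-1}\), as in (4.13). Formula (7.4) then gives
\[
m_j=\frac{\|\Delta x_j\|}{\|F(\tau_j,x_{j-1})\|}=\frac{\|\Delta x_j\|}{\sigma\|x_{j-1}-x_*\|},
\]
which is the first equality in (7.15).

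For the second equality, write \(y_j=x_j-x_*\). The jump relation \(\Delta x_j=m_j\sigma(x_{j-1}-x_*)\) gives \(y_j=(1+\sigma m_j)y_{j-1}\). Since \(1+\sigma m_j>0\), taking norms yields \(R_j=(1+\sigma m_j)R_{j-1}\), and solving for \(m_j\) gives the second expression. Formula (7.16) then follows because \(\mu_g\) is purely atomic with atoms exactly at \(\tau_1,\ldots,\tau_N\), so \(M=g(b)-g(a)=\sum_j m_j\).

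For (7.17), apply the exact seminorm identity of Theorem~\ref{thm:atomic-dynamics}:
\[
[x]_{\alpha,g}=\max_j\frac{\|F(\tau_j,x(\tau_j))\|}{m_j^{\alpha-1}}=\max_j\frac{\sigma R_{j-1}}{m_j^{\alpha-1}}.
\]
Since \([x]_{\alpha,g}\le L\) holds exactly when \(L\) is at least this value, it is the smallest admissible \(L\), which is \(L_{\min}\).

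The only delicate point is the claim that every clock jump is a nonzero state jump, so that the sums over \(D_g\) reduce to \(j=1,\ldots,N\) and the pre-jump state is \(x_{j-1}\). This rests on the nonvanishing of the field along the trajectory, which follows by induction from \(R_j=(1+\sigma m_j)R_{j-1}\) and \(R_0>0\).
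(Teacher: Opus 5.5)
Your proposal is correct and follows the same route as the paper: the affine jump relation \(\Delta x_j=\sigma m_j(x_{j-1}-x_*)\) gives (7.15), summing the masses gives (7.16), and the exact seminorm identity of Theorem~\ref{thm:atomic-dynamics} gives (7.17). Your extra step, that the field never vanishes along the step trajectory so that \(\mu_g^c=0\) and \(D_g=\{\tau_1,\ldots,\tau_N\}\), is exactly what justifies \(M=\sum_j m_j\) in (7.16), which the paper's one-line proof leaves implicit; just carry out the induction \(R_j=(1+\sigma m_j)R_{j-1}>0\), which uses only (4.4) and the atomic relation, before you invoke Corollary~\ref{cor:purely-atomic-clock}, not after.
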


\begin{proof}
The affine jump relation is \(\Delta x_j=\sigma m_j(x_{j-1}-x_*)\), which gives (7.15). Summing the recovered masses gives (7.16), while the atomic seminorm identity of Theorem~\ref{thm:atomic-dynamics} gives (7.17).
\end{proof}

\begin{example}[Same endpoint and regularity bound, different Stieltjes clocks]
\label{ex:endpoint-nonid}
Let \(\sigma=1\), \(R_0=1\), \(\alpha=2\), and \(L=3\). A single atom \(m_1=1.25\) produces terminal multiplier \(2.25\). Two atoms \(m_1=m_2=0.5\) produce the same multiplier \((1.5)^2=2.25\). Both clocks are admissible under the same regularity bound: for the two-atom clock the first condition is \(0.5\ge1/3\), while the second is \(0.5\ge1.5/3\). Their total Stieltjes masses are nevertheless \(1.25\) and \(1.00\). Thus the endpoint determines only the multiplicative response \(\prod_{j=1}^{N}(1+\sigma m_j)\), not the number of jumps, the individual masses, or even their total mass. Observations of the individual state jumps contain strictly more clock information than terminal observations.
\end{example}

\section{Recovery of the clock for nonconvex Stieltjes differential inclusions}

For nonconvex set-valued dynamics, an observed state jump need not determine a unique clock jump size because several admissible velocities may lie on the same jump direction. We first determine the masses compatible with one observed jump and then characterize the Stieltjes measures compatible with the whole observed trajectory.

Let \(g:I\to\R\) be a Stieltjes derivator as in Section~2, and let \(\mathcal F:I\times\R^n\rightrightarrows\R^n\) be a nonempty-valued multifunction, not assumed convex. Consider
\[
x'_g(t)\in\mathcal F(t,x(t)).
\tag{8.1}
\]
A solution is a \(g\)-absolutely continuous \(x:I\to\R^n\) satisfying (8.1) \(\mu_g\)-almost everywhere. For \(m>0\) and \(A\subset\R^n\), scalar multiplication of a set is understood as \(mA:=\{mv:v\in A\}\); thus, at an atom \(\tau\in D_g\), the inclusion may be read as \(\Delta^+x(\tau)\in m_\tau\mathcal F(\tau,x(\tau))\). No general existence assertion is made here; all structural conclusions are conditional on the stated solution. Assume again \([x]_{\alpha,g}\le L\), \(L>0\), \(\alpha>1\). For the inverse interpretations below, the multifunction is regarded as known and the state jumps as resolved.

\begin{theorem}[Continuous and jump parts of a solution]
\label{thm:inclusion-atomic}
Let \(g:I\to\R\) be left-continuous and nondecreasing, let \(\alpha>1\) and \(L>0\), and let \(\mathcal F:I\times\R^n\rightrightarrows\R^n\) be nonempty-valued. Suppose \(x:I\to\R^n\) is a \(g\)-absolutely continuous solution of (8.1) and satisfies \([x]_{\alpha,g}\le L\). Then, for \(\mu_g^c\)-almost every \(t\in I\),
\[
0\in\mathcal F(t,x(t)).
\tag{8.2}
\]
At each jump point \(\tau\in D_g\), equivalently at each atom of \(\mu_g\), writing \(m_\tau:=\Delta^+g(\tau)>0\), there exists \(v_\tau\in\mathcal F(\tau,x(\tau))\) such that
\[
\Delta^+x(\tau)=m_\tau v_\tau,
\tag{8.3}
\]
and
\[
\|v_\tau\|\le Lm_\tau^{\alpha-1}.
\tag{8.4}
\]
\end{theorem}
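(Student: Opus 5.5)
The plan mirrors the single-valued argument of Theorem~\ref{thm:atomic-dynamics}, replacing the identity \(x'_g=F(t,x)\) by membership in \(\mathcal F(t,x)\). The first observation is that \(x\) is super-\(g\)-H\"older with \([x]_{\alpha,g}\le L<\infty\). It also satisfies (2.8), since \(g\)-absolute continuity forces \(x(s)=x(t)\) when \(g(s)=g(t)\). So Proposition~\ref{prop:continuous-extinction} and Theorem~\ref{thm:pure-jump-characterization} both apply directly to \(x\).

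For the continuous part, I use Proposition~\ref{prop:continuous-extinction}, which gives \(x'_g(t)=0\) for \(\mu_g^c\)-almost every \(t\). The inclusion (8.1) holds off a \(\mu_g\)-null set \(N\). Since \(\mu_g^c\le\mu_g\), the set \(N\) is also \(\mu_g^c\)-null. Intersecting the two full-measure sets yields a \(\mu_g^c\)-full set on which \(0=x'_g(t)\in\mathcal F(t,x(t))\), which is (8.2). Note that only existence of the derivative and membership are used here; no convexity or measurable selection is needed.

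For the atoms, fix \(\tau\in D_g\). Because \(\mu_g(\{\tau\})=m_\tau>0\), the point \(\tau\) cannot belong to the exceptional \(\mu_g\)-null set. Hence (8.1) holds at \(\tau\) itself, with \(x'_g(\tau)\) given by the jump quotient (2.12); the right limit \(x(\tau+)\) exists by the Cauchy argument in the proof of Theorem~\ref{thm:pure-jump-characterization}. I set \(v_\tau:=x'_g(\tau)=\Delta^+x(\tau)/m_\tau\in\mathcal F(\tau,x(\tau))\), which gives (8.3) immediately. For (8.4), the estimate (3.5) (equivalently (3.4)) gives \(\|\Delta^+x(\tau)\|\le L m_\tau^\alpha\). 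Dividing by \(m_\tau\) yields \(\|v_\tau\|\le Lm_\tau^{\alpha-1}\).

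The only delicate point is justifying that "\(\mu_g\)-almost everywhere" really covers every atom and that the derivative at an atom is the jump quotient. Both follow from the positivity of \(m_\tau\) and the convention (2.12), exactly as in the proof of Theorem~\ref{thm:atomic-dynamics}, so I expect no real obstacle. Nonconvexity plays no role in this direct part; it matters only in the later inverse analysis, where \(v_\tau\) need not be unique.
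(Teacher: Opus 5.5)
Your proposal is correct and follows essentially the same route as the paper. You use Proposition~\ref{prop:continuous-extinction} for the nonatomic part, the positivity of \(m_\tau\) to get the inclusion at each atom with \(v_\tau:=x'_g(\tau)\), and the atomic H\"older estimate (3.5) for (8.4). Your extra checks, namely that \(g\)-absolute continuity yields (2.8) and that \(\mu_g\)-null sets are \(\mu_g^c\)-null, only make explicit steps the paper leaves implicit.
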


\begin{proof}
Proposition~\ref{prop:continuous-extinction} gives \(x'_g=0\) \(\mu_g^c\)-almost everywhere, and hence (8.2). Since every \(\tau\in D_g\) has positive \(\mu_g\)-mass, the inclusion holds at each atom of \(\mu_g\). Setting \(v_\tau:=x'_g(\tau)\) gives (8.3), and the atomic H\"older estimate gives (8.4).
\end{proof}

Fix an observed state jump \(\tau\in D_x\), set \(z:=\Delta^+x(\tau)\ne0\), and write \(\xi:=x(\tau)\in\R^n\). Any compatible Stieltjes clock must have a positive jump at \(\tau\). A candidate mass \(m>0\) is locally compatible with the observation exactly when \(z/m\in\mathcal F(\tau,\xi)\).

\begin{definition}[Compatible jump sizes at one observed state jump]
\label{def:local-ambiguity}
For \(\tau\in I\), \(\xi\in\R^n\), and \(z\in\R^n\setminus\{0\}\), define
\[
\mathcal M_{\mathcal F}(\tau,\xi;z)
:=\left\{m>0:\frac zm\in\mathcal F(\tau,\xi)\right\}.
\tag{8.5}
\]
This set consists of all masses compatible with the observed jump; membership alone does not imply compatibility with the whole observed trajectory.
\end{definition}

For \(z\in\R^n\setminus\{0\}\), define the positive ray
\[
R_z:=\{\lambda z:\lambda>0\}.
\tag{8.6}
\]

The atomic inclusion relation can be written in terms of the positive ray generated by the observed jump.

\begin{theorem}[Characterization by the observed jump direction]
\label{thm:ray-intersection}
Let \(\mathcal F:I\times\R^n\rightrightarrows\R^n\) be nonempty-valued. Let \(\tau\in I\), \(\xi\in\R^n\), and \(z\in\R^n\setminus\{0\}\). Define \(\Psi:\mathcal F(\tau,\xi)\cap R_z\to(0,\infty)\) by
\[
\Psi(v):=\frac{\|z\|}{\|v\|}.
\]
Then \(\Psi\) is a bijection onto \(\mathcal M_{\mathcal F}(\tau,\xi;z)\). Thus the locally compatible jump sizes are determined exactly by the intersection of the velocity set with the positive ray through the observed jump.
\end{theorem}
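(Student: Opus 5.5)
The plan is to check directly that \(\Psi\) maps \(\mathcal F(\tau,\xi)\cap R_z\) into \(\mathcal M_{\mathcal F}(\tau,\xi;z)\), and then to construct an explicit inverse \(\Phi:\mathcal M_{\mathcal F}(\tau,\xi;z)\to\mathcal F(\tau,\xi)\cap R_z\) given by \(\Phi(m):=z/m\). Both compositions will then be shown to equal the identity. The only ingredient needed is the elementary description of the positive ray: every \(v\in R_z\) can be written as \(v=\lambda z\) for a unique \(\lambda>0\), namely \(\lambda=\|v\|/\|z\|\). This uses \(z\ne0\).

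\emph{Well-definedness of \(\Psi\).} Let \(v\in\mathcal F(\tau,\xi)\cap R_z\) and write \(v=\lambda z\) with \(\lambda>0\). Then \(v\ne0\), and \(\Psi(v)=\|z\|/\|v\|=1/\lambda>0\). Moreover \(z/\Psi(v)=\lambda z=v\in\mathcal F(\tau,\xi)\), so \(\Psi(v)\in\mathcal M_{\mathcal F}(\tau,\xi;z)\) by (8.5).

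\emph{Well-definedness of \(\Phi\).} Let \(m\in\mathcal M_{\mathcal F}(\tau,\xi;z)\). By definition \(z/m\in\mathcal F(\tau,\xi)\), and since \(1/m>0\) we also have \(z/m\in R_z\). Hence \(\Phi(m)\in\mathcal F(\tau,\xi)\cap R_z\).

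\emph{Inverse identities.} For \(m\) in the mass set, \(\Psi(\Phi(m))=\|z\|/\|z/m\|=m\) because \(m>0\). For \(v=\lambda z\) in the intersection, \(\Phi(\Psi(v))=\lambda z=v\), as computed in the first step. Together these give injectivity and surjectivity of \(\Psi\), so \(\Psi\) is a bijection.

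There is no substantive obstacle here. The one point that needs care is the use of the positive ray rather than the whole line through \(z\). A velocity \(v=-\lambda z\in\mathcal F(\tau,\xi)\) would give the same norm ratio but would not satisfy \(z/m=v\) for any \(m>0\). Restricting the domain to \(R_z\) is therefore exactly what makes \(\Psi\) land in the compatible set and makes \(\Phi\) its inverse. The final sentence of the statement is then just a restatement of this bijection.
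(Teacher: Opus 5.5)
Your proof is correct and follows essentially the same route as the paper: write \(v=\lambda z\) with \(\lambda>0\), note that \(m=1/\lambda=\|z\|/\|v\|\) satisfies \(z/m=v\in\mathcal F(\tau,\xi)\), and use \(m\mapsto z/m\) as the inverse map. Your explicit check of both compositions spells out what the paper states in one line, namely that the two constructions are inverse to each other.
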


\begin{proof}
If \(v\in\mathcal F(\tau,\xi)\cap R_z\), then \(v=\lambda z\) for a unique \(\lambda>0\). Setting \(m=1/\lambda=\|z\|/\|v\|\) gives \(z/m=v\in\mathcal F(\tau,\xi)\), so \(m\in\mathcal M_{\mathcal F}(\tau,\xi;z)\). Conversely, if \(m\in\mathcal M_{\mathcal F}(\tau,\xi;z)\), then \(v=z/m\) belongs to \(\mathcal F(\tau,\xi)\cap R_z\) and satisfies \(m=\|z\|/\|v\|\). The two constructions are inverse to each other.
\end{proof}

\begin{example}[Two compatible jump sizes in \(\R^2\)]
\label{ex:two-clock-candidates}
Let \(I=[0,1]\), let \(e_1=(1,0)\) and \(e_2=(0,1)\) be the standard basis vectors of \(\R^2\), and define the known nonconvex multifunction
\[
\mathcal F:I\times\R^2\rightrightarrows\R^2,
\qquad
\mathcal F(t,y):=\{e_1,2e_1,e_2\}
\]
for every \((t,y)\in I\times\R^2\). Fix \(\tau=1/2\in I\), \(\xi=(0,0)\in\R^2\), and \(z=e_1\in\R^2\). Then \(R_z=\{\lambda e_1:\lambda>0\}\) and \(\mathcal F(\tau,\xi)\cap R_z=\{e_1,2e_1\}\); the third velocity \(e_2\) does not contribute to the observed jump direction. By Theorem~\ref{thm:ray-intersection},
\[
\mathcal M_{\mathcal F}(\tau,\xi;e_1)=\left\{\frac12,1\right\}.
\tag{8.7}
\]
Thus the same observed state jump is compatible with two different jump sizes. The super-\(g\)-H\"older bound can remove some of these candidates.
\end{example}

The super-\(g\)-H\"older condition provides an additional filter. By the atomic H\"older bound, every compatible mass must also satisfy
\[
m\ge\left(\frac{\|z\|}{L}\right)^{1/\alpha}.
\tag{8.8}
\]
Thus the H\"older bound gives a lower bound for the jump size, equivalently an upper bound for the admissible velocity magnitude along the observed jump direction.

\begin{definition}[Compatible masses under the H\"older bound]
\label{def:filtered-local-ambiguity}
For \(\tau\in I\), \(\xi\in\R^n\), \(z\in\R^n\setminus\{0\}\), \(\alpha>1\), and \(L>0\), define
\[
\mathcal M_{\mathcal F}^{\alpha,L}(\tau,\xi;z)
:=
\mathcal M_{\mathcal F}(\tau,\xi;z)
\cap
\left[\left(\frac{\|z\|}{L}\right)^{1/\alpha},\infty\right).
\tag{8.9}
\]
Equivalently, the corresponding velocity must satisfy
\[
\|v\|\le L^{1/\alpha}\|z\|^{(\alpha-1)/\alpha}.
\tag{8.10}
\]
\end{definition}

The preceding characterization gives the following uniqueness criterion.

\begin{corollary}[Uniqueness under the H\"older bound]
\label{cor:regularity-assisted-identifiability}
Let \(\mathcal F:I\times\R^n\rightrightarrows\R^n\) be nonempty-valued, let \(\tau\in I\), \(\xi\in\R^n\), and \(z\in\R^n\setminus\{0\}\), and fix \(\alpha>1\) and \(L>0\). The jump size is locally unique under both the inclusion and the super-\(g\)-H\"older regularity information if and only if
\[
\operatorname{card}\!\left(
\mathcal F(\tau,\xi)\cap R_z\cap
\{v\in\R^n:\|v\|\le L^{1/\alpha}\|z\|^{(\alpha-1)/\alpha}\}
\right)=1.
\tag{8.11}
\]
\end{corollary}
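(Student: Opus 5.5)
The argument will be a direct combination of Theorem~\ref{thm:ray-intersection} with the lower mass bound (8.8). By Definition~\ref{def:filtered-local-ambiguity}, the jump size is locally unique exactly when the filtered set \(\mathcal M_{\mathcal F}^{\alpha,L}(\tau,\xi;z)\) has cardinality one. So the task reduces to showing that the bijection \(\Psi\) of Theorem~\ref{thm:ray-intersection} maps the velocity set
\[
V_{\alpha,L}:=\mathcal F(\tau,\xi)\cap R_z\cap\{v\in\R^n:\|v\|\le L^{1/\alpha}\|z\|^{(\alpha-1)/\alpha}\}
\]
bijectively onto \(\mathcal M_{\mathcal F}^{\alpha,L}(\tau,\xi;z)\). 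Since bijections preserve cardinality, this gives the equivalence with (8.11).

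The first step is to restrict \(\Psi\). Because \(\Psi\) is already a bijection from \(\mathcal F(\tau,\xi)\cap R_z\) onto \(\mathcal M_{\mathcal F}(\tau,\xi;z)\), its restriction to any subset is a bijection onto that subset's image. It therefore suffices to check that, for \(v\in\mathcal F(\tau,\xi)\cap R_z\) and \(m=\Psi(v)=\|z\|/\|v\|\),
\[
m\ge\left(\frac{\|z\|}{L}\right)^{1/\alpha}
\iff
\|v\|\le L^{1/\alpha}\|z\|^{(\alpha-1)/\alpha}.
\]
This is the equivalence (8.10) already asserted in Definition~\ref{def:filtered-local-ambiguity}. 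For completeness, I will verify it by elementary rearrangement, using the positivity of \(\|z\|\) and \(\|v\|\):
\[
\frac{\|z\|}{\|v\|}\ge \|z\|^{1/\alpha}L^{-1/\alpha}
\iff
\|v\|\le \|z\|^{1-1/\alpha}L^{1/\alpha}.
\]

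With that equivalence in hand, the image of \(V_{\alpha,L}\) under \(\Psi\) is exactly
\[
\mathcal M_{\mathcal F}(\tau,\xi;z)\cap\left[\left(\frac{\|z\|}{L}\right)^{1/\alpha},\infty\right)=\mathcal M_{\mathcal F}^{\alpha,L}(\tau,\xi;z),
\]
and the cardinalities agree. I do not expect a real obstacle; the only delicate point is interpretive. "Locally unique under both the inclusion and the super-\(g\)-H\"older regularity information" should be read as the filtered set being a singleton, which also encodes existence, rather than having at most one element. I will state this explicitly, noting that the filter (8.8) is itself necessary for any compatible mass, by (8.4) with \(\|z\|=m\|v\|\).
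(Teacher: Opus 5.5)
Your proposal is correct and follows essentially the same route as the paper. The paper's proof also restricts the bijection \(\Psi\) of Theorem~\ref{thm:ray-intersection} to the velocities satisfying (8.10), notes that these correspond exactly to the masses passing the filter (8.8), and compares cardinalities. Your explicit rearrangement check and your remark that local uniqueness means the filtered set is a singleton (which includes existence) only make explicit what the paper leaves implicit.
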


\begin{proof}
By Theorem~\ref{thm:ray-intersection}, locally compatible jump sizes are in one-to-one correspondence with velocities in \(\mathcal F(\tau,\xi)\cap R_z\). Definition~\ref{def:filtered-local-ambiguity} restricts these velocities exactly to those satisfying the regularity bound in (8.10). Hence the jump size is unique under the H\"older bound if and only if the displayed intersection contains exactly one velocity.
\end{proof}

\begin{example}[The H\"older bound leaves one compatible mass]
To see how the super-\(g\)-H\"older constraint acts on the two jump-size candidates found above, continue with \(I=[0,1]\), the multifunction \(\mathcal F:I\times\R^2\rightrightarrows\R^2\), and the points \(\tau=1/2\), \(\xi=(0,0)\), and \(z=e_1\) from Example~\ref{ex:two-clock-candidates}. Take \(\alpha=2\) and \(L=2\). The regularity condition requires \(m\ge1/\sqrt2\). Since the compatible-mass set is \(\{1/2,1\}\),
\[
\mathcal M_{\mathcal F}^{2,2}(\tau,\xi;e_1)=\{1\}.
\tag{8.12}
\]
Thus the super-\(g\)-H\"older constraint eliminates the smaller jump-size candidate and makes the mass locally identifiable, even though the known velocity set remains nonconvex and contains velocities in two independent directions.
\end{example}

For a multifunction \(\mathcal F:I\times\R^n\rightrightarrows\R^n\), define its pointwise closed convexification at each \((t,\xi)\in I\times\R^n\) by
\[
(\overline{\operatorname{co}}\mathcal F)(t,\xi)
:=
\overline{\operatorname{co}}\bigl(\mathcal F(t,\xi)\bigr).
\]

\begin{theorem}[Convexification can enlarge the set of compatible jump sizes]
\label{thm:convexification}
Let \(\mathcal F:I\times\R^n\rightrightarrows\R^n\) be nonempty-valued, let \(\tau\in I\), \(\xi\in\R^n\), and \(z\in\R^n\setminus\{0\}\). For every \(\alpha>1\) and \(L>0\),
\[
\mathcal M_{\mathcal F}(\tau,\xi;z)
\subseteq
\mathcal M_{\overline{\operatorname{co}}\mathcal F}(\tau,\xi;z),
\tag{8.13}
\]
and
\[
\mathcal M_{\mathcal F}^{\alpha,L}(\tau,\xi;z)
\subseteq
\mathcal M_{\overline{\operatorname{co}}\mathcal F}^{\alpha,L}(\tau,\xi;z).
\tag{8.14}
\]
\end{theorem}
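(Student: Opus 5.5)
The plan is to argue directly from Definition~\ref{def:local-ambiguity} and Definition~\ref{def:filtered-local-ambiguity}, using only the pointwise inclusion of a set in its closed convex hull. Fix \(\tau\in I\), \(\xi\in\R^n\) and \(z\ne0\). The basic observation is
\[
\mathcal F(\tau,\xi)\subseteq\overline{\operatorname{co}}\bigl(\mathcal F(\tau,\xi)\bigr)=(\overline{\operatorname{co}}\mathcal F)(\tau,\xi),
\]
which holds because every set lies in its convex hull, and the convex hull lies in its closure. The multifunction \(\overline{\operatorname{co}}\mathcal F\) is nonempty-valued, since \(\mathcal F\) is, so both compatible-mass sets are well defined.

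For (8.13), I would take \(m\in\mathcal M_{\mathcal F}(\tau,\xi;z)\). Then \(m>0\) and \(z/m\in\mathcal F(\tau,\xi)\). By the inclusion above, \(z/m\in(\overline{\operatorname{co}}\mathcal F)(\tau,\xi)\), so \(m\in\mathcal M_{\overline{\operatorname{co}}\mathcal F}(\tau,\xi;z)\) by (8.5). The same step can also be phrased through Theorem~\ref{thm:ray-intersection}: intersecting the enlarged velocity set with \(R_z\) gives a larger set, and the bijection \(\Psi\) preserves this inclusion. The direct membership argument is shorter, however.

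For (8.14), I would intersect both sides of (8.13) with the same interval \(\bigl[(\|z\|/L)^{1/\alpha},\infty\bigr)\). This interval depends only on \(z\), \(\alpha\) and \(L\), not on the multifunction, so the inclusion carries over unchanged and gives (8.14) by (8.9).

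No step here is a real obstacle. The only point needing care is to note that the Hölder filter is the same interval for both multifunctions, which is what allows (8.13) to pass to the filtered sets. Optionally, I would add a remark, modelled on Example~\ref{ex:two-clock-candidates}, that the inclusions can be strict. There \(\overline{\operatorname{co}}\{e_1,2e_1,e_2\}\cap R_{e_1}=\{\lambda e_1:1\le\lambda\le2\}\), so the compatible masses become the whole interval \([1/2,1]\), and under \(\alpha=L=2\) the filtered set becomes \([1/\sqrt2,1]\). Convexification therefore destroys the local identifiability obtained in (8.12).
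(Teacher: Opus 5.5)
Your proposal is correct and matches the paper's proof, which also derives both inclusions directly from \(\mathcal F(\tau,\xi)\subseteq(\overline{\operatorname{co}}\mathcal F)(\tau,\xi)\). Your observation that the H\"older filter interval \(\bigl[(\|z\|/L)^{1/\alpha},\infty\bigr)\) does not depend on the multifunction is exactly the step that passes (8.13) to (8.14).
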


\begin{proof}
Both statements follow from
\(\mathcal F(\tau,\xi)\subseteq(\overline{\operatorname{co}}\mathcal F)(\tau,\xi)\).
\end{proof}

The inclusions may remain strict even after the regularity filter is applied.

Return to the two-dimensional multifunction of Example~\ref{ex:two-clock-candidates}. Its pointwise closed convexification is the triangle
\[
\bigl(\overline{\operatorname{co}}\mathcal F\bigr)(t,y)
=\operatorname{co}\{e_1,2e_1,e_2\},
\qquad (t,y)\in I\times\R^2.
\]
At \(\tau=1/2\), \(\xi=(0,0)\), and \(z=e_1\), the intersection with the observed positive ray is \(\{\lambda e_1:1\le\lambda\le2\}\). Hence the compatible-mass set is \([1/2,1]\), while under \(\alpha=2\) and \(L=2\) the set satisfying the H\"older bound is \([1/\sqrt2,1]\). Thus convexification turns the two discrete jump-size candidates into a continuum and destroys the uniqueness under the H\"older bound obtained in the preceding example.

For a zero observed state jump at \(t\in[a,b)\), an arbitrary positive clock jump size is compatible with zero state change exactly when
\[
0\in\mathcal F(t,x(t)).
\tag{8.15}
\]
Similarly, nonatomic Stieltjes mass that leaves the state unchanged requires zero to be available as a velocity selection \(\mu_g^c\)-almost everywhere. Define
\[
Z_{\mathcal F}(x):=\{t\in I:0\in\mathcal F(t,x(t))\}.
\tag{8.16}
\]
Additional Stieltjes mass that does not change the observed state can occur only on this set.

The same atomic estimate also bounds the number of nonzero state jumps without any convexity assumption.

\begin{corollary}[Lower bound for nonzero state jumps and their number]
\label{cor:active-velocity-gap}
Let \(x:I\to\R^n\) satisfy the assumptions of Theorem~\ref{thm:inclusion-atomic}. Assume there exists a constant \(c>0\) such that, for every \(t\in I\) and every nonzero \(v\in\mathcal F(t,x(t))\), one has \(\|v\|\ge c\). Then every nonzero state jump point \(\tau\in D_g\) satisfies
\[
m_\tau\ge\left(\frac{c}{L}\right)^{1/(\alpha-1)},
\tag{8.17}
\]
and, if \(M>0\) and \(\mu_g([a,b))\le M\), then
\[
\operatorname{card}\{\tau\in D_g:\Delta^+x(\tau)\ne0\}
\le
\left\lfloor M\left(\frac{L}{c}\right)^{1/(\alpha-1)}\right\rfloor.
\tag{8.18}
\]
No convexity assumption is required.
\end{corollary}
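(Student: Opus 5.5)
The argument runs parallel to Corollary~\ref{cor:active-lower-bound}, with the single-valued identity \(\Delta^+x(\tau)=m_\tau F(\tau,x(\tau))\) replaced by the atomic inclusion relation of Theorem~\ref{thm:inclusion-atomic}. Fix a nonzero state jump point \(\tau\in D_g\), so \(\Delta^+x(\tau)\ne0\). By (8.3) there is \(v_\tau\in\mathcal F(\tau,x(\tau))\) with \(\Delta^+x(\tau)=m_\tau v_\tau\). Since \(m_\tau>0\) and the jump is nonzero, \(v_\tau\ne0\). The velocity-gap hypothesis applied at \(t=\tau\) then gives \(\|v_\tau\|\ge c\). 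Combining this with (8.4) yields \(c\le\|v_\tau\|\le Lm_\tau^{\alpha-1}\). Because \(\alpha-1>0\), we can take the \((\alpha-1)\)-th root, and (8.17) follows.

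For the counting bound (8.18), let \(E\) be any finite subset of \(\{\tau\in D_g:\Delta^+x(\tau)\ne0\}\). Each element of \(E\) has mass at least \(m_*:=(c/L)^{1/(\alpha-1)}\). The atoms are disjoint subsets of \([a,b)\), so
\[
\operatorname{card}(E)\,m_*\le\sum_{\tau\in E}m_\tau\le\mu_g([a,b))\le M.
\]
Since this bound is uniform over finite \(E\), the set of nonzero state jump points is itself finite. Its cardinality is then at most \(M/m_*=M(L/c)^{1/(\alpha-1)}\), and since it is an integer, taking the floor gives (8.18).

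No convexity enters at any stage. Only the existence of some selection \(v_\tau\) realizing the jump is used, and this is supplied by Theorem~\ref{thm:inclusion-atomic} for an arbitrary nonempty-valued \(\mathcal F\). The only point needing care is to note that \(v_\tau\ne0\), so that the gap hypothesis, which is imposed only on nonzero velocities, actually applies. The remaining steps are routine.
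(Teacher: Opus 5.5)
Your proof is correct and follows the paper's argument: combine the atomic relation \(\Delta^+x(\tau)=m_\tau v_\tau\) and the bound \(\|v_\tau\|\le Lm_\tau^{\alpha-1}\) with the velocity gap to get (8.17), then sum the minimal masses against \(\mu_g([a,b))\le M\) to get (8.18). You also make explicit two points the paper leaves implicit: that \(v_\tau\ne0\) at a nonzero state jump, so the gap hypothesis on nonzero velocities actually applies, and that finiteness of the jump set follows from the uniform bound over finite subsets.
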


\begin{proof}
At a nonzero state jump point, Theorem~\ref{thm:inclusion-atomic} gives \(\|v_\tau\|\le Lm_\tau^{\alpha-1}\). Combining this with \(\|v_\tau\|\ge c\) yields the lower bound in (8.17). Summing these minimum masses over all nonzero state jump points gives (8.18).
\end{proof}

The preceding results combine to give a characterization of all Stieltjes measures compatible with an observed trajectory and its individual state jumps.

\begin{theorem}[All Stieltjes measures compatible with a trajectory]
\label{thm:compatible-inclusion-clocks}
Let \(x:I\to\R^n\) be an observed trajectory, including its individual state jumps, satisfying the assumptions of Theorem~\ref{thm:inclusion-atomic}. A finite Stieltjes measure \(\mu\), with associated clock \(g_\mu(t):=C+\mu([a,t))\) for an arbitrary \(C\in\R\), generates the same observed trajectory and is compatible with the differential inclusion and the bound \([x]_{\alpha,g_\mu}\le L\) if and only if
\[
\mu=\sum_{\tau\in D_x}m_\tau\delta_\tau+\nu,
\]
where, for every \(\tau\in D_x\),
\[
m_\tau\in\mathcal M_{\mathcal F}^{\alpha,L}\bigl(\tau,x(\tau);\Delta^+x(\tau)\bigr),
\]
the selected masses satisfy
\[
\sum_{\tau\in D_x}m_\tau<\infty,
\]
and \(\nu\) is a finite nonnegative Borel measure concentrated on \(Z_{\mathcal F}(x)\setminus D_x\).
\end{theorem}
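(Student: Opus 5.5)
The plan is to prove the two directions separately, following the single-valued argument of Theorem~\ref{thm:compatible-single-clocks} and replacing the exact recovery formula (7.4) by the compatible-mass sets of Definition~\ref{def:filtered-local-ambiguity}.

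\emph{Necessity.} Let \(\mu\) be compatible. Since \([x]_{\alpha,g_\mu}\le L\), Theorem~\ref{thm:pure-jump-characterization} applied with the clock \(g_\mu\) gives \(D_x\subseteq D_{g_\mu}\). Set \(m_\tau:=\mu(\{\tau\})>0\) for \(\tau\in D_x\). Theorem~\ref{thm:inclusion-atomic} then yields \(v_\tau\in\mathcal F(\tau,x(\tau))\) with \(\Delta^+x(\tau)=m_\tau v_\tau\). Hence \(m_\tau\in\mathcal M_{\mathcal F}(\tau,x(\tau);\Delta^+x(\tau))\). The atomic estimate (3.5), \(\|\Delta^+x(\tau)\|\le L m_\tau^\alpha\), is exactly the filter (8.8), so \(m_\tau\) lies in the filtered set. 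Summability follows from \(\sum m_\tau\le\mu([a,b))<\infty\). Define \(\nu:=\mu-\sum_{\tau\in D_x}m_\tau\delta_\tau\). This is a nonnegative finite measure with \(\nu(D_x)=0\), and it remains to show that \(\nu\) is concentrated on \(Z_{\mathcal F}(x)\). Its atoms lie at points \(t\in D_{g_\mu}\setminus D_x\), where the zero jump \(0=m_t v_t\) with \(m_t>0\) forces \(0=v_t\in\mathcal F(t,x(t))\). Its nonatomic part is \(\mu^c\), and (8.2) gives \(0\in\mathcal F(t,x(t))\) for \(\mu^c\)-a.e.\ \(t\). Thus \(\nu\) is carried by \(Z_{\mathcal F}(x)\setminus D_x\) up to a null set, which is what "concentrated" means.

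\emph{Sufficiency.} Given masses \(m_\tau\) in the filtered sets with finite sum and \(\nu\) as described, set \(\mu:=\sum_{\tau\in D_x} m_\tau\delta_\tau+\nu\) and \(g_\mu:=C+\mu([a,t))\). This clock is left-continuous and nondecreasing. First I will show that \(x\) has the pure-jump form (3.2) with jumps only on \(D_x\); this is forced by the hypothesis that \(x\) is super-\(g\)-H\"older for the original clock, via Theorem~\ref{thm:pure-jump-characterization}. Next, each point of \(D_x\) is an atom of \(\mu\) with mass at least \(m_\tau\), and it equals \(m_\tau\) because \(\nu(D_x)=0\). The filter gives \(\|\Delta^+x(\tau)\|\le L m_\tau^\alpha\), and the chain of inequalities from the proof of Theorem~\ref{thm:compatible-single-clocks} then gives \(\|x(t)-x(s)\|\le L\,\mu([s,t))^\alpha\). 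Hence \([x]_{\alpha,g_\mu}\le L\), and by Proposition~\ref{prop:holder-implies-gac} \(x\) is \(g_\mu\)-absolutely continuous. For the inclusion, note that \(\mu\)-a.e.\ point falls into one of three cases, and I will check each:
\begin{itemize}[label={}]
\item at \(\tau\in D_x\), the derivative is \(x'_{g_\mu}(\tau)=\Delta^+x(\tau)/m_\tau\in\mathcal F\) by choice of \(m_\tau\);
\item at atoms of \(\nu\), the derivative is \(0\in\mathcal F\), since these points lie in \(Z_{\mathcal F}(x)\) and \(x\) has no jump there;
\item \(\nu^c\)-a.e., Proposition~\ref{prop:continuous-extinction} gives \(x'_{g_\mu}=0\in\mathcal F(t,x(t))\).
\end{itemize}

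The main obstacle is the bookkeeping about \(\nu\) in both directions. In the necessity direction, "concentrated on \(Z_{\mathcal F}(x)\)" has to be read as \(\nu(I\setminus Z_{\mathcal F}(x))=0\), and measurability of \(Z_{\mathcal F}(x)\) may need to be handled via outer measure or left implicit, as the paper does elsewhere. In the sufficiency direction, one must make sure that the derivative of \(x\) with respect to \(g_\mu\) vanishes at \(\nu\)-atoms lying outside \(D_x\); this holds because \(\Delta^+x=0\) there. One must also exclude \(\mu\)-mass on \(C_{g_\mu}\), which is null by (2.10).
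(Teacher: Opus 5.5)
Your proposal is correct and follows essentially the same route as the paper. For necessity you apply Theorem~\ref{thm:inclusion-atomic} with the clock \(g_\mu\) at each observed jump together with the atomic H\"older filter, and for sufficiency you use the chain \(\|x(t)-x(s)\|\le L\,\mu([s,t))^\alpha\) followed by Proposition~\ref{prop:holder-implies-gac}. Your version is in fact more careful than the paper's: it checks that \(x'_{g_\mu}\) actually vanishes on the \(\nu\)-part (zero jumps at \(\nu\)-atoms, Proposition~\ref{prop:continuous-extinction} on \(\nu^c\)) instead of merely ``choosing the zero velocity'', and it makes explicit that \(\nu(D_x)=0\) pins down \(\mu(\{\tau\})=m_\tau\).
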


\begin{proof}
For a compatible clock, Theorem~\ref{thm:inclusion-atomic} and Definition~\ref{def:filtered-local-ambiguity} force each observed-jump mass to satisfy the H\"older bound. Finiteness of the Stieltjes measure gives \(\sum_{\tau\in D_x}m_\tau<\infty\). At points outside \(D_x\), any additional Stieltjes mass can leave the state unchanged only where zero velocity is admissible, hence it is concentrated on \(Z_{\mathcal F}(x)\setminus D_x\).

Conversely, choose masses and \(\nu\) as stated. For each \(\tau\in D_x\), select \(v_\tau\in\mathcal F(\tau,x(\tau))\) with \(\Delta^+x(\tau)=m_\tau v_\tau\); the H\"older bound in the definition gives \(\|\Delta^+x(\tau)\|\le Lm_\tau^\alpha\). On the \(\nu\)-part choose the zero velocity. Let \(\mu\) be the resulting measure and for an arbitrary \(C\in\R\), define \(\widetilde g(t):=C+\mu([a,t))\). Then the inclusion holds with respect to \(\widetilde g\). Moreover, for \(s<t\),
\[
\|x(t)-x(s)\|
\le
L\sum_{\tau\in D_x\cap[s,t)}m_\tau^\alpha
\le
L\left(\sum_{\tau\in D_x\cap[s,t)}m_\tau\right)^\alpha
\le
L\mu([s,t))^\alpha.
\]
Thus the same trajectory satisfies the required super-\(\widetilde g\)-H\"older bound. Proposition~\ref{prop:holder-implies-gac} then gives \(\widetilde g\)-absolute continuity, completing the converse.
\end{proof}

\begin{corollary}[Uniqueness criterion for the Stieltjes clock]
\label{cor:complete-inclusion-recovery}
The compatible Stieltjes clock is uniquely determined up to an additive constant if and only if both
\[
Z_{\mathcal F}(x)\cap([a,b)\setminus D_x)=\varnothing
\]
and, for every \(\tau\in D_x\),
\[
\operatorname{card}\!\left(\mathcal M_{\mathcal F}^{\alpha,L}\bigl(\tau,x(\tau);\Delta^+x(\tau)\bigr)\right)=1
\]
hold.
\end{corollary}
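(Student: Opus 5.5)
The plan is to read off both directions of the equivalence from the parametrization in Theorem~\ref{thm:compatible-inclusion-clocks}. By that theorem, every compatible measure is of the form \(\mu=\sum_{\tau\in D_x}m_\tau\delta_\tau+\nu\). Here each \(m_\tau\) ranges independently over \(\mathcal M_{\mathcal F}^{\alpha,L}(\tau,x(\tau);\Delta^+x(\tau))\), subject only to summability, and \(\nu\) ranges over finite nonnegative measures concentrated on \(Z_{\mathcal F}(x)\setminus D_x\). The clock is determined by \(\mu\) up to the additive constant \(C\), because \(g_\mu(t)=C+\mu([a,t))\) and conversely \(\mu([s,t))=g(t)-g(s)\). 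Uniqueness of the clock up to a constant is therefore the same as uniqueness of \(\mu\). Since the problem presupposes an observed solution, at least one compatible measure exists, namely the true \(\mu_g\). Each set \(\mathcal M_{\mathcal F}^{\alpha,L}\) is then nonempty, because it contains the true \(m_\tau\) by Theorem~\ref{thm:inclusion-atomic} and (8.8).

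\emph{Sufficiency.} Suppose \(Z_{\mathcal F}(x)\cap([a,b)\setminus D_x)=\varnothing\). Since every measure in play is carried by \([a,b)\), any \(\nu\) concentrated on \(Z_{\mathcal F}(x)\setminus D_x\) must vanish. If in addition each filtered set is a singleton, every \(m_\tau\) is forced. Hence \(\mu\) is unique, and so is the clock modulo constants.

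\emph{Necessity.} I argue by contraposition in two cases, each time starting from the true data \((m_\tau)_{\tau\in D_x}\) and \(\nu_0\).
\begin{itemize}
\item If some \(t_0\in Z_{\mathcal F}(x)\cap([a,b)\setminus D_x)\) exists, replace \(\nu_0\) by \(\nu_0+r\delta_{t_0}\) with \(r>0\). This measure is still finite, nonnegative and concentrated on \(Z_{\mathcal F}(x)\setminus D_x\). By the converse half of Theorem~\ref{thm:compatible-inclusion-clocks} it gives a compatible measure different from \(\mu_g\).
\item If instead some \(\tau_0\in D_x\) has a filtered set with at least two elements, replace \(m_{\tau_0}\) by another admissible value \(m'\ne m_{\tau_0}\) and keep everything else fixed. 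Summability is unaffected, since only one term changes. The converse half of the theorem again gives a second compatible measure, which differs at the atom \(\tau_0\).
\end{itemize}

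The main point needing care is that changing a single mass in this way really stays within the admissible class. The H\"older estimate used in the converse of Theorem~\ref{thm:compatible-inclusion-clocks} depends only on the atomwise bound \(\|\Delta^+x(\tau)\|\le Lm_\tau^\alpha\) and on superadditivity of \(u\mapsto u^\alpha\). Both still hold after the modification, so no interaction between different atoms arises and the perturbation is legitimate.
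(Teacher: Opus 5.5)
Your proposal is correct and follows essentially the paper's route: both directions are read off from the parametrization in Theorem~\ref{thm:compatible-inclusion-clocks}, with uniqueness of the clock up to an additive constant identified with uniqueness of the measure on \([a,b)\). You make explicit what the paper's short proof leaves implicit: each filtered mass set is nonempty because it contains the true mass, and nonuniqueness is witnessed by two concrete perturbations, namely adding \(r\delta_{t_0}\) at a point of \(Z_{\mathcal F}(x)\cap([a,b)\setminus D_x)\), or replacing one admissible mass by another.
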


\begin{proof}
Theorem~\ref{thm:compatible-inclusion-clocks} shows that uniqueness requires and is implied by two independent facts: there is exactly one admissible mass at each observed jump, and there is no location outside the observed jump set at which additional Stieltjes mass can be placed without changing the state. These are precisely the two stated conditions. The remaining additive constant does not affect the dynamics, which depend only on increments of the clock.
\end{proof}

\section{Discussion and conclusions}

H\"older regularity above exponent one has a different structure when increments are measured by a Stieltjes clock. For an ordinary continuous clock, exponent \(\alpha>1\) forces constancy. For a left-continuous nondecreasing Stieltjes clock, the nonatomic part still cannot support nonconstant variation, but jumps can. Every finite-dimensional super-\(g\)-H\"older function therefore admits a pure-jump representation, and its \(g\)-H\"older seminorm is determined exactly by its normalized jump amplitudes. After fixing the initial value, this identifies the function space linearly and isometrically with \(\ell^\infty(D_g;\R^d)\); without fixing the initial value, the corresponding space is isometric to \(\R^d\oplus_1\ell^\infty(D_g;\R^d)\). The Banach structure, the criterion that separability is equivalent to finiteness of \(D_g\), the exact variation formula, and the finite-jump approximation results all follow from this atomic representation.

Uniform approximation by finite-jump functions is available for every super-\(g\)-H\"older function, whereas approximation in the natural super-\(g\)-H\"older norm occurs exactly for the subspace \(c_0(D_g;\R^d)\) of the sequence space. Moreover, increasing the H\"older exponent from \(\alpha\) to \(\beta>\alpha\) produces a compact embedding whose finite-rank truncation error is determined exactly by the ordered clock jump sizes.

For Stieltjes differential equations, the same atomic structure restricts the possible state changes. If the vector field at nonzero state jumps is uniformly bounded away from zero, each state change requires a positive minimum clock increment, so only finitely many such jumps can occur under a finite total Stieltjes mass. In the affine model, the minimal admissible jump sizes satisfy an explicit recurrence. Exactly \(N\) nonzero state jumps are possible if and only if \(M\ge S_N\), so the threshold sequence \((S_N)\) determines the maximum number of jumps.

For a fixed feasible number of jumps, the terminal distance has exact minimum and maximum values. The minimum keeps the early jump sizes at their smallest admissible values and assigns the remaining mass to the final jump. At the maximum, an initial block has equal jump sizes and the remaining sizes are determined recursively by saturated constraints. Optimizing over the feasible values of \(N\) gives the global extrema. For the same total clock increment \(M\), every response corresponding to admissible clock-jump sizes remains strictly below the corresponding continuous-clock response \(R_0e^{\sigma M}\).

For known single-valued dynamics, the inverse problem admits a complete description. Observations of the individual state jumps determine the corresponding atomic part of the Stieltjes clock. If \(Z_x=\{t\in I:F(t,x(t))=0\}\) and \(\mu_{\rm obs}\) denotes the atomic measure reconstructed from the observed nonzero state jumps, then every compatible Stieltjes measure has the form
\[
\mu_g=\mu_{\rm obs}+\nu,
\]
where \(\nu\) is an arbitrary finite nonnegative measure concentrated on \(Z_x\cap[a,b)\). Hence the unresolved part of the clock is characterized exactly by the zero set \(Z_x\) of the vector field along the trajectory. In particular, the Stieltjes clock is uniquely determined up to an additive constant if and only if \(Z_x\cap[a,b)=\varnothing\). This characterization also shows why observations of the individual state jumps contain substantially more information than terminal observations, which may correspond to different numbers of jumps and different total Stieltjes masses.

For nonconvex differential inclusions, the same observed state jump may be compatible with more than one clock jump. The super-\(g\)-H\"older condition reduces these possibilities and can restore uniqueness in some cases. Additional clock variation may remain invisible where zero velocity is allowed. Thus recovery of the Stieltjes clock depends both on the ambiguity at the observed jumps and on whether unobserved clock variation can occur elsewhere. Convexification may enlarge the set of compatible clock jumps and can therefore reduce identifiability.

For \(\alpha>1\), the jump structure of the Stieltjes clock provides a common description of the function space, the dynamics, and the recovery of the clock from observations. This common atomic structure is the main link between the regularity, extremal, and inverse results of the paper.

\end{document}